\title{\bf{Generating functions of stable pair invariants 
via wall-crossings in derived categories}}
\date{}
\author{Yukinobu Toda}
\DeclareFontFamily{U}{rsfs}{%
\skewchar\font127}
\DeclareFontShape{U}{rsfs}{m}{n}{%
<-6>rsfs5<6-8.5>rsfs7<8.5->rsfs10}{}
\DeclareSymbolFont{rsfs}{U}{rsfs}{m}{n}
\DeclareRobustCommand*\rsfs{%
\@fontswitch\relax\mathrsfs}
\theoremstyle{plain}
\newtheorem{thm}{Theorem}[section]
\newtheorem{prop}[thm]{Proposition}
\newtheorem{lem}[thm]{Lemma}
\newtheorem{defi}[thm]{Definition}
\newtheorem{rmk}[thm]{Remark}
\newtheorem{cor}[thm]{Corollary}
\newtheorem{prop-defi}[thm]{Proposition-Definition}
\newtheorem{lem-defi}[thm]{Lemma-Definition}
\newtheorem{problem}[thm]{Problem}
\newtheorem{assum}[thm]{Assumption}
\newtheorem{conj}[thm]{Conjecture}
\newtheorem{exam}[thm]{Example}
\newcommand{\aA}{\mathcal{A}}
\newcommand{\bB}{\mathcal{B}}
\newcommand{\cC}{\mathcal{C}}
\newcommand{\dD}{\mathcal{D}}
\newcommand{\eE}{\mathcal{E}}
\newcommand{\gG}{\mathcal{G}}
\newcommand{\hH}{\mathcal{H}}
\newcommand{\lL}{\mathcal{L}}
\newcommand{\nN}{\mathcal{N}}
\newcommand{\oO}{\mathcal{O}}
\newcommand{\xX}{\mathcal{X}}
\newcommand{\yY}{\mathcal{Y}}
\newcommand{\lr}{\longrightarrow}
\newcommand{\Supp}{\mathop{\rm Supp}\nolimits}
\newcommand{\Hom}{\mathop{\rm Hom}\nolimits}
\newcommand{\dR}{\mathbf{R}}
\newcommand{\Var}{\mathop{\rm Var}\nolimits}
\newcommand{\Pic}{\mathop{\rm Pic}\nolimits}
\newcommand{\ch}{\mathop{\rm ch}\nolimits}
\newcommand{\rk}{\mathop{\rm rk}\nolimits}
\newcommand{\td}{\mathop{\rm td}\nolimits}
\newcommand{\Ext}{\mathop{\rm Ext}\nolimits}
\newcommand{\Spec}{\mathop{\rm Spec}\nolimits}
\newcommand{\Coh}{\mathop{\rm Coh}\nolimits}
\newcommand{\im}{\mathop{\rm im}\nolimits}
\newcommand{\cneq}{\mathrel{\raise.095ex\hbox{:}\mkern-4.2mu=}}
\newcommand{\eqcn}{\mathrel{=\mkern-4.5mu\raise.095ex\hbox{:}}}
\newcommand{\Cok}{\mathop{\rm Coker}\nolimits}
\newcommand{\Aut}{\mathop{\rm Aut}\nolimits}
\newcommand{\Stab}{\mathop{\rm Stab}\nolimits}
\newcommand{\Sch}{\mathop{\rm Sch}\nolimits}
\newcommand{\modu}{\mathop{\rm mod}\nolimits}
\newcommand{\Imm}{\mathop{\rm Im}\nolimits}
\newcommand{\Ker}{\mathop{\rm Ker}\nolimits}
\newcommand{\Ree}{\mathop{\rm Re}\nolimits}
\newcommand{\St}{\mathop{\rm St}\nolimits}
\begin{document}
\maketitle
\begin{abstract}
The notion of limit stability on Calabi-Yau 3-folds
is
introduced by the author to 
construct an approximation of Bridgeland-Douglas stability conditions 
at the large volume limit. 
It has also turned out 
that the wall-crossing phenomena of limit stable objects seem relevant to the rationality 
conjecture of the generating functions of
Pandharipande-Thomas invariants. 
In this article, we shall make it clear
 how wall-crossing formula of the 
 counting invariants of limit stable objects solves the above
 conjecture. 
\end{abstract}

\section{Introduction}
A theory of curve counting on Calabi-Yau 3-folds 
is interesting in both algebraic geometry and 
string theory. Now there are three such theories, called 
Gromov-Witten (GW) theory, Donaldson-Thomas (DT) theory, and 
Pandharipande-Thomas (PT) theory.
Conjecturally these theories are equivalent in terms of generating 
functions, however we also need a 
conjectural
rationality property of 
those functions for DT-theory and PT-theory, to
formulate that equivalence. The purpose of this article is to 
interpret the rationality conjecture for PT-theory from the 
viewpoint of wall-crossing phenomena
 in derived categories of coherent sheaves. 

\subsection{GW-DT-PT correspondences}
First of all, let us recall the conjectural GW-DT-PT correspondences 
on curve counting theories. 
Suppose that $X$ is a smooth projective Calabi-Yau 3-fold over $\mathbb{C}$, 
i.e. there is a nowhere vanishing holomorphic 3-form on $X$.
For $g \ge 0$ and $\beta \in H_2(X, \mathbb{Z})$, the 
\textit{GW-invariant} $N_{g, \beta}$ is defined by 
the integration of the virtual class, 
$$N_{g, \beta}=\int_{[\overline{M}_{g}(X, \beta)]^{\rm{vir}}}
1 \in \mathbb{Q},$$
where $\overline{M}_{g}(X, \beta)$ is the moduli 
stack of stable maps $f\colon C\to X$ with $g(C)=g$ and $f_{\ast}[C]=\beta$.
The \textit{GW-potential} is given by the following generating function, 
$$Z_{\rm{GW}}=\exp\left(\sum_{g, \beta \neq 0}
N_{g, \beta}\lambda^{2g-2}v^{\beta}\right).$$
 For $n\in \mathbb{Z}$ and 
$\beta \in H_2(X, \mathbb{Z})$,
let $I_{n}(X, \beta)$ be 
the Hilbert scheme of 1-dimensional subschemes 
$Z\subset X$ satisfying 
$$[Z]=\beta, \quad \chi(\oO_Z)=n.$$
The obstruction theory on $I_n(X, \beta)$ is obtained by 
viewing it as a moduli space of ideal sheaves, and
 the \textit{DT-invariant} $I_{n, \beta}$ 
is defined by 
$$I_{n, \beta}=\int_{[I_{n}(X, \beta)]^{\rm{vir}}}1 \in \mathbb{Z}.$$  
The generating function 
of the \textit{reduced DT-theory} is 
$$Z_{\rm{DT}}'=\sum_{n, \beta}
I_{n, \beta}q^n v^{\beta}/\sum_{n}I_{n, 0}q^n.$$
The theory of stable pairs and their counting invariants are introduced
and studied
by Pandharipande and Thomas~\cite{PT}, \cite{PT2}, \cite{PT3}
 to give a geometric interpretation 
of the reduced 
DT-theory. 
By definition, a \textit{stable pair} 
is data $(F, s)$, 
$$s\colon \oO_X \lr F, $$
where $F$ is a pure one dimensional sheaf on $X$, and 
$s$ is a morphism with a zero dimensional cokernel. 
For $\beta \in H_2(X, \mathbb{Z})$ and $n\in \mathbb{Z}$, the
 moduli 
space of stable pairs $(F, s)$ with 
$$[F]=\beta, \quad \chi(F)=n, $$
 is constructed in~\cite{PT}, denoted by 
$P_{n}(X, \beta)$. 
The obstruction theory on $P_n(X, \beta)$ is obtained by viewing 
stable pairs $(F, s)$ as two term complexes, 
\begin{align}\label{twoterm}
\cdots \lr 0 \lr \oO_X \stackrel{s}{\lr} F \lr 0 \lr \cdots.
\end{align}
The \textit{PT-invariant} $P_{n, \beta}$ is defined by  
$$P_{n, \beta}=\int_{[P_n(X, \beta)]^{\rm{vir}}}1 \in \mathbb{Z}.$$
The corresponding generating function is 
$$Z_{\rm{PT}}=\sum_{n, \beta}
P_{n, \beta}q^v v^{\beta}.$$
The functions $Z_{\rm{GW}}$, $Z_{\rm{DT}}'$ and 
$Z_{\rm{PT}}$ are conjecturally equal after suitable
variable change. In order to state this,
we need the following conjecture, called \textit{rationality conjecture}. 
\begin{conj}\label{rat}
\emph{\bf{\cite[Conjecture~2]{MNOP}, \cite[Conjecture~3.2]{PT}}}
For a fixed $\beta$, the generating series 
$$I_{\beta}(q)=
\sum_{n\in \mathbb{Z}}I_{n, \beta}q^n/\sum_{n\in \mathbb{Z}}I_{n, 0}q^n, 
\quad P_{\beta}(q)=\sum_{n\in \mathbb{Z}}P_{n, \beta}q^n,$$
are Laurent expansions of rational functions of $q$, 
invariant under $q\leftrightarrow 1/q$. 
\end{conj}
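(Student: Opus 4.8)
The plan is to interpolate between the stable-pair moduli spaces and a reference family whose generating function is manifestly rational, using a one-parameter family of stability conditions on $D^b(\Coh X)$, and then to transport rationality along a finite chain of wall-crossings. First I would pass from a stable pair $(F,s)$ to the associated two-term complex $I^{\bullet}=(\oO_X\stackrel{s}{\lr}F)$, regarded as an object of class $-\ch(\oO_X)+\ch(F)$ in a tilted heart $\aA\subset D^b(\Coh X)$ obtained from the torsion pair (sheaves of dimension $\le 1$)$\,\subset\Coh X$. On the subcategory of $\aA$ spanned by objects of rank $0$ or $-1$ I would put a family of limit (polynomial) stability conditions $Z_t$, $t\in\mathbb{R}_{>0}$, arranged so that for $t\gg 0$ the $Z_t$-semistable objects of class $(-1,\beta,n)$ are precisely the PT stable pairs, whereas for $0<t\ll 1$ the description degenerates to an explicitly computable regime --- morally the one in which $F$ is an iterated extension of $D0$--$D2$ objects and the section plays no role --- whose generating function is a known rational function (and, after dividing by the degree-zero part, is related to $I_\beta(q)$).

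Second, I would make the counts into well-defined deformation-invariant numbers. For each $t$ and each class $v=(-1,\beta,n)$, define $P_{n,\beta}(t)$ as the Behrend-weighted ``integral'' over the moduli stack $\mM_t(v)$ of $Z_t$-semistable objects, via the motivic Hall algebra of $\aA$ and Joyce's no-pole integration map, so that strictly semistable objects are handled. This requires checking (i) \emph{boundedness}: for fixed $v$ the $Z_t$-semistable objects form a stack of finite type, which follows from boundedness of $1$-dimensional sheaves $F$ of class $\beta$ together with an a priori bound on the length of $\Cok(s)$; (ii) that $\mM_t(v)$ carries a symmetric obstruction theory, so the Behrend function delivers the right virtual count and the count is constant in $t$ within a chamber; and (iii) the matching $P_{n,\beta}(t)=P_{n,\beta}$ for $t\gg 0$.

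Third and central, I would establish a wall-and-chamber structure --- for fixed $\beta$ the ray $\mathbb{R}_{>0}$ has only finitely many walls, and $P_{n,\beta}(t)$ is locally constant away from them --- and apply the Joyce--Song / Kontsevich--Soibelman wall-crossing formula at each wall. At a wall $t_0$, a semistable complex $I^{\bullet}$ acquires a sub- or quotient object which is a $Z_{t_0}$-semistable sheaf $F'$ supported in dimension $\le 1$; the jump in $\sum_n P_{n,\beta}(t)q^n$ across $t_0$ is a universal expression in the $D0$--$D2$ counting invariants of such $F'$ and the PT-type invariants on the other side, with exponents governed by the Euler pairing $\chi(I^{\bullet},F')$. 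Since that pairing and the relevant moduli are bounded, each jump multiplies the generating series by a rational function of $q$ --- typically a finite product of factors of the form $\bigl(1-(-q)^m\bigr)^{m\,n_{m\beta'}}$. Composing the finitely many jumps from the $t\ll 0$ chamber to the $t\gg 0$ chamber, and using rationality in the $t\ll 0$ chamber, shows that $P_\beta(q)$ is the Laurent expansion of a rational function; an entirely parallel chain connecting the Hilbert-scheme series $I_\beta(q)$ to the same reference chamber yields its rationality and, simultaneously, the DT/PT equivalence.

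Finally, for the symmetry $q\leftrightarrow 1/q$ I would use the derived dualizing anti-autoequivalence $\mathbb{D}=\dR\HOM(-,\oO_X)[1]$: it fixes $\beta$, sends $n\mapsto -n$, carries $\aA$ to the analogous heart with the family $Z_t$ reversed ($t\mapsto 1/t$), and --- crucially, as $X$ is a Calabi--Yau $3$-fold --- preserves the Behrend function, so it identifies the relevant moduli stacks together with their virtual counts. Tracking $\mathbb{D}$ through the same wall-crossing chain produces the functional equation. The hardest part will be twofold: establishing the wall-crossing formula rigorously --- this needs the full motivic Hall algebra machinery together with the multiplicativity identities for the Behrend function over exact triangles, which is the main technical input and is only available under additional hypotheses at present --- and proving the uniform boundedness and finiteness of walls in $\beta$, independent of $n$, that legitimizes passing to generating functions. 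By comparison, the identification of PT stable pairs with $Z_t$-semistable complexes, the $t\ll 0$ computation, and the Calabi--Yau duality are essentially formal.
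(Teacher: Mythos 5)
You should first note that the statement you are proving is stated in the paper as a \emph{conjecture}, and the paper does not prove it: the author says explicitly that Joyce's machinery applies only to motivic (Euler-characteristic) invariants, proves the rationality and $q\leftrightarrow 1/q$ symmetry only for the unweighted series $P^{eu}_{\beta}(q)=\sum_n e(P_n(X,\beta))q^n$ (Theorem~\ref{thm:main2} and Corollary~\ref{cor:main}), and reduces the actual Conjecture~\ref{rat} for PT-theory to the open Problem~\ref{prob}. Your proposal follows essentially the same program as the paper --- the tilted heart $\aA^p$, a one-parameter family of limit-type stability conditions, Joyce's wall-crossing formula, and the dualizing functor for the functional equation --- and you even flag the decisive gap yourself (``only available under additional hypotheses at present''). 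That gap is precisely why this remains a conjecture here: there is no integration map from the Ringel--Hall Lie algebra $\mathfrak{G}(\aA^p)$ to $\mathfrak{g}(\aA^p)$ that is weighted by Behrend's function, handles strictly semistable objects, and is a Lie algebra homomorphism up to the sign $(-1)^{\chi(v_1,v_2)}$ (equation (\ref{prob:nu})). Joyce's map $\Theta$ of Theorem~\ref{Lie} is unweighted; your appeal to ``Joyce's no-pole integration map'' plus ``a symmetric obstruction theory on $\mathfrak{M}_t(v)$'' does not supply the missing compatibility, since symmetric obstruction theories are not known for these stacks of semistable objects and, even granting Behrend-weighted numbers pointwise, the wall-crossing identity for them is exactly what Problem~\ref{prob} asks for. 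So the proposal is not a proof; it is the same conditional argument the paper gives.

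Two further discrepancies with the paper's (unconditional, Euler-characteristic) argument are worth noting. First, your reference chamber is a ``$t\ll 1$ regime related to $I_\beta(q)$''; the paper instead uses $\mu_\sigma$-limit stability for $\sigma=k\omega+i\omega$ and takes the self-dual point $k=0$ as reference, where Lemma~\ref{lem:finite} shows the invariants $L^{eu}_{n,\beta}$ vanish for all but finitely many $n$ and the duality $\mathbb{D}=\dR\hH om(-,\oO_X[2])$ gives $L^{eu}_{n,\beta}=L^{eu}_{-n,\beta}$, so $L^{eu}_\beta(q)$ is a symmetric Laurent polynomial; the DT series $I_\beta(q)$ plays no role, and your proposed ``parallel chain'' for it is additional unproven work. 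Second, the paper's wall-crossing output is the additive-exponential identity $\sum_\beta P^{eu}_\beta(q)v^\beta=\bigl(\sum_\beta L^{eu}_\beta(q)v^\beta\bigr)\exp\bigl(\sum_\beta N^{eu}_\beta(q)v^\beta\bigr)$, with rationality and symmetry of $N^{eu}_\beta(q)=\sum_{n\ge 0}nN^{eu}_{n,\beta}q^n$ proved by an explicit resummation using the periodicity $N^{eu}_{n+d,\beta}=N^{eu}_{n,\beta}$ (tensoring by an ample line bundle) and $N^{eu}_{n,\beta}=N^{eu}_{-n,\beta}$; your multiplicative factors $\bigl(1-(-q)^m\bigr)^{m n_{m\beta'}}$ presuppose a BPS-type integrality that is neither proved nor needed in the paper.
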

The above conjecture is solved 
for $I_{\beta}(q)$ when $X$ is a 
toric local Calabi-Yau 3-fold~\cite{MNOP}, and for $P_{\beta}(q)$
when $\beta$ is an irreducible curve class~\cite{PT3}. 
Now we can state the conjectural GW-DT-PT-correspondences. 
\begin{conj}\emph{\bf{\cite[Conjecture~3]{MNOP}, \cite[Conjecture~3.3]{PT}}}
After the variable change $q=-e^{i\lambda}$, we have 
$$Z_{\rm{GW}}=Z_{\rm{DT}}'=Z_{\rm{PT}}.$$
The variable change $q=-e^{i\lambda}$ is well-defined by 
Conjecture~\ref{rat}.
\end{conj}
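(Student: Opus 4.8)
The plan is to reduce the asserted three-term identity to two independent statements, the Gromov--Witten/Donaldson--Thomas correspondence $Z_{\rm GW}=Z_{\rm DT}'$ and the Donaldson--Thomas/Pandharipande--Thomas correspondence $Z_{\rm DT}'=Z_{\rm PT}$, and to establish the second one together with Conjecture~\ref{rat} (which is exactly what makes the substitution $q=-e^{i\lambda}$ meaningful) by wall-crossing in $D^b(\Coh X)$. The first equality I would take as input from \cite{MNOP}: it relates a holomorphic-curve theory to a sheaf-theoretic one and lies outside the reach of derived-category techniques, so it is the genuine obstacle to a fully self-contained proof of the displayed equality.

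For the Donaldson--Thomas/Pandharipande--Thomas half, the first step is to place ideal sheaves and stable pairs in one abelian category. An ideal sheaf $I_Z$ of a one-dimensional $Z$ sits in $0\to I_Z\to\oO_X\to\oO_Z\to 0$, and a stable pair sits in the two-term complex (\ref{twoterm}); up to a shift both become objects of the extension-closure $\aA$ of $\oO_X[1]$ and the sheaves supported in dimension $\le 1$ inside $D^b(\Coh X)$, which is a tilt of $\Coh X$ and within which the author's limit stability is formulated. Inside $\aA$ the Hilbert scheme $I_n(X,\beta)$ and the stable pair space $P_n(X,\beta)$ are recovered as moduli of objects of a fixed numerical class that are semistable for two different one-parameter families of (limit) stability functions; for fixed $\beta$ and bounded $n$ the two regions are separated by finitely many real codimension-one walls.

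The second step is the wall-crossing formula. On each chamber I would define Behrend-function-weighted counting invariants of the semistable objects, work in the Hall algebra of $\aA$ in the style of Joyce--Song and Kontsevich--Soibelman, and verify the foundational properties that the machinery requires: boundedness and openness of the relevant moduli stacks, the symmetric obstruction theory inherited from viewing the objects as two-term complexes, and deformation-invariance of the resulting numbers. Transporting the generating function, as an element of the Hall algebra, across each wall from the $P$-chamber to the $I$-chamber produces correction factors built only out of the Joyce-type invariants of Gieseker-semistable pure one-dimensional sheaves $F$ with $[F]=\beta$; summing the geometric-series contributions over the finitely many walls yields an identity, schematically
\[
\sum_{n,\beta}I_{n,\beta}q^n v^\beta \;=\; \Bigl(\textstyle\sum_{n}I_{n,0}q^n\Bigr)\cdot Z_{\rm PT},
\]
the degree-zero factor being precisely the contribution of the zero-dimensional sheaves that occur in the $I$-chamber but not the $P$-chamber. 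Dividing by $\sum_n I_{n,0}q^n$ gives $Z_{\rm DT}'=Z_{\rm PT}$.

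The third step is rationality and the $q\leftrightarrow 1/q$ symmetry, which make the variable change legitimate. The wall-crossing factors above are finite products of expressions of the form $(1-(-q)^m)^{c_m}$, hence rational in $q$, which already yields that $P_\beta(q)$ is the Laurent expansion of a rational function. The functional equation $P_\beta(q)=P_\beta(1/q)$ I would deduce from a self-duality of the limit-stability picture coming from $X$ being Calabi--Yau, concretely the functor $\dR\HOM(-,\oO_X)[2]$, which sends a pure one-dimensional sheaf $F$ with $[F]=\beta$, $\chi(F)=n$ to another such sheaf with $[F]=\beta$, $\chi=-n$, and therefore induces an involution on the moduli exchanging the two ends of the $q$-line; matching this involution against the wall-crossing output gives Conjecture~\ref{rat} for $P_\beta(q)$ and hence the well-definedness claimed in the statement. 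The two places I expect real difficulty are the foundational input — making limit stability behave well enough (boundedness of walls, openness, existence of reasonable moduli stacks) for the Hall-algebra wall-crossing to be valid — and, for the full three-way equality, the Gromov--Witten comparison $Z_{\rm GW}=Z_{\rm DT}'$, which these methods do not address.
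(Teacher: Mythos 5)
The statement you are asked about is a \emph{conjecture}, and the paper gives no proof of it: it is quoted from \cite{MNOP} and \cite{PT}, and the author states explicitly that he is unable to solve even the rationality part (Conjecture~\ref{rat}) that makes the variable change $q=-e^{i\lambda}$ well defined. What the paper actually proves is only the motivic analogue: the rationality and $q\leftrightarrow 1/q$ symmetry of $P^{eu}_\beta(q)=\sum_n e(P_n(X,\beta))q^n$ (Theorem~\ref{thm:main2} and Corollary~\ref{cor:main}), obtained from Joyce's wall-crossing for $\mu_\sigma$-limit stability on $\aA^p$. So a ``proof'' of the statement cannot be compared to the paper's proof, because there is none; your proposal should be judged as an independent attempt, and as such it has a genuine gap exactly at the point the paper isolates as the open problem.

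Concretely: your second step quietly assumes that Behrend-function-weighted counting invariants of semistable objects transform under wall-crossing by the same Hall-algebra formulas as the unweighted ones. Joyce's integration map $\Theta$ (Theorem~\ref{Lie}) is only known to be a Lie algebra homomorphism from $\mathfrak{G}(\aA^p)$ to $\mathfrak{g}(\aA^p)$ for \emph{motivic} stack functions; the compatibility of the Behrend constructible function with the Ringel-Hall Lie bracket is precisely Problem~\ref{prob}, which the paper states it cannot solve and whose affirmative answer is shown (in the last theorem of Section~4) to be equivalent to what you need. Your phrase ``verify \dots the symmetric obstruction theory \dots and deformation-invariance of the resulting numbers'' is where the entire difficulty is hidden, and none of it is verified. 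Two smaller points: the wall-crossing output in this paper is not a finite product of factors $(1-(-q)^m)^{c_m}$ but the exponential formula (\ref{expan0}), with rationality of $N^{eu}_\beta(q)$ coming from the periodicity $N^{eu}_{n+d,\beta}=N^{eu}_{n,\beta}$ and the duality $N^{eu}_{n,\beta}=N^{eu}_{-n,\beta}$ (Lemmas~\ref{lem:sym} and~\ref{lem:N}), not from a product expansion; and the identity you would obtain by these methods is a decomposition of $Z_{\rm{PT}}$ against one-dimensional sheaf invariants, not directly the DT/PT identity $Z'_{\rm{DT}}=Z_{\rm{PT}}$, which requires a separate wall-crossing (relating ideal sheaves to stable pairs) that this paper does not carry out. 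The reduction of $Z_{\rm{GW}}=Z'_{\rm{DT}}$ to \cite{MNOP} is of course only a restatement of that conjecture, which is acceptable here since the statement is itself a conjecture, but it should be flagged as such rather than as ``input.''
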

Note that ideal sheaves $I\subset \oO_X$ 
are objects in $D^b(X)$, where $D^b(X)$ is the bounded 
derived category of coherent sheaves on $X$.  
We can also interpret stable pairs 
 $(F, s)$ as objects 
in $D^b(X)$ by viewing them as two term complexes (\ref{twoterm}). 
As discussed in~\cite[Secction~3]{PT}, 
the equality $Z_{\rm{DT}}'=Z_{\rm{PT}}$ should be interpreted 
as a wall-crossing formula for counting invariants in the 
category $D^b(X)$. 
The purpose of this article is to show that 
Conjecture~\ref{rat}
is also interpreted as a wall-crossing formula in $D^b(X)$, 
 using the method of 
limit stability~\cite{Tolim} together with Joyce's works~\cite{Joy1}, 
\cite{Joy2}, \cite{Joy3}, \cite{Joy4}. 

 \subsection{Limit stability}
The notion of limit stability on a Calabi-Yau 3-fold $X$ is 
introduced in~\cite{Tolim} 
to 
construct an approximation of Bridgeland-Douglas stability 
conditions~\cite{Brs1}, \cite{Dou1}, \cite{Dou2}
 on $D^b(X)$ at the large volume limit. 
It is a certain stability condition on 
the category of perverse coherent sheaves 
$$\aA^p \subset D^b(X), $$
in the sense of Bezrukavnikov~\cite{Bez} and Kashiwara~\cite{Kashi}. 
(See Definition~\ref{defiA}.)
An element $\sigma \in A(X)_{\mathbb{C}}$
determines $\sigma$-limit (semi)stable objects
in $\aA^p$, where 
$A(X)_{\mathbb{C}}$ is the complexified ample cone, 
$$A(X)_{\mathbb{C}}=\{ B+i\omega \in H^2(X, \mathbb{C}) \mid 
\omega \mbox{ is an ample class} \}.$$
It has also turned out in~\cite{Tolim}
that the objects (\ref{twoterm}) appear as 
$\sigma$-limit stable objects for some $\sigma \in A(X)_{\mathbb{C}}$, 
thus 
studying stable pairs and 
limit stable objects are closely related. 
The objects $E$ given by (\ref{twoterm}) satisfy 
\begin{align}\label{termcon}
(\ch_0(E), \ch_1(E), \ch_2(E), \ch_3(E))
=(-1, 0, \beta, n), \quad \det E=\oO_X, 
\end{align}
for some $\beta$ and $n$. 
Under the above observation, 
 we have constructed in~\cite{Tolim} the
 moduli space of $\sigma$-limit 
stable objects $E\in \aA^p$ satisfying (\ref{termcon}) 
 as an algebraic space of finite type,
 denoted by $\lL_n^{\sigma}(X, \beta)$.
Using that moduli space, 
the counting invariant of limit stable objects 
\begin{align}\label{Linv}
L_{n, \beta}(\sigma)\in \mathbb{Z}
\end{align} is also
defined in~\cite{Tolim} 
as a weighted Euler characteristic with respect to 
Behrend's constructible function~\cite{Beh},
and (\ref{Linv})
coincides with the integration of the virtual class
if $\lL_{n}^{\sigma}(X, \beta)$ is a projective variety. 
A particular choice of $\sigma$ yields an equality 
 $L_{n, \beta}(\sigma)=P_{n, \beta}$, however
 $L_{n, \beta}(\sigma)$ becomes different from $P_{n, \beta}$ if 
 we deform $\sigma$. 
As discussed in~\cite[Section~4]{Tolim}, 
a transformation formula of the invariants $L_{n, \beta}(\sigma)$
under change of $\sigma$ seems relevant to solving Conjecture~\ref{rat}
for PT-theory. 
\subsection{Main result}
In this article, we shall proceed the above idea  
further, using D.~Joyce's 
 works~\cite{Joy1}, \cite{Joy2}, \cite{Joy3}, 
\cite{Joy4} on counting invariants of 
semistable objects on abelian categories and their wall-crossing 
formulas. We will 
make it clear how such a formula for counting invariants of 
objects in $\aA^p$ 
implies Conjecture~\ref{rat} for PT-theory. 
Unfortunately we are unable to solve Conjecture~\ref{rat} 
at this moment, 
as Joyce's 
theory is applied only for the motivic invariants (e.g. Euler characteristic) 
of the moduli spaces, 
 so they do not involve
virtual classes.   
On the other hand, the invariant $P_{n, \beta}$  coincides with
the Euler characteristic of $P_{n}(X, \beta)$ (up to sign), 
$$P_{n, \beta}^{eu}\cneq e(P_{n}(X, \beta))\in \mathbb{Z}, $$
if $P_{n}(X, \beta)$
is non-singular. In general $P_{n, \beta}$ is written as 
a weighted Euler characteristic with respect to Behrend's constructible 
function~\cite{Beh}, so
 $P_{n, \beta}$ resembles $P_{n, \beta}^{eu}$ in this sense.  
So instead of solving Conjecture~\ref{rat}, 
we shall show the motivic version of Conjecture~\ref{rat}, i.e. 
the rationality of the generating series, 
$$P_{\beta}^{eu}(q)=
\sum_{n\in \mathbb{Z}}P_{n, \beta}^{eu}q^n.$$
The limit stability does not work well to combine 
Joyce's works, so we will introduce the notion of 
\textit{$\mu_{\sigma}$-limit stability} for $\sigma \in A(X)_{\mathbb{C}}$,
 which is a coarse version of $\sigma$-limit stability. Then we will introduce 
the Joyce type invariants, (cf. Definition~\ref{def:inv},
Remark~\ref{rmk:invLP}, )
$$L_{n, \beta}^{eu} \in \mathbb{Q}, \quad N_{n, \beta}^{eu} \in \mathbb{Q}. $$
Roughly speaking, 
 $L_{n, \beta}^{eu}$, 
 (resp. $N_{n, \beta}^{eu}$) is the ``Euler characteristic''
 of the moduli stack of 
 $\mu_{i\omega}$-limit semistable objects $E\in \aA^p$
with $\det E=\oO_X$, (resp. one dimensional 
$\omega$-Gieseker semistable 
sheaves $F$, ) satisfying 
$$\ch(E)=(-1, 0, \beta, n), \quad (\mbox{resp. }\ch(F)=(0, 0, \beta, n).) $$
We will consider the generating series, 
$$L_{\beta}^{eu}(q)=\sum_{n\in \mathbb{Z}}L_{n, \beta}^{eu}q^n, \quad 
N_{\beta}^{eu}(q)=\sum_{n\ge 0}nN_{n, \beta}^{eu}q^n.$$
It will turn out that $L_{\beta}^{eu}(q)$ is a polynomial of 
$q^{\pm 1}$, $N_{\beta}^{eu}(q)$ is the Laurent expansion of 
a rational function of $q$, and they are invariant under 
$q\leftrightarrow 1/q$. 
(cf.~Lemma~\ref{cor:L}, Lemma~\ref{lem:N}.)
Somewhat surprisingly, Joyce's wall-crossing 
formula yields the following equality of 
those generating functions. 
\begin{thm}\emph{\bf{[Theorem~\ref{thm:main2}]}}
We have the following equality of the generating series,
 \begin{align}\label{expan0}
 \sum_{\beta}P_{\beta}^{eu}(q)v^{\beta}=
  \left( \sum_{\beta}L_{\beta}^{eu}(q) v^{\beta} \right) \cdot
 \exp \left(\sum_{\beta}N_{\beta}^{eu}(q)v^{\beta}\right). 
 \end{align}
 \end{thm}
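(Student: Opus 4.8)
The plan is to derive the product formula by expressing all three generating series in terms of Joyce's invariants attached to objects in the category $\aA^p$, and then applying Joyce's wall-crossing formula to pass between two stability conditions on $\aA^p$. First I would set up the relevant Hall algebra framework: Joyce associates to the abelian category $\aA^p$ (and to the subcategory of one-dimensional sheaves, which embeds into $\aA^p$) elements of a Lie algebra built from stack functions, and to each class $\gamma \in K(\aA^p)$ together with a weak stability condition an invariant counting semistable objects of class $\gamma$, with the key property that these invariants transform under change of stability condition via an explicit combinatorial sum over decompositions of $\gamma$. The classes of interest are $\gamma_{n,\beta}=(-1,0,\beta,n)$ with $\det=\oO_X$ for the pair-type objects, and $\alpha_{m,\beta'}=(0,0,\beta',m)$ for one-dimensional sheaves. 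I would verify that a stable pair $(F,s)$, viewed as the two-term complex \eqref{twoterm}, is $\mu_\sigma$-limit semistable for a suitable choice of $\sigma$ — essentially the limit chamber $\sigma = i\omega$ with $\omega$ taken to the large-volume limit — so that $P_{n,\beta}^{eu}$ equals the Joyce invariant of $\gamma_{n,\beta}$ at that stability condition, while $L_{n,\beta}^{eu}$ is the Joyce invariant of the same class at the $\mu_{i\omega}$-limit stability, and $N_{m,\beta'}^{eu}$ counts $\omega$-Gieseker semistable one-dimensional sheaves.

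Next I would identify, in the relevant region of $A(X)_{\mathbb{C}}$, the finitely many walls separating the chamber where $\mu_\sigma$-limit semistable objects of class $\gamma_{n,\beta}$ are exactly stable pairs from the chamber computing $L_{n,\beta}^{eu}$. Across each such wall an object $E$ of class $\gamma_{n,\beta}$ becomes strictly semistable, and its Jordan–Hölder factors must be one object of class $\gamma_{n',\beta''}$ of the same "type" (rank $-1$, $\det=\oO_X$) together with a collection of one-dimensional sheaves of classes $\alpha_{m_i,\beta'_i}$; this is forced by the structure of $\aA^p$ and the numerics, since the only way to destabilise a pair-type object while keeping the total Chern character fixed is to split off one-dimensional subsheaves or quotients. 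Feeding this combinatorial structure of the possible destabilising decompositions into Joyce's wall-crossing formula, the change in the invariant of $\gamma_{n,\beta}$ is a sum of iterated Lie brackets of the invariant of a class $\gamma_{n',\beta''}$ with invariants of one-dimensional classes. Taking the generating series over all $n$ and $\beta$ and organising the contributions, the iterated brackets with the one-dimensional part exponentiate, producing exactly the factor $\exp\left(\sum_\beta N_\beta^{eu}(q)v^\beta\right)$ multiplying $\sum_\beta L_\beta^{eu}(q)v^\beta$.

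A few technical points require care. One must work in the Lie algebra of stack functions with the "no-pole" property so that the invariants take values in $\mathbb{Q}$ rather than in a ring of constructible functions; this is where the Euler-characteristic (motivic) specialisation enters and where Joyce's Theorem~\ref{...}-type results on the existence of these invariants are invoked. One must also check that only finitely many terms contribute to each coefficient of $q^n v^\beta$: this follows from boundedness of the relevant moduli stacks, proved by the same estimates used in \cite{Tolim} to construct $\lL_n^\sigma(X,\beta)$ as a finite-type algebraic space, together with the fact that for fixed $\beta$ the Euler characteristics $\chi$ of the factors are bounded. Finally, the identification of the $\mu_\sigma$-limit semistable objects with stable pairs at the appropriate $\sigma$, and the compatibility of the $\det E=\oO_X$ condition with the Hall-algebra formalism (using a "fixed determinant" version of Joyce's Lie algebra, or equivalently rigidifying by the line-bundle factor), must be spelled out.

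The main obstacle I expect is the bookkeeping in the wall-crossing sum: Joyce's formula produces, for each wall and each decomposition type, a rational combinatorial coefficient times an iterated Lie bracket, and one must show that summing over all walls between the two chambers and over all decomposition types reorganises precisely into the exponential of the one-dimensional-sheaf generating series. This amounts to recognising the relevant sum as the Baker–Campbell–Hausdorff / exponential expansion in the pro-nilpotent Lie algebra, using that the one-dimensional classes $\alpha_{m,\beta'}$ span an abelian sub-Lie-algebra (brackets among one-dimensional sheaves of proportional support vanish after the relevant specialisation, or contribute only to lower-order terms already accounted for in $N_\beta^{eu}$), so that the only surviving structure is the adjoint action on the single pair-type factor. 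Getting the signs and the $n \leftrightarrow -n$ (i.e.\ $q \leftrightarrow 1/q$) symmetry to match across this reorganisation, and confirming that no contributions are lost when passing from the chamber-by-chamber wall-crossing to the global comparison, is the delicate heart of the argument.
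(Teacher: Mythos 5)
Your proposal follows essentially the same route as the paper: Joyce's wall-crossing for $\mu_\sigma$-limit stability on $\aA^p$ between the chamber in which the rank $-1$, $\det=\oO_X$ semistable objects are exactly stable pairs and the self-dual point $\sigma=i\omega$ computing $L^{eu}_{n,\beta}$, with the decomposition of each class into one rank $-1$ factor plus one-dimensional sheaf classes and the Riemann--Roch vanishing of the Euler pairing among the one-dimensional classes forcing the star-shaped bracket structure that exponentiates into the $N^{eu}_\beta(q)$ factor. The one imprecision is your description of the stable-pairs chamber as a large-volume limit in $\omega$: in the paper it is $\sigma=k\omega+i\omega$ with $\omega$ fixed and the $B$-field coefficient $k$ sufficiently negative, $k<-\mu_{n,\beta}/2$ (Theorem~\ref{prop:stack}), which is what lets the constraints $\mu_i\le -2k$ drop out of the wall-crossing sum.
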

As a corollary, we have the following. 
 \begin{cor}\emph{\bf{[Corollary~\ref{cor:main}]}}
 \label{cor:main}
 The generating series $P_{\beta}^{eu}(q)$ is the Laurent 
 expansion of a rational function of $q$, invariant under $q\leftrightarrow 
 1/q$. 
 \end{cor}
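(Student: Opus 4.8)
The plan is to deduce the corollary formally from the product formula (\ref{expan0}) of Theorem~\ref{thm:main2}, together with the structural results on the two auxiliary families of generating series.

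First I would extract the coefficient of $v^{\beta}$ from both sides of (\ref{expan0}). Writing the exponential on the right hand side as $\sum_{k\ge 0}\frac{1}{k!}\bigl(\sum_{\gamma}N_{\gamma}^{eu}(q)v^{\gamma}\bigr)^{k}$ and collecting terms according to the total class, one obtains
$$
P_{\beta}^{eu}(q)=\sum_{k\ge 0}\frac{1}{k!}\sum_{\beta_0+\gamma_1+\cdots+\gamma_k=\beta}L_{\beta_0}^{eu}(q)\prod_{j=1}^{k}N_{\gamma_j}^{eu}(q),
$$
where the inner sum runs over decompositions with $\gamma_j\neq 0$ for all $j$. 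The next step is to observe that this is really a finite sum. The invariants $L_{\beta_0}^{eu}$ and $N_{\gamma_j}^{eu}$ vanish unless $\beta_0$ and the $\gamma_j$ are effective curve classes, and, fixing an ample class $\omega$, pairing with $\omega$ gives $\beta\cdot\omega=\beta_0\cdot\omega+\sum_{j}\gamma_j\cdot\omega$ with each $\gamma_j\cdot\omega$ a positive integer; hence $k\le\beta\cdot\omega$ and only finitely many tuples $(\beta_0,\gamma_1,\dots,\gamma_k)$ can occur.

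Then I would invoke Lemma~\ref{cor:L} and Lemma~\ref{lem:N}: by the former each $L_{\beta_0}^{eu}(q)$ is a Laurent polynomial in $q$, hence a rational function, and by the latter each $N_{\gamma_j}^{eu}(q)$ is the Laurent expansion at $q=0$ of a rational function of $q$. Since $P_{\beta}^{eu}(q)=\sum_{n}P_{n,\beta}^{eu}q^{n}$ is itself a Laurent series at $q=0$ (bounded below, as $P_n(X,\beta)=\emptyset$ for $n\ll 0$), the displayed identity is an identity of such Laurent series; as products and finite sums of rational functions are rational, it follows that $P_{\beta}^{eu}(q)$ is the Laurent expansion of a rational function of $q$. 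For the symmetry, both lemmas also record that each factor $L_{\beta_0}^{eu}(q)$ and $N_{\gamma_j}^{eu}(q)$ is invariant under $q\leftrightarrow 1/q$; since this substitution is a ring involution on rational functions of $q$, it fixes each product $L_{\beta_0}^{eu}(q)\prod_{j}N_{\gamma_j}^{eu}(q)$ and hence fixes the finite sum equal to $P_{\beta}^{eu}(q)$.

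Since all the substantive content — the wall-crossing identity (\ref{expan0}) and the rationality and $q\leftrightarrow 1/q$ symmetry of the $L_{\beta}^{eu}$ and $N_{\beta}^{eu}$ — is supplied by the quoted results, there is no serious obstacle in this deduction; it is essentially bookkeeping. The only point that genuinely requires an argument is the finiteness of the expansion of $P_{\beta}^{eu}(q)$ displayed above, which, as indicated, follows from the strict positivity of the $\omega$-degree on nonzero effective classes, and it is precisely this finiteness that makes both the rationality and the symmetry conclusions legitimate at the level of rational functions rather than merely formal series.
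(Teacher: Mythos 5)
Your proposal is correct and follows exactly the paper's route: the paper deduces Corollary~\ref{cor:main} directly by combining Theorem~\ref{thm:main2} with Lemma~\ref{cor:L} and Lemma~\ref{lem:N}, which is precisely your coefficient extraction from (\ref{expan0}). The only detail you add beyond the paper's one-line deduction is the explicit finiteness check via pairing with the ample class $\omega$, which is a legitimate and welcome piece of bookkeeping.
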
 
The series $Z_{\rm{PT}}$ also should have a decomposition such as (\ref{expan0}). 
In Problem~\ref{prob} we
 will address a certain technical problem on the 
 Ringel-Hall Lie algebra of $\aA^p$, which enables us to decompose $Z_{\rm{PT}}$ and 
solve Conjecture~\ref{rat} for PT-theory. 
As a conclusion, 
 we have obtained a conceptual understanding of 
the rationality conjecture and DT-PT correspondences 
in terms of wall-crossing phenomena in 
the derived category, and they have been reduced to showing 
a rather technical problem, namely a compatibility of 
Ringel-Hall Lie algebra structure of $\aA^p$ with 
taking virtual classes via Behrend's constructible functions. 

\subsection{Acknowledgement}
The author thanks R.~Thomas, R.~Pandharipande for valuable comments, 
and D.~Joyce for the comment on Problem~\ref{prob}. 
This work is supported by 
World Premier International 
Research Center InitiativeiWPI Initiative), MEXT, Japan.

\subsection{Convention}
All the varieties and schemes are defined over $\mathbb{C}$. 
For a variety $X$, the category of coherent sheaves on $X$ is 
denoted by $\Coh(X)$. We say $E\in \Coh(X)$ is $d$-dimensional 
if $\dim \Supp(E)=d$. 

\section{Review of Joyce's work}\label{sec:Joy}
This section is devoted to review Joyce's works~\cite{Joy1}, 
\cite{Joy2}, \cite{Joy3}, \cite{Joy4} on counting invariants
of semistable objects on abelian categories. 
We discuss in a general framework rather than working with 
the category of 
perverse coherent sheaves $\aA^p$, which we will introduce 
in the next section. 
\subsection{Setting}
We begin with a generality of (weak) stability 
conditions on abelian categories.
 Let $\aA$ be a $\mathbb{C}$-linear abelian category,
 and $K(\aA)$ its Grothendieck group.
 We put the same assumption as in~\cite{Joy4}, 
 i.e. $\Hom(E, F)$, $\Ext^1(E, F)$ for 
 any $E, F\in \aA$ are finite dimensional $\mathbb{C}$-vector
 spaces, and compositions $\Ext^{i}(E, F)\times \Ext^{j}(F, G)
 \to \Ext^{i+j}(E, G)$ for $i, j, i+j=0, 1$
 are bilinear. 
 These conditions are satisfied in several good cases, i.e. 
 $\aA=\modu A$ for a finite dimensional algebra $A$, 
 or $\aA=\Coh(X)$ for a projective variety $X$. 
 In the first case, 
the group $K(\aA)$ is finitely generated, but this is 
not true in the latter case. 
So
instead we fix a quotient space,
$$\nN(\aA)\cneq K(\aA)/\equiv, $$
for some equivalence relation $\equiv$
such that a class $[E] \in \nN(\aA)$ is non-zero for any
$0\neq E \in \aA$.
For instance if $\aA=\Coh(X)$, an 
equivalence relation $\equiv$ can be taken by
\begin{align}\label{def:equiv}
E_1 \equiv E_2 \quad \stackrel{\text{def}}{\leftrightarrow} \quad
\chi(E_1, F)=\chi(E_2, F) \quad \text{ for any }F\in \aA,
\end{align}
where $\chi(E, F)$ is defined by 
\begin{align}\label{chi}
\chi(E, F)=\sum_{i\in \mathbb{Z}}(-1)^i \dim\Ext^i(E, F).
\end{align}
Then $\nN(\aA)$ is embedded into $H^{\ast}(X, \mathbb{Q})$, and it is a
finitely generated $\mathbb{Z}$-module. 
The \textit{closed positive cone} and the
\textit{positive cone} of $\aA$ are defined by  
\begin{align*}
\overline{C}(\aA) &\cneq \im (\aA \to K(\aA) \to \nN(\aA)), \\
C(\aA) &\cneq \overline{C}(\aA)\setminus\{0\}, 
\end{align*}
respectively.
For a subcategory $\bB \subset \aA$, we shall use 
the notation $C(\bB)\cneq \im(\bB \to C(\aA)) \subset C(\aA)$, etc. 
For an object $E\in \aA$, its class is denoted by $[E] \in \overline{C}(\aA)$, 
or we omit [ ] if there is no confusion. 
Let $(T, \succeq)$ be a totally ordered set.
\begin{defi}\label{defi:stab}
\emph{
A \textit{weak stability function} is a map, 
$$Z \colon C(\aA) \lr T, $$
such that 
if $E, F, G \in C(\aA)$ satisfies 
$E=F+G$, we have either 
\begin{align*}
& Z(E)\preceq Z(F) \preceq Z(G), \quad \mbox{or} \\
& Z(E)\succeq Z(F) \succeq Z(G).
 \end{align*}
 A weak stability function is a \textit{stability function} if, 
 for $E, F, G$ as above,  
 we have either 
 \begin{align*}
 &Z(E)\prec Z(F) \prec Z(G), \quad \mbox{or} \\
 &Z(E)\succ Z(F) \succ Z(G), \quad \mbox{or} \\
 &Z(E)=Z(F)=Z(G).
 \end{align*}}
\end{defi}
Given a weak stability function, we can define 
the set of (semi)stable objects.
\begin{defi}\emph{
Let $Z\colon C(\aA) \to T$
be a weak stability function. An object $E\in \aA$ is 
called \textit{Z-(semi)stable} if for any nonzero 
subobject $F\subset E$, we have 
$$Z(F)\prec Z(E/F), \quad 
(\mbox{resp. }Z(F)\preceq Z(E/F).)$$}
\end{defi}
The notion of (weak) stability conditions 
is defined as follows. 
\begin{defi}\emph{
A (weak) stability function $Z\colon C(\aA) \to T$ is a
\textit{(weak) stability condition} if for any object $E\in \aA$, 
there is a filtration
\begin{align}\label{HN}
0=E_0 \subset E_1 \subset \cdots \subset E_n =E, \end{align}
such that each subquotient $F_i=E_i/E_{i-1}$ is
$Z$-semistable with 
$$Z(F_1)\succ Z(F_2)\succ \cdots \succ Z(F_n).$$ 
}
\end{defi}
It is easy to see that the filtration (\ref{HN}) is 
unique up to an isomorphism, if exists. The filtration (\ref{HN})
is called a \textit{Harder-Narasimhan filtration}. 
Here we give some examples. 

\begin{exam}\label{ex:stab}\emph{
 (i) For an abelian category $\aA$, let $W\colon \nN(\aA) \to \mathbb{C}$
be a group homomorphism such that for any $E\in \aA \setminus \{ 0 \}$,
we have 
$$W(E) \in \mathbb{H}\cneq \{ r\exp(i\pi\phi) \mid 0<\phi \le 1 \}.$$
For instance if $\aA=\modu A$ for a finite dimensional 
$\mathbb{C}$-algebra $A$, the positive cone $C(\aA)$ is spanned 
by finite number of simple objects $S_1, \cdots, S_n \in \aA$, 
and such $W$ is obtained by choosing the image of $[S_i] \in C(\aA)$
for $1\le i\le n$
under $W$. 
We set $(T, \succeq)=((0, 1], \ge)$, 
and 
$$Z\colon C(\aA) \ni E \longmapsto \frac{1}{\pi}\Imm \log Z(E) \in T.$$
Then $Z$ is a stability condition on $\aA$. This is 
Bridgeland's approach of stability conditions~\cite{Brs1}.}

\emph{(ii) Let $X$ be a smooth projective surface 
and set $\aA=\Coh(X)$. 
Let $\omega$ be an
ample divisor on $X$. 
For $E\in \Coh(X)$ we set 
$$\mu_{\omega}(E)
=\left\{ \begin{array}{ll} \frac{c_1(E)\cdot \omega}{\rk (E)} & 
\mbox{ if }E \mbox{ is not torsion. } \\
\infty & \mbox{ if }E\mbox{ is torsion.}
\end{array}\right.$$
Then the map $C(\aA) \ni E \mapsto \mu_{\omega}(E) 
\in \mathbb{Q}\cup \{\infty\}$
is a weak stability condition on $\aA$, but not a stability 
condition on $\aA$. 
}
\end{exam}
\begin{rmk}\emph{
Here we mention that
a theory of stability conditions 
on triangulated categories 
is developed by Bridgeland~\cite{Brs1},
motivated by M.~Douglas's $\Pi$-stability~\cite{Dou1}, \cite{Dou2}. 
For a triangulated category $\dD$, Bridgeland's stability condition
consists of $(W, \aA)$, where $\aA \subset \dD$ is the heart of 
a bounded t-structure on $\dD$, and $W$ is a group homomorphism 
$K(\aA) \to \mathbb{C}$, as in Example~\ref{ex:stab} (i). 
Especially $W$ determines a stability condition
on the abelian category $\aA$. 
He then shows that the set of ``good'' stability 
conditions form a complex manifold 
$\Stab(\dD)$. Although Bridgeland's theory is quite powerful, we 
shall study in this paper more general notion of 
(weak) stability conditions, which is used in Joyce's works.} 
\end{rmk}

\subsection{Ringel-Hall algebras}
In this subsection, we introduce the algebra $\hH(\aA)$
 associated to an abelian category $\aA$, whose 
 details are seen in~\cite{Joy2}. 
Let   
$Z\colon C(\aA) \to T$ be a weak stability function. 
At this moment, we put the following assumption. 
\begin{assum}\label{assum}

\begin{itemize}

\item $\aA$ is noetherian and $Z$-artinian. 

\item  There is an Artin stack of locally finite type
$\mathfrak{Obj}(\aA)$, which parameterizes objects $E\in \aA$. 

\item For $v\in \overline{C}(\aA)$, 
let  
 $\mathfrak{M}^v(Z) \subset \mathfrak{Obj}(\aA)$
 be the substack of $Z$-semistable objects $E\in \aA$
 with $[E]=v$. Then $\mathfrak{M}^v(Z)$ is an open substack 
 of $\mathfrak{Obj}(\aA)$, 
and it is of finite type. 
\end{itemize}
\end{assum}
Here we 
say $\aA$ is $Z$-\textit{artinian} if there is no infinite 
sequence 
$$\cdots \subset E_n \subset E_{n-1} \subset \cdots \subset E_1 \subset E_0, $$
such that 
$E_{i+1} \neq E_i$ and $Z(E_{i+1})\succeq Z(E_{i}/E_{i+1})$ for any $i$. 
The first condition of Assumption~\ref{assum}
ensures the existence of Harder-Narasimhan filtrations, 
hence $Z$ is a weak stability condition, by 
the same argument of~\cite[Theorem~2]{Ruda}.
In order to state the second assumption, we need to know 
about the notion algebraic families of objects and morphisms in $\aA$. 
This notion is obvious if $\aA=\Coh(X)$ for a variety $X$, 
but in general we need some additional extra data, which  
is given in~\cite[Assumptions~7.1, 8.1]{Joy1}. 
For the introduction of Artin stacks, one can consult~\cite{GL}. 
For instance, 
Assumption~\ref{assum} is satisfied 
when $\aA=\modu A$ for a finite dimensional $\mathbb{C}$-algebra
$A$, and $Z$ is given as in Example~\ref{ex:stab} (i).

For a variety $Y$, recall that the Grothendieck ring of 
varieties over $Y$ is defined by 
$$K_0(\Var/Y)=\bigoplus_{(X, \rho)} \mathbb{Z}[(X, \rho)]/\sim,$$
where $X$ is a variety with a morphism $\rho \colon X\to Y$, and 
equivalence relations are given by 
$$[(X, \rho)]\sim [(X^{\dag}, \rho |_{X^{\dag}})]+
[(X \setminus X ^{\dag}, 
\rho |_{X \setminus X^{\dag}})],$$
where $X^{\dag}$ is a closed subvariety of $X$.
Taking the fiber products over $Y$, there is a natural product on 
$K_0(\Var/Y)$, 
\begin{align}\label{ring}
[(X, \rho)]\cdot [(X', \rho')]=[(X\times _{Y}X', \rho \circ p)],
\end{align}
where $p$ is the projection $X\times_{Y}X' \to X$. 

In order to introduce $\hH(\aA)$, let us introduce the 
notion of Grothendieck rings of Artin stacks. 
\begin{defi}\emph{\bf{\cite{Joy5}}}
\emph{
Let 
 $\yY$ be an Artin stack of locally finite type over $\mathbb{C}$. 
Define the $\mathbb{Q}$-vector space 
$K_0(\St/\yY)$ to be 
$$K_0(\St/\yY) \cneq \bigoplus _{(\xX, \rho)}\mathbb{Q}[(\xX, \rho)]/\sim,$$
where $(\xX, \rho)$ is a pair such that $\xX$ is
an Artin $\mathbb{C}$-stack of finite type
with affine geometric stabilizers, and $\rho \colon \xX
\to \yY$
is a 1-morphism.
The relations $\sim$ are given by 
$$[(\xX, \rho)] \sim [(\xX^{\dag}, \rho |_{\xX^{\dag}})]+
[(\xX \setminus \xX ^{\dag}, 
\rho |_{\xX \setminus \xX^{\dag}})],$$
for closed substacks $\xX^{\dag}\subset \xX$. }
\end{defi}
Again taking the fiber products over $\yY$ gives a product $\cdot$
on $K_0(\St/\yY)$, 
\begin{align}\label{cdot}
[(\xX, \rho)]\cdot [(\xX', \rho')]=[(\xX\times _{\yY}\xX', \rho \circ p)],
\end{align}
where $p$ is the projection $\xX\times _{\yY}\xX' \to \xX$. 

\begin{defi}\emph{\bf{\cite{Joy2}}}
\emph{
Let $\aA$ be an abelian category satisfying the second
condition of Assumption~\ref{assum}. 
We define the $\mathbb{Q}$-vector space $\hH(\aA)$ to be 
$$\hH(\aA)\cneq K_0(\St/\mathfrak{Obj}(\aA)).$$}
\end{defi}
The vector space $\hH(\aA)$ is graded by $v\in \overline{C}(\aA)$, 
$$\hH(\aA)=\bigoplus_{v\in \overline{C}(\aA)} \hH^v(\aA), \quad 
\hH^v(\aA)\cneq K_0(\St/\mathfrak{Obj}^v(\aA)), $$
where $\mathfrak{Obj}^v(\aA)$ is the stack of objects 
$E\in \aA$ with $[E]=v$.  
  There is an associative multiplication $\ast$ 
  on $\hH(\aA)$, based on \textit{Ringel-Hall algebras}, 
  which differs from the product (\ref{cdot}). 
Let $\mathfrak{Ex}(\aA)$ be the moduli stack of 
exact sequences $0 \to E_1 \to E_2 \to E_3 \to 0$ in $\aA$. 
It is shown in~\cite[Theorem 8.2]{Joy1} that 
$\mathfrak{Ex}(\aA)$ is an Artin stack of locally 
finite type over $\mathbb{C}$. 
We have the following 1-morphisms, 
$$p_i\colon \mathfrak{Ex}(\aA) \ni (0 \to E_1 \to E_2 \to E_3 \to 0) 
\longmapsto E_i \in \mathfrak{Obj}(\aA), $$
for $i=1, 2, 3$. 
Take $f_i=[(\xX_i, \rho_i)] \in \hH(\aA)$ for $i=1, 2$.
We have the following diagram, 
$$\xymatrix{
(p_1, p_3)^{\ast}(\xX_1 \times \xX_2) 
\ar[r]^{u} \ar[d] &
\mathfrak{Ex}(\aA) \ar[r]^{p_2}\ar[d]^{(p_1, p_3)} & \mathfrak{Obj}(\aA)
 \\
 \xX_1 \times \xX_2 \ar[r]^{(\rho_1, \rho_2)} & 
\mathfrak{Obj}(\aA) \times \mathfrak{Obj}(\aA). & }$$ 
Here the left diagram is a Cartesian diagram. 
\begin{defi}\emph{
 We define the 
$\ast$-product $f_1 \ast f_2$ by 
$$f_1 \ast f_2=[((p_1, p_3)^{\ast}(\xX_1 \times \xX_2), \ p_2 \circ u)].$$}
\end{defi}
It is shown in~\cite[Theorem 5.2]{Joy2} that $\ast$ is associative and 
$\hH(\aA)$ is a $\mathbb{Q}$-algebra with identity
$[0 \hookrightarrow \mathfrak{Obj}(\aA)]$.

\begin{rmk}\label{SF}\emph{
Our algebra $\hH(\aA)$ is denoted by 
$\underline{\mathrm{SF}}(\mathfrak{Obj}(\aA))$ in Joyce's paper~\cite{Joy2}, 
and an element of $\underline{\mathrm{SF}}(\mathfrak{Obj}(\aA))$ is 
 called a \textit{stack function} on $\mathfrak{Obj}(\aA)$. 
There is another version of Ringel-Hall type algebra
discussed in~\cite{Joy2}, defined as the set of 
constructible functions on $\mathfrak{Obj}(\aA)$, 
denoted by $\mathrm{CF}(\mathfrak{Obj}(\aA))$ in~\cite{Joy2}. 
Although most of the readers might be more familiar with 
constructible functions than stack functions,  
we use the latter one  
since we want to apply~\cite[Theorem 6.12]{Joy2}
which is formulated only for stack functions.} 
\end{rmk}

\subsection{Elements $\delta^{v}(Z)$, $\epsilon^{v}(Z)$}
\label{Count}
Let $Z\colon C(\aA) \to T$ be a weak stability condition, 
satisfying Assumption~\ref{assum}. 
For an Artin
 substack $i\colon \mathfrak{M}\hookrightarrow \mathfrak{Obj}(\aA)$, 
  we write the element $[(\mathfrak{M}, i)]\in \hH(\aA)$
   as $[\mathfrak{M}\hookrightarrow 
  \mathfrak{Obj}(\aA)]$.
\begin{defi}\emph{
For $v\in C(\aA)$, we define $\delta^{v}(Z), \epsilon^{v}(Z)
 \in \hH(\aA)$
to be 
\begin{align}\notag
\delta^{v}(Z)&=[\mathfrak{M}^v(Z) \hookrightarrow 
\mathfrak{Obj}(\aA)] \in \hH^v(\aA), \\
\label{eps}
\epsilon^{v}(Z) &=
\sum_{\begin{subarray}{c}
l\ge 1, \ v_i \in C(\aA), \ v_1+\cdots v_l=v, \\
Z(v_i) = Z(v) , \ 1\le i \le l
\end{subarray}}\frac{(-1)^{l-1}}{l}\delta^{v_1}(Z)\ast \cdots
\ast \delta^{v_l}(Z) \in \hH^v(\aA).
\end{align}
}
\end{defi}
Under Assumption~\ref{assum}, 
the sum (\ref{eps}) is a finite sum,
(see~\cite[Proposition~4.9]{Joy3},) hence 
$\epsilon^{v}(Z)$ is an element of $\hH(\aA)$. 
In~\cite[Theorem~8.7]{Joy3}, Joyce shows that 
$\epsilon^{v}(Z)$ is an element of a certain
Lie subalgebra of $\hH(\aA)$, called \textit{Ringel-Hall Lie algebra,}
\begin{align}\label{Liesub}
\epsilon^{v}(Z) \in \mathfrak{G}(\aA) \subset \hH(\aA). 
\end{align}
The Lie algebra $\mathfrak{G}(\aA)$ is 
denoted by $\mathrm{SF}_{\mathrm{al}}^{\mathrm{ind}}(\mathfrak{Obj}(\aA))$
in Joyce's paper~\cite{Joy2}. 
If we work over the
Hall-type algebra $\mathrm{CF}(\mathfrak{Obj}(\aA))$, 
(see Remark~\ref{SF},) 
the corresponding Lie algebra $\mathrm{CF}^{\rm{ind}}(\mathfrak{Obj}(\aA))$
 is the set of constructible functions
on $\mathfrak{Obj}(\aA)$,  
supported on indecomposable objects.
One might expect, as an analogue for
$\hH(\aA)$, that 
an element $[(\xX, \rho)]$ is 
contained in $\mathfrak{G}(\aA)$ if the image of 
$\rho$ in $\mathfrak{Obj}(\aA)$
is supported on indecomposable objects. 
 However Joyce suggests that this definition is not the best
 analogue, and he introduces the notion of ``virtual indecomposable
  objects'', and defines
  $\mathrm{SF}_{\mathrm{al}}^{\mathrm{ind}}(\mathfrak{Obj}(\aA))$
  in~\cite[Definition~5.13]{Joy2}
  as the set of stack functions supported on
  virtual indecomposable objects. 
  We omit the precise definition of $\mathfrak{G}(\aA)$
  here, as we will not use this. 
  The Lie algebra $\mathfrak{G}(\aA)$ also has the decomposition, 
  $$\mathfrak{G}(\aA)=\bigoplus_{v\in \overline{C}(\aA)}
  \mathfrak{G}^v(\aA), \quad \mathfrak{G}^v(\aA)\cneq \hH^v(\aA)
  \cap \mathfrak{G}(\aA),$$
  and $\epsilon^{v}(Z)$ is an element of $\mathfrak{G}^v(\aA)$. 
  The conceptual meaning of the definition of $\epsilon^{v}(Z)$
  is that they are ``logarithms'' of $\delta^{v}(Z)$,
  i.e. for $t\in T$, we have formally 
  $$\sum_{v\in Z^{-1}(t)}
  \epsilon^{v}(Z)=\log \left(1+\sum_{v\in Z^{-1}(t)}\delta^{v}(Z)
  \right).$$
  Also see~\cite{BriTole} for 
   more arguments on the elements $\epsilon^{v}(Z)$.
  
  \subsection{Transformation of the elements $\delta^{v}(Z)$, 
  $\epsilon^{v}(Z)$}
   The descriptions of the variations of the 
  elements $\delta^{v}(Z)$, $\epsilon^{v}(Z)$
  under change of $Z$ are investigated in~\cite{Joy4}. 
  Let us briefly recall the main idea of~\cite[Theorem~5.2]{Joy4}
  in this subsection. 
  We first introduce the following definition. 
  \begin{defi}\label{def:dominate}
  \emph{
  Let $Z, Z'\colon C(\aA) \to T$ be weak stability conditions
  and take $v\in C(\aA)$. 
We say $Z'$ \textit{dominates} $Z$ 
with respect to $v$ 
 if for $v_1, v_2 \in C_{\le v}(\aA)$, 
$Z(v_1)\succeq Z(v_2)$ implies $Z'(v_1)\succeq Z'(v_2)$. 
Here $C_{\le v}(\aA)$ is defined by 
\begin{align}\label{CA}
C_{\le v}(\aA)=\{v' \in C(\aA) \mid \mbox{ there is }v''\in C(\aA)
\mbox{ with }v'+v''=v\}.
\end{align} }
  \end{defi}
  The first step is to show the following theorem. 
  \begin{thm}\emph{\bf{\cite[Theorem~5.11]{Joy2}}}
  \label{thm:delta}
For weak stability conditions $Z, Z' \colon C(\aA) \to T$
satisfying Assumption~\ref{assum}, 
suppose that $Z'$ dominates $Z$ with respect to $v$. 
Then we have 
\begin{align}\label{delta}
\delta^{v}(Z')=\sum_{\begin{subarray}{c}
l\ge 1, \ v_i \in C(\aA), \ v_1+\cdots +v_l=v, \\
Z(v_i)\succ Z(v_{i-1}), \ Z'(v_i)=Z'(v), \ 1\le i\le l
\end{subarray}}
\delta^{v_1}(Z)\ast \cdots \ast \delta^{v_l}(Z). 
\end{align}
The sum (\ref{delta}) may not be a finite sum, 
but it converges in the sense of~\cite[Definition 2.16]{Joy4}
\end{thm}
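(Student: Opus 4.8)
The plan is to prove~\eqref{delta} by stratifying the moduli stack $\mathfrak{M}^v(Z')$ of $Z'$-semistable objects according to the Harder--Narasimhan type, with respect to $Z$, of its members. By the noetherian and $Z$-artinian hypotheses of Assumption~\ref{assum}, every $E\in\aA$ has a unique $Z$-HN filtration $0=E_0\subset E_1\subset\cdots\subset E_l=E$ with $Z$-semistable subquotients $F_i=E_i/E_{i-1}$; writing $v_i=[F_i]\in C(\aA)$ one gets $v=v_1+\cdots+v_l$ with the $Z(v_i)$ strictly ordered (in the order matching the $\ast$-product convention, so that $\delta^{v_1}(Z)\ast\cdots\ast\delta^{v_l}(Z)$ equals the class of the substack of objects whose $Z$-HN type is exactly $(v_1,\dots,v_l)$; uniqueness of the HN filtration both makes this stratification well defined and identifies the stabilizers of the iterated extension with those of the object, so that no stacky fibres are introduced). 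Granting the key lemma below, I would then just sum the resulting characteristic stack functions over the admissible HN types, obtaining $\delta^v(Z')=[\mathfrak{M}^v(Z')\hookrightarrow\mathfrak{Obj}(\aA)]=\sum_{(v_i)}[\,\{E:\,Z\text{-HN type}=(v_i)\}\hookrightarrow\mathfrak{Obj}(\aA)\,]=\sum_{(v_i)}\delta^{v_1}(Z)\ast\cdots\ast\delta^{v_l}(Z)$, the sum running exactly over the tuples with $Z(v_i)\succ Z(v_{i-1})$ and $Z'(v_i)=Z'(v)$.

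The key lemma I would isolate is: for $E\in\aA$ with $[E]=v$ and $Z$-HN subquotient classes $v_1,\dots,v_l$, the object $E$ is $Z'$-semistable if and only if $Z'(v_i)=Z'(v)$ for all $i$. For the ``if'' part, given $A\subset E$ the induced filtration $A\cap E_\bullet$ exhibits $A$ as an iterated extension of the subobjects $A\cap E_i/A\cap E_{i-1}\hookrightarrow F_i$; $Z$-semistability of $F_i$ gives $Z(A\cap E_i/A\cap E_{i-1})\preceq Z(v_i)$, and as both classes lie in $C_{\le v}(\aA)$ the domination hypothesis upgrades this to $Z'(A\cap E_i/A\cap E_{i-1})\preceq Z'(v_i)=Z'(v)$; iterating the weak stability axiom (Definition~\ref{defi:stab}) then forces $Z'(A)\preceq Z'(v)=Z'(E)$, which is $Z'$-semistability. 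For the ``only if'' part I would first check that each HN subobject $E_k$ satisfies $Z(E_k)\succ Z(E/E_k)$ (iterating the axiom along the two induced HN filtrations), so domination gives $Z'(E_k)\succeq Z'(E/E_k)$ while $Z'$-semistability of $E$ gives the reverse inequality, whence $Z'(E_k)=Z'(E/E_k)=Z'(v)$ for every $k$; I would then try to promote this to $Z'(v_k)=Z'(v)$ for the individual pieces by verifying that each $E_k$ and each $E/E_{k-1}$ is itself $Z'$-semistable of phase $Z'(v)$ and applying the axiom to $F_k=E_k/E_{k-1}\subset E/E_{k-1}$.

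The hard part will be precisely this last step of the ``only if'' direction: propagating $Z'(E_k)=Z'(v)$ from the HN subobjects down to the individual subquotients $F_k$ is delicate because $Z'$ is valued in an arbitrary totally ordered set, where the constraint that $Z'(B+C)$ lie between $Z'(B)$ and $Z'(C)$ is much weaker than additivity of a phase, so one has to exploit $Z'$-semistability of $E$ quite fully (through intersections with many subobjects, or an induction on $l$). If this cannot be pushed through in full generality, the fallback is Joyce's purely algebraic route: invert the two HN relations $\bar\delta^v=\sum\delta^{v_1}(Z)\ast\cdots$ and its analogue for $Z'$ by M\"obius inversion over ordered partitions of $v$, substitute the first into the second, and show that the transformation coefficients thereby produced collapse to $0$ or $1$ exactly as in~\eqref{delta} --- domination being precisely the hypothesis that makes the two total preorders on $C_{\le v}(\aA)$ compatible, which is what kills all the other contributions. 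A last technical point is that the right-hand side of~\eqref{delta} is in general an infinite sum, so I would have to check that it is a permissible, locally finite family of stack functions and hence converges in the sense of~\cite[Definition~2.16]{Joy4}; this should follow from the finite-type hypotheses of Assumption~\ref{assum} together with the boundedness of $Z$-HN types within bounded families.
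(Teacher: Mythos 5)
Your overall strategy is the same as the paper's: stratify $\mathfrak{M}^{v}(Z')$ by $Z$-Harder--Narasimhan type, prove that an object of class $v$ is $Z'$-semistable if and only if every $Z$-HN factor has $Z'$-value equal to $Z'(v)$, and translate the resulting match of strata into the identity (\ref{delta}) in $\hH(\aA)$; the paper itself only sketches this and defers the full Hall-algebra argument to Joyce. The one point worth correcting is the ``only if'' direction, which you rightly single out as delicate: your plan of first establishing $Z'(E_k)=Z'(E/E_k)=Z'(v)$ for the HN subobjects and then descending to the individual factors via the seesaw for $F_k=E_k/E_{k-1}\subset E/E_{k-1}$ really does stall, since when the middle term $Z'(E/E_{k-1})$ and one outer term $Z'(E/E_k)$ of a seesaw coincide, the axiom of Definition~\ref{defi:stab} puts no constraint at all on the remaining term $Z'(F_k)$ in a general totally ordered set $T$. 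The paper sidesteps this by applying domination directly to the \emph{consecutive factor classes}: each $v_i$ lies in $C_{\le v}(\aA)$, so the strict HN ordering of the $Z(v_i)$ forces the sequence $Z'(v_1),\dots,Z'(v_l)$ to be monotone, and then testing $Z'$-semistability of $E$ against each subobject $E_k$ immediately squeezes that monotone sequence to be constant. With that one substitution your argument closes; your remaining observations (uniqueness of the HN filtration identifying the strata with the $\ast$-products, and convergence of the possibly infinite sum from boundedness of HN types under Assumption~\ref{assum}) agree with what the paper asserts.
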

\begin{proof}
We just explain the idea of the proof. For the 
full proof, see~\cite[Theorem~5.11]{Joy4}. 
For a $Z'$-semistable object $E\in \aA$ with 
$[E]=v$, there is a Harder-Narasimhan filtration
with respect to $Z$, i.e. there is a unique filtration
\begin{align}\label{conv}
0=E_0 \subset E_1 \subset \cdots \subset E_l=E, 
\end{align}
such that each $F_i=E_i/E_{i-1}$ is $Z$-semistable 
with $Z(F_i)\succ Z(F_{i-1})$. Since 
$Z'$ dominates $Z$ with respect to $v$,
and each class $[F_i]\in C(\aA)$ is contained 
in $C_{\le v}(\aA)$, 
 we have $Z'(F_i) \succeq Z'(F_{i-1})$.
Hence $Z'$-semistability of $E$ implies $Z'(F_i)=Z'(F_{i-1})$.
 
Conversely for an object $E\in \aA$, suppose
that there is a filtration (\ref{conv}) 
such that $F_i=E_i/E_{i-1}$ is $Z$-semistable with 
$Z(F_i)\succ Z(F_{i-1})$ and $Z'(F_i)=Z'(F_{i-1})$
for all $i$.  
Since $Z'$ dominates $Z$ with respect to $v$, 
the object $F_i$ is also $Z'$-semistable, 
hence $E$ is $Z'$-semistable. 

As a consequence, an object $E\in \aA$ with $[E]=v$
is $Z'$-semistable if and only if there is a unique
filtration (\ref{conv}) such that 
each $F_i=E_i/E_{i-1}$ is $Z$-semistable and
$v_i=[F_i]\in C(\aA)$ for $i=1, \cdots, l$ satisfy
\begin{align}\label{satisfy}
& v_1+\cdots +v_l=v, \\
&\notag Z(v_1)\succ Z(v_2)\succ \cdots \succ Z(v_l), \\
&\notag Z'(v_1)=Z'(v_2)=\cdots =Z'(v_l).
\end{align}
This observation is expressed as 
(\ref{delta}) in terms of the algebra $\hH(\aA)$.
\end{proof}
We omit the definition of the convergence~\cite[Definition 2.16]{Joy4}
 here, as we 
will only treat the cases 
 that the relevant sums have only finitely many terms. 
The next step is to invert (\ref{delta}), and give the formula, 
\begin{align}\label{delta2}
\delta^{v}(Z)=\sum_{\begin{subarray}{c} l\ge 1, \
v_i \in C(\aA), \ 
v_1+\cdots +v_l=v, \ Z'(v_i)=Z'(v),
 \\
Z(v_1 +\cdots +v_i)\succ Z(v_{i+1}+\cdots v_{l}),  \
1\le i\le l
\end{subarray}}
\delta^{v_1}(Z')\ast \cdots \ast \delta^{v_l}(Z').
\end{align}
The proof is provided in~\cite[Theorem~5.12]{Joy4}.
The sum (\ref{delta2}) may not converge in the
sense of~\cite[Definition 2.16]{Joy4}, but
if we impose the assumption that the change from 
$Z$ to $Z'$ is \textit{locally finite}, (
we omit the definition of the local finiteness, 
see~\cite[Definition~5.1]{Joy4},)
then the sum (\ref{delta2}) converges.

Finally for two weak stability conditions $Z$, $Z'$, 
consider the following situation. 
\begin{align*}
(\spadesuit) : &\text{ there are 
weak stability conditions }
Z=Z_1, Z_2, \cdots, Z_{m}=Z', 
W_1, \cdots, W_{m-1} \text{ satisfying} \\
& \text{ Assumption~\ref{assum},
 such that }
W_i \text{ dominates }
Z_i, Z_{i+1} \text{ w.r.t. }v, \text{ and all changes from }
Z_i \text{ to} \\ 
& \ W_i, W_{i-1} \text{ are locally finite.}
\end{align*}
Then in principle one can express $\delta^{v}(Z')$ in terms of 
$\delta^v(Z)$ in the algebra $\hH(\aA)$, 
by applying the formulas (\ref{delta}), (\ref{delta2}) 
successively. 
The transformation coefficients are determined purely 
combinatory, and they are given as follows. 

\begin{defi}\emph{\bf{\cite[Definition~4.2]{Joy4}}}\emph{
Take $v_1, \cdots, v_l \in C(\aA)$ and 
weak stability conditions $Z, Z' \colon C(\aA) \to T$. 
Suppose that for each $i=1, \cdots, l-1$, we have either 
(\ref{eith1}) or (\ref{eigh2}), 
\begin{align}
\label{eith1}
Z(v_i) \preceq Z(v_{i+1}) & \mbox{ and } Z'(v_1 +\cdots + v_i) 
\succ Z'(v_{i+1}+
\cdots +v_l), \\
\label{eigh2}
Z(v_i)\succ Z(v_{i+1}) & \mbox{ and } Z'(v_1+\cdots +v_i) \preceq
 Z'(v_{i+1}+
\cdots +v_l).
\end{align}
Then define 
$S(\{v_1, \cdots, v_l\}, Z, Z')$ to be $(-1)^r$, where 
$r$ is the number of $i=1, \cdots, l-1$ satisfying (\ref{eith1}). 
Otherwise we define 
$S(\{v_1, \cdots, v_l\}, Z, Z')=0$.} 
\end{defi}
We have the following formula.  
\begin{thm}\emph{\bf{\cite[Theorem~5.2]{Joy4}}}\label{Joy1}
Under the situation $(\spadesuit)$, we have 
\begin{align}\label{S}
\delta^{v}(Z')=
\sum_{\begin{subarray}{c}l\ge 1, \ v_i \in C(\aA), \\
v_1+\cdots +v_l=v
\end{subarray}}S(\{v_1, \cdots, v_l\}, Z, Z')
\delta^{v_1}(Z)\ast \cdots \ast \delta^{v_l}(Z).
\end{align}
The sum (\ref{S}) converges in the sense of~\cite[Definition 2.16]{Joy4}.
\end{thm}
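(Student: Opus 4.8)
The plan is to derive the formula (\ref{S}) by applying the two elementary transformation formulas (\ref{delta}) and (\ref{delta2}) successively along the chain $Z=Z_1,W_1,Z_2,W_2,\dots,W_{m-1},Z_m=Z'$ given in $(\spadesuit)$, and then to identify the resulting iterated sum of products $\delta^{v_1}(Z)\ast\cdots\ast\delta^{v_l}(Z)$ with the combinatorial coefficients $S(\{v_1,\dots,v_l\},Z,Z')$. First I would treat the base case $m=2$: here $W_1$ dominates both $Z_1=Z$ and $Z_2=Z'$, so (\ref{delta}) expresses $\delta^v(W_1)$ in terms of the $\delta^{v_i}(Z)$, and (\ref{delta2}) expresses each $\delta^{v_i}(Z')$ in terms of the $\delta^{w_j}(W_1)$; substituting the former into the latter and regrouping the telescoping partial sums yields an expression for $\delta^v(Z')$ as a sum over decompositions $v_1+\cdots+v_l=v$ with a sign that counts exactly the indices where the order relation flips between $Z$ and $W_1$. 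One checks this sign equals $S(\{v_1,\dots,v_l\},Z,W_1)$, and then that a second application handling the pair $W_1,Z_2$ — using that $S$ is multiplicative under refinement (this is the content of the associativity/combinatorial identity for $S$ proved in~\cite{Joy4}) — produces $S(\{v_1,\dots,v_l\},Z,Z_2)$. The general case $m>2$ follows by induction on $m$, composing the transformation for $Z_1\rightsquigarrow Z_2$ with that for $Z_2\rightsquigarrow Z_m$ and again invoking the multiplicativity of the $S$-coefficients.

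The key steps, in order, are: (1) substitute (\ref{delta}) into (\ref{delta2}) for the single wall-crossing through $W_1$, obtaining an unrefined sum over decompositions with an a priori complicated coefficient expressed as an alternating sum over common refinements; (2) perform the combinatorial identification of that alternating sum with $(-1)^r$ where $r$ counts the flips as in (\ref{eith1}), i.e. with $S(\{v_1,\dots,v_l\},Z,W_1)$ — this is where one uses the precise way partial sums $Z(v_1+\cdots+v_i)$ versus $Z(v_{i+1}+\cdots+v_l)$ interact with semistability; (3) compose two such wall-crossings and invoke the identity $\sum S(\{\dots\},Z,W)\,S(\{\dots\},W,Z')=S(\{\dots\},Z,Z')$ (summed over intermediate refinements) to pass from a two-step chain to the direct transformation; (4) induct on $m$. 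Throughout one must track convergence: each individual step converges by Theorem~\ref{thm:delta} and the local finiteness hypothesis built into $(\spadesuit)$, and the combination converges in the sense of~\cite[Definition~2.16]{Joy4} — but since in our applications all relevant sums are finite, I would only indicate that the general convergence is handled in~\cite[Theorem~5.2]{Joy4} and not reproduce it.

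The hard part will be step (2), the purely combinatorial verification that the alternating sum of products arising from naively composing (\ref{delta}) and (\ref{delta2}) collapses to the single sign $(-1)^r$. The subtlety is that (\ref{delta}) sorts subquotients by $Z(v_i)\succ Z(v_{i-1})$ while (\ref{delta2}) sorts by the partial-sum condition $Z(v_1+\cdots+v_i)\succ Z(v_{i+1}+\cdots+v_l)$, and one must show that summing over all ways of interleaving these two sorting conventions on a common refinement of $v_1+\cdots+v_l$ produces massive cancellation, leaving only the terms where the refinement is trivial and the sign is determined by the flip-count. This is essentially a Möbius-inversion argument on the poset of refinements, and the cleanest route is to reduce it to the corresponding identity already established in Joyce's work rather than to reprove it from scratch; I would therefore state that the proof reduces, after the substitution in step (1), to~\cite[Theorem~4.5 and Theorem~5.2]{Joy4}, and restrict the new content here to setting up the chain $(\spadesuit)$ and bookkeeping the compositions.
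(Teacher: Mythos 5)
Your proposal follows essentially the same route as the paper, which in fact gives no proof of this statement at all: it simply cites \cite[Theorem~5.2]{Joy4} after sketching exactly the strategy you describe, namely applying (\ref{delta}) and (\ref{delta2}) successively along the chain in $(\spadesuit)$ and observing that the resulting coefficients are determined purely combinatorially to be $S(\{v_1,\cdots,v_l\},Z,Z')$. Your deferral of the hard cancellation/composition identities for the $S$-coefficients to Joyce's work is consistent with what the paper itself does.
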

\begin{rmk}\label{relevant}\emph{
Our condition ``$\ast'$ dominates $\ast$ w.r.t. $v$''
in Definition~\ref{def:dominate} 
is weaker than Joyce's condition ``$\ast'$ dominates $\ast$''
given in~\cite[Definition~3.16]{Joy4}, and~\cite[Theorem~5.2]{Joy4}
is formulated using the latter condition.  
However if we want to know (\ref{S}) for a fixed $v\in C(\aA)$,
 it is enough to assume 
``$\ast'$ dominates $\ast$ w.r.t. $v$'' in 
$(\spadesuit)$, since all the $v_i$ in the sum
(\ref{S}) are contained in $C_{\le v}(\aA)$.} 
\end{rmk}
The relationship between 
 $\epsilon^{v}(Z')$ and $\epsilon^{v}(Z)$
 is deduced from (\ref{eps}), (\ref{S}), 
 and inverting (\ref{eps}), 
 \begin{align}\label{invert}
 \delta^{v}(Z)=\sum_{\begin{subarray}{c}
 l\ge 1, \ v_i \in C(\aA), \
 v_1+\cdots+v_l=v, \\
 Z(v_i) =Z(v), \ 1\le i\le l
 \end{subarray}}
 \frac{1}{l!}\epsilon^{v_1}(Z)\ast \cdots \ast \epsilon^{v_l}(Z). 
 \end{align}
 The proof of (\ref{invert}) is provided in~\cite[Theorem~8.2]{Joy3}.
 The transformation coefficients are given as follows.
\begin{defi}\emph{\bf{\cite[Definition~4.4]{Joy4}}}\label{defi:U}
\emph{
For $v_1, \cdots, v_l \in C(\aA)$, we
define 
$U(\{v_1, \cdots, v_l\}, Z, Z')\in \mathbb{Q}$ to be 
\begin{align}\notag
U(\{v_1, \cdots, v_l\}, Z, Z') &=\sum_{1\le m' \le m \le l}
\sum_{\begin{subarray}{c}
\text{surjective }\psi \colon \{1, \cdots, l\} \to \{1, \cdots, m\}, \
i\le j \text{ imply }\psi(i)\le \psi(j)
\\
\text{surjective } \xi \colon \{1, \cdots, m\} \to \{1, \cdots, m'\}, \
\ i\le j \text{ imply }
\xi(i)\le \xi(j), \\
\psi\text{ and }\xi \text{ satisfy }(\diamondsuit)
\end{subarray}} \\ 
& \qquad \prod_{a=1}^{m'}
S(\{ w_i \}_{i\in \xi^{-1}(a)}, Z, Z')\cdot 
 \frac{(-1)^{m'}}{m'}\cdot 
\prod_{b=1}^{m}\frac{1}{\lvert \psi^{-1}(b)\rvert!}. \label{U}
\end{align}
Here the condition $(\diamondsuit)$ is as follows. 
\begin{align*}
(\diamondsuit) :
& \text{ For }1\le i, j \le l \text{ with }\psi(i)=\psi(j),
\text{ we have }
Z(v_i)= Z(v_j), \text{ and for } 1\le i, j \le m', \text{ we have } \\
& Z'(\sum_{k\in \psi^{-1}\xi^{-1}(i)}v_k)= 
Z'(\sum_{k\in\psi^{-1}\xi^{-1}(j)}v_{k}).
\end{align*}
Also $w_i$ for $1\le i\le m$ is defined as 
$$w_i=\sum_{j\in \psi^{-1}(i)}v_j \in C(\aA).$$}
\end{defi}
\begin{thm}\emph{\bf{\cite[Theorem~5.2]{Joy4}}}
In the situation $(\spadesuit)$, the following holds.  
\begin{align}\label{TransU}
\epsilon^{v}(Z')=
\sum_{\begin{subarray}{c}l\ge 1, \ v_i \in C(\aA), \\
v_1+\cdots +v_l=v
\end{subarray}}U(\{v_1, \cdots, v_l\}, Z, Z')
\epsilon^{v_1}(Z)\ast \cdots \ast \epsilon^{v_l}(Z).
\end{align}
The sum (\ref{TransU}) converges in the sense of~\cite[Definition 2.16]{Joy4}.
\end{thm}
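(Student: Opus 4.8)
The final statement to prove is the transformation formula (\ref{TransU}) for the $\epsilon^v$-elements, asserting that $\epsilon^{v}(Z')=\sum U(\{v_1,\cdots,v_l\},Z,Z')\,\epsilon^{v_1}(Z)\ast\cdots\ast\epsilon^{v_l}(Z)$ under the situation $(\spadesuit)$. The plan is to deduce this from the already-established transformation formula for $\delta$-elements (Theorem~\ref{Joy1}, equation (\ref{S})) by purely algebraic bookkeeping, combining it with the two "logarithm/exponential" identities relating $\delta$ and $\epsilon$ at a fixed slope: the definition (\ref{eps}) expressing $\epsilon^v(Z')$ as a signed sum of products of $\delta^{v_i}(Z')$'s with equal $Z'$-slope, and its inverse (\ref{invert}) expressing $\delta^v(Z)$ as a sum over products of $\epsilon^{v_i}(Z)$'s with equal $Z$-slope divided by $l!$. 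So at the level of formal power series in the Ringel--Hall algebra, $\epsilon(Z')=\log(\delta(Z'))$, $\delta(Z')$ is rewritten via (\ref{S}) in terms of $\delta(Z)$, and each $\delta^{w_i}(Z)$ is expanded via (\ref{invert}) in terms of $\epsilon(Z)$; the claim is that after collecting terms the coefficient of a given ordered product $\epsilon^{v_1}(Z)\ast\cdots\ast\epsilon^{v_l}(Z)$ is exactly $U(\{v_1,\cdots,v_l\},Z,Z')$.

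Concretely I would proceed as follows. First, substitute (\ref{invert}) into (\ref{S}): writing $v=v_1+\cdots+v_m$ as in (\ref{S}) with coefficient $S(\{v_i\},Z,Z')$, and then for each $v_i$ expanding $\delta^{v_i}(Z)$ as a sum over compositions $v_i=\sum_{j\in\psi^{-1}(i)}u_j$ with all $u_j$ of equal $Z$-slope and coefficient $1/|\psi^{-1}(i)|!$; this yields $\delta^v(Z')$ as a sum over ordered tuples $(u_1,\cdots,u_l)$ (refining the $(v_i)$ via a monotone surjection $\psi\colon\{1,\cdots,l\}\to\{1,\cdots,m\}$ with the slope condition of $(\diamondsuit)$ on $\psi$-fibers) with coefficient $\prod_i S(\{u_k\}_{k\in\psi^{-1}(i)},Z,Z')\cdot\prod_i 1/|\psi^{-1}(i)|!$. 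Second, feed this expansion of $\delta(Z')$ into the logarithm (\ref{eps}) for $\epsilon^v(Z')$: the outer sum $\sum_{m'}(-1)^{m'-1}/m'$ over ways to split $v$ into $m'$ pieces of equal $Z'$-slope introduces a further monotone surjection $\xi\colon\{1,\cdots,m\}\to\{1,\cdots,m'\}$ (the $Z'$-slope condition in $(\diamondsuit)$ applied to $\psi^{-1}\xi^{-1}$-fibers), and matching the products precisely reproduces the combinatorial sum (\ref{U}) defining $U$. Third, one must check that all the rearrangements of these formal sums are legitimate — that under $(\spadesuit)$ each intermediate sum converges in the sense of \cite[Definition 2.16]{Joy4} and that only finitely many terms contribute to any fixed graded piece — which is exactly the content already packaged in Theorems~\ref{thm:delta} and \ref{Joy1} together with the local-finiteness hypotheses. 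Finally, invoke (\ref{Liesub}): since each $\epsilon^{v_i}(Z)\in\mathfrak G(\aA)$ and $\epsilon^v(Z')\in\mathfrak G(\aA)$, the identity holds inside the Ringel--Hall Lie algebra, which is where it is needed.

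The main obstacle is the pure combinatorics of step two: verifying that the coefficient obtained from the triple nesting — the $S$-coefficients from (\ref{S}), the $1/|\psi^{-1}(b)|!$ factors from inverting (\ref{eps}) at slope $Z$, and the $(-1)^{m'}/m'$ from the logarithm at slope $Z'$ — collapses into precisely the expression (\ref{U}) with precisely the constraint $(\diamondsuit)$. This is a nontrivial identity of rational numbers indexed by pairs of monotone surjections, and keeping careful track of which slope constraints ($Z$-equality on $\psi$-fibers versus $Z'$-equality on $\psi^{-1}\xi^{-1}$-fibers) come from which stage is delicate. Everything else — associativity of $\ast$, the grading by $\overline C(\aA)$, the convergence statements, and membership in $\mathfrak G(\aA)$ — is already available from the results recalled above, so the proof is genuinely a matter of organizing this bookkeeping, and I would present it by reducing to \cite[Theorem~5.2]{Joy4} after establishing the combinatorial identity as a separate lemma.
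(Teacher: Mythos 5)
Your proposal is correct and follows exactly the route the paper indicates: the paper gives no proof of its own but states that the formula ``is deduced from (\ref{eps}), (\ref{S}), and inverting (\ref{eps})'' and defers to \cite[Theorem~5.2]{Joy4}, whose argument is precisely the triple substitution and coefficient-matching you describe. The only caveat is that you inherit the paper's quoted coefficient $(-1)^{m'}/m'$ in (\ref{U}), whereas the logarithm (\ref{eps}) produces $(-1)^{m'-1}/m'$; this sign discrepancy lies in the transcription of Joyce's Definition~4.4, not in your argument.
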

\begin{rmk}\emph{
It is possible to rewrite (\ref{TransU}) by a $\mathbb{Q}$-linear 
combination of multiple commutators of $\epsilon^{v_i}(Z)$ such as 
$$[[\cdots[[\epsilon^{v_1}(Z), \epsilon^{v_2}(Z)], \epsilon^{v_3}(Z)],
 \cdots], 
\epsilon^{v_l}(Z)],$$
so (\ref{TransU}) is an equality in $\mathfrak{G}(\aA)$, 
rather than in $\hH(\aA)$. 
The proof of this fact is given in~\cite[Theorem~5.4]{Joy4}.}
\end{rmk}

\subsection{Motivic invariants of stacks}\label{Motivic}
As a final step, we integrate the elements 
$\epsilon^{v}(Z)\in \mathfrak{G}^{v}(\aA)$
to give $\mathbb{Q}$-valued invariants, and 
establish the transformation formula of these invariants. Let us 
recall that a motivic invariant is a ring homomorphism
$$\Upsilon \colon K_0(\Var/\Spec\mathbb{C})\lr \Lambda,$$
where $\Lambda$ is a $\mathbb{Q}$-algebra and a
ring structure on $K_{0}(\Var/\Spec\mathbb{C})$ is given 
by (\ref{ring}).
In order to simplify the arguments,
 we only consider the special case that 
$\Lambda=\mathbb{Q}(t)$ and 
$$\Upsilon([Y])=\sum_{i}(-1)^i \dim H^i(Y, \mathbb{C})t^i, $$
where $Y$ is a smooth projective variety. 
Since $K_0(\Var/\Spec\mathbb{C})$ is generated by $[Y]$ for
smooth projective varieties $Y$, the above data 
uniquely determines $\Upsilon$. 
In this situation, there is a unique extension of $\Upsilon$, 
$$\Upsilon' \colon K_0(\mathrm{St}/\Spec\mathbb{C}) \lr \mathbb{Q}(t), $$
such that if $G$ is a special algebraic group acting on 
$Y$, we have (cf.~\cite[Theorem~4.9]{Joy5}) 
$$\Upsilon'([Y/G])=\Upsilon([Y])/\Upsilon([G]).$$
Here an algebraic group is \textit{special} if every principle
$G$-bundle is locally trivial in Zariski topology.  
In what follows, we assume 
that $\aA$ satisfies the following condition. 
\begin{align*}
(\star): &\text{ there is an anti-symmetric biadditive-paring }
\chi \colon \nN(\aA)\times \nN(\aA) \to \mathbb{Z} \\
&\text{ such that for any }E, F\in \aA, \text{ we have } \\
& \quad \chi(E, F)=\dim \Hom(E, F)- \dim \Ext^1(E, F)+ \\
& \qquad \qquad \qquad \qquad \qquad \qquad 
\dim \Ext^1(F, E)-\dim \Hom(F, E).
\end{align*}
For instance if $\aA=\Coh(X)$
for a smooth projective Calabi-Yau 3-fold $X$, 
the usual Euler pairing (\ref{chi}) 
descends to the pairing on $\nN(\aA)$, 
which satisfies $(\star)$ by Serre duality. 
Using the pairing $\chi$, we can define the following 
Lie algebra. 
\begin{defi}\emph{
For an abelian category $\aA$ satisfying $(\star)$,
we define the Lie algebra $\mathfrak{g}(\aA)$ to be the  
$\mathbb{Q}$-vector space, 
$$\mathfrak{g}(\aA)\cneq \bigoplus_{v\in \nN(\aA)}\mathbb{Q}c_v, $$
with its Lie-brackets given by $[c_{v}, c_{v'}]=\chi(v, v')c_{v+v'}$.}
\end{defi}
Let $\Pi_v$ be the composition, 
$$\Pi_v \colon \mathfrak{G}^v(\aA) \subset \hH(\aA)
 \stackrel{\pi_{\ast}}{\lr} K_0(\mathrm{St}/\Spec\mathbb{C}) 
\stackrel{\Upsilon'}{\lr} \mathbb{Q}(t), $$
where the map $\pi_{\ast}$
sends $[(\xX, \rho)]$ to $[(\xX, \pi \circ \rho)]$ 
and $\pi \colon \mathfrak{Obj}(\aA) \to \Spec\mathbb{C}$ is 
the structure morphism. 
It is shown in~\cite[Section~6.2]{Joy4} that for $\epsilon
\in \mathfrak{G}^v(\aA)$, 
the rational function 
$\Pi_v(\epsilon) \in \mathbb{Q}(t)$ has a pole at $t=1$ at most 
order one. Hence the following definition makes sense, 
\begin{align}\label{sense}
\Theta_v(\epsilon)=(t^2-1)\Pi_v(\epsilon)|_{t=1} \in \mathbb{Q}.
\end{align}
\begin{defi}\emph{We define the invariant $J^v(Z)\in \mathbb{Q}$
by 
$$J^v(Z)=\Theta_v(\epsilon^{v}(Z))\in \mathbb{Q}.$$
}
\end{defi}
\begin{rmk}\label{rmk:eu}
\emph{
If all the $Z$-semistable objects 
$E\in \aA$ with $[E]=v$
 are in fact $Z$-stable, and their moduli problem
is represented by a scheme, then  
$\epsilon^v(Z)$ is written as $[M^v(Z)/\mathbb{G}_m]$
for a scheme $M^v(Z)$. 
Here $\mathbb{G}_m$ is acting on $M^v(Z)$ trivially. 
In this case $J^v(Z)$  
equals to the Euler characteristic of $M^v(Z)$. Note that the factor 
$\Upsilon([\mathbb{G}_m])=t^2-1$
 in (\ref{sense}) is required to cancel out the contribution
of the stabilizer group $\Aut(E)\cong \mathbb{G}_m$.} 
\end{rmk}
We have the following theorem. 
\begin{thm}\emph{\bf{\cite[Theorem~6.12]{Joy2}}}\label{Lie}
The map, 
\begin{align}\label{Psi}
\Theta \colon \mathfrak{G}(\aA)=\bigoplus_{v\in \overline{C}(\aA)}
 \mathfrak{G}^{v}(\aA)
 \ni \{ \epsilon^v \}_{v} \longmapsto 
\sum_{v\in \overline{C}(\aA)} \Theta_v(\epsilon^v)c_v \in \mathfrak{g}(\aA), 
\end{align}
is a Lie algebra homomorphism.
\end{thm}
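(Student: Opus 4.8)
The plan is to reduce the statement to an explicit computation with the Ringel--Hall $\ast$-product. Since $\Theta$ is by construction $\mathbb{Q}$-linear, carries $\mathfrak{G}^v(\aA)$ into $\mathbb{Q}c_v$, and since $\mathfrak{G}(\aA)$ is closed under the $\ast$-commutator, it suffices to prove, for homogeneous $f_i \in \mathfrak{G}^{v_i}(\aA)$,
$$\Theta_{v_1+v_2}(f_1 \ast f_2 - f_2 \ast f_1) = \chi(v_1, v_2)\,\Theta_{v_1}(f_1)\,\Theta_{v_2}(f_2).$$
The right-hand side is bilinear in $(f_1, f_2)$ and sees only the scalars $\Theta_{v_i}(f_i)$, so, invoking Joyce's description of the space of stack functions supported on virtual indecomposables, I would reduce to the case that each $f_i$ has the shape $[([U_i/\mathbb{G}_m], \rho_i)]$ for a quasi-projective variety $U_i$ with trivial $\mathbb{G}_m$-action, where $\rho_i \colon U_i \to \mathfrak{Obj}^{v_i}(\aA)$ is a family of objects with automorphism group $\mathbb{G}_m$; the generators with non-trivial torus stabilizers $\mathbb{G}_m^k$ are then absorbed by the same computation together with Joyce's combinatorial identities relating $\mathrm{SF}_{\mathrm{al}}$ and $\mathrm{SF}_{\mathrm{al}}^{\mathrm{ind}}$. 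For such $f_i$ one has $\Theta_{v_i}(f_i) = \Upsilon([U_i])|_{t=1} = e(U_i)$, the topological Euler characteristic.

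The core step is a stack-theoretic formula for $f_1 \ast f_2$ on these generators. By definition $f_1 \ast f_2$ is represented by the stack $\mathfrak{Z}$ of short exact sequences $0 \to E_1 \to E \to E_2 \to 0$ with $[E_i]$ in the image of $\rho_i$, mapped to $\mathfrak{Obj}(\aA)$ by $E \mapsto E$. The forgetful morphism $\mathfrak{Z} \to [U_1/\mathbb{G}_m] \times [U_2/\mathbb{G}_m]$ has fibre over $(u_1, u_2)$ the stack of extensions of $E_{u_2}$ by $E_{u_1}$, i.e. the quotient $[\Ext^1(E_{u_2}, E_{u_1})/\Hom(E_{u_2}, E_{u_1})]$ with the additive group $\Hom(E_{u_2}, E_{u_1})$ acting trivially. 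Stratifying $U_1 \times U_2$ into the constructible locally closed loci $S_{a,b}$ on which $\dim\Hom(E_{u_2}, E_{u_1}) = a$ and $\dim\Ext^1(E_{u_2}, E_{u_1}) = b$, and pushing forward to $\Spec\mathbb{C}$ via $\Upsilon' \circ \pi_\ast$ (which extends $\Pi_{v_1+v_2}$ from $\mathfrak{G}^{v_1+v_2}(\aA)$ to all of $\hH^{v_1+v_2}(\aA)$), I would obtain
$$\Pi_{v_1+v_2}(f_1 \ast f_2) = \frac{1}{(t^2-1)^2}\sum_{a, b}\Upsilon([S_{a,b}])\,t^{2(b-a)} \in \mathbb{Q}(t),$$
which a priori has a pole of order two at $t = 1$.

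Writing $u = t^2 - 1$ and expanding $t^{2(b-a)} = (1+u)^{b-a}$, the coefficients of $u^{-2}$ and $u^{-1}$ in $\Pi_{v_1+v_2}(f_1 \ast f_2)$ are determined by $\sum_{a,b}\Upsilon([S_{a,b}]) = \Upsilon([U_1 \times U_2])$ and its derivative at $u = 0$; these are symmetric under $E_1 \leftrightarrow E_2$, so they cancel in $\Pi_{v_1+v_2}(f_1 \ast f_2) - \Pi_{v_1+v_2}(f_2 \ast f_1)$. In particular the difference has at most a simple pole at $t = 1$ — consistent with $[f_1, f_2] \in \mathfrak{G}^{v_1+v_2}(\aA)$ — and one would be left with
$$\Theta_{v_1+v_2}([f_1, f_2]) = \sum_{a,b}(b - a)\,e(S_{a,b}^{12}) - \sum_{a,b}(b - a)\,e(S_{a,b}^{21}),$$
where $S^{12}$ uses $\Hom(E_{u_2}, E_{u_1}), \Ext^1(E_{u_2}, E_{u_1})$ and $S^{21}$ uses $\Hom(E_{u_1}, E_{u_2}), \Ext^1(E_{u_1}, E_{u_2})$. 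Reading the two sums as integrals over $U_1 \times U_2$ of the constructible functions $(u_1, u_2) \mapsto \dim\Ext^1(E_{u_2}, E_{u_1}) - \dim\Hom(E_{u_2}, E_{u_1})$ and the same with arguments swapped, their difference is the integral of
$$\bigl(\dim\Hom(E_{u_1}, E_{u_2}) - \dim\Ext^1(E_{u_1}, E_{u_2})\bigr) - \bigl(\dim\Hom(E_{u_2}, E_{u_1}) - \dim\Ext^1(E_{u_2}, E_{u_1})\bigr),$$
which by assumption $(\star)$ equals the constant $\chi(v_1, v_2)$. Hence $\Theta_{v_1+v_2}([f_1, f_2]) = \chi(v_1, v_2)\,e(U_1 \times U_2) = \chi(v_1, v_2)\,e(U_1)\,e(U_2) = \chi(v_1, v_2)\,\Theta_{v_1}(f_1)\,\Theta_{v_2}(f_2)$, and since $[\Theta(f_1), \Theta(f_2)] = \Theta_{v_1}(f_1)\Theta_{v_2}(f_2)\chi(v_1, v_2)c_{v_1+v_2}$, this is the asserted equality.

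I expect the main obstacle to lie not in this Euler-characteristic computation but in the reduction of the first paragraph: one must establish, inside Joyce's formalism, that it is legitimate to test the Lie bracket on the $\mathbb{G}_m$-stabilizer generators, equivalently that $\Theta_v$ ``sees only'' the $\mathbb{G}_m$-stabilizer strata of an element of $\mathfrak{G}^v(\aA)$. This is precisely where the delicate definition of $\mathrm{SF}_{\mathrm{al}}^{\mathrm{ind}}(\mathfrak{Obj}(\aA))$ via virtual indecomposables and the associated projection operators enters, together with the finiteness, constructibility and affine-stabilizer hypotheses that make the strata $S_{a,b}$ and the pushforwards well defined in $K_0(\mathrm{St}/\Spec\mathbb{C})$.
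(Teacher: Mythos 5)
The paper does not prove this statement: it is quoted verbatim as \cite[Theorem~6.12]{Joy2}, so there is no internal argument to compare against. Judged on its own terms, your sketch correctly reproduces what is in fact the computational heart of Joyce's proof. The description of the fibre of $\mathfrak{Ex}(\aA)\to\mathfrak{Obj}(\aA)\times\mathfrak{Obj}(\aA)$ as $[\Ext^1(E_{u_2},E_{u_1})/\Hom(E_{u_2},E_{u_1})]$, the resulting formula $\Pi_{v_1+v_2}(f_1\ast f_2)=(t^2-1)^{-2}\sum_{a,b}\Upsilon([S_{a,b}])t^{2(b-a)}$, the cancellation of the $u^{-2}$ and symmetric $u^{-1}$ terms in the commutator, and the identification of the surviving residue with $\int_{U_1\times U_2}\chi(v_1,v_2)\,de$ via the hypothesis $(\star)$ are all correct, and they do yield $\Theta_{v_1+v_2}([f_1,f_2])=\chi(v_1,v_2)\Theta_{v_1}(f_1)\Theta_{v_2}(f_2)$ on generators of the form $[([U_i/\mathbb{G}_m],\rho_i)]$ with trivial $\mathbb{G}_m$-action.

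The genuine gap is the reduction in your first paragraph, and it is larger than you allow. Bilinearity only reduces the identity to a spanning set of $\mathfrak{G}^{v_i}(\aA)=\mathrm{SF}_{\mathrm{al}}^{\mathrm{ind}}(\mathfrak{Obj}(\aA))$, and that space is \emph{not} spanned by elements with $\mathbb{G}_m$-stabilizers alone: a general virtual indecomposable is a specific rational combination of terms $[([U/\mathbb{G}_m^k],\rho)]$ with $k\ge 2$, produced by Joyce's projection operators. On such a term $\Pi_v$ has a pole of order $k$ at $t=1$, so the very definition of $\Theta_v$ by formula (\ref{sense}) only makes sense after these higher-order poles cancel across the combination, and the same cancellation must be tracked through the $\ast$-product before one can isolate a residue and compare it with $\chi(v_1,v_2)$. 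Saying the higher-torus generators are ``absorbed by the same computation together with Joyce's combinatorial identities'' names the difficulty without resolving it; that absorption \emph{is} the content of Joyce's Theorem 6.12 (his Sections 6.2--6.4), and it is precisely why the theorem is stated for $\mathrm{SF}_{\mathrm{al}}^{\mathrm{ind}}$ rather than for all of $\hH(\aA)$, where the analogous statement is false. So your proposal is a correct and well-aimed account of the rank-one computation, but as a proof it still rests on the same external input that the paper itself cites.
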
 
Since (\ref{TransU}) is a relationship in the Lie algebra $\mathfrak{G}(\aA)$, 
we can obtain the relationship between $J^v(Z')$ and $J^v(Z)$ by 
applying $\Theta$. The result is as follows. 
\begin{thm}\emph{\bf{\cite[Theorem~6.28, Equation (130)]{Joy4}}}
\label{prop:trans}
In the situation of $(\spadesuit)$, assume that 
there are only finitely many terms in (\ref{TransU}). 
Applying $\Theta$ to (\ref{TransU}) yields the formula, 
\begin{align}
\notag
J^v(Z')=&\sum_{\begin{subarray}{c}l\ge 1, \ v_i \in C(\aA), \\
v_1+\cdots +v_l=v\end{subarray}}
\sum _{\begin{subarray}{c}
\Gamma \text{ \rm{is a connected, simply connected}} \\
\text{\rm{graph with vertex} }\{1, \cdots, l\}, \
\stackrel{i}{\bullet}\to \stackrel{j}{\bullet}\text{ \rm{implies} }
i< j
\end{subarray}} \\
\label{Trans2}
&\qquad \frac{1}{2^{l-1}}U(\{v_1, \cdots, v_l\}, Z, Z')
\prod_{\stackrel{i}{\bullet} \to \stackrel{j}{\bullet}\text{ \rm{in} }\Gamma}
\chi(v_i, v_j)\prod_{i=1}^{l}J^{v_i}(Z).
\end{align}
\end{thm}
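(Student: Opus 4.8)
The plan is to reduce the statement to the purely Lie-algebraic content of the transformation formula together with the fact that $\Theta$ is a Lie algebra homomorphism. First I would invoke the remark following~(\ref{TransU}), proved in~\cite[Theorem~5.4]{Joy4}: although each individual term $\epsilon^{v_1}(Z)\ast\cdots\ast\epsilon^{v_l}(Z)$ lies only in $\hH(\aA)$, the $U$-weighted sum on the right-hand side of~(\ref{TransU}) can be rewritten as a $\mathbb{Q}$-linear combination of iterated $\ast$-commutators of the $\epsilon^{v_i}(Z)$, so that~(\ref{TransU}) is genuinely an identity in the Ringel-Hall Lie algebra $\mathfrak{G}(\aA)$. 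Since the sum is assumed finite, I can then apply the Lie algebra homomorphism $\Theta$ of Theorem~\ref{Lie} term by term to this commutator expression; by the definition of $J^{v_i}(Z)$ one has $\Theta(\epsilon^{v_i}(Z))=J^{v_i}(Z)\,c_{v_i}$, so $\Theta$ turns every iterated $\ast$-commutator of the $\epsilon^{v_i}(Z)$ into the corresponding iterated Lie bracket of $c_{v_1},\ldots,c_{v_l}$ in $\mathfrak{g}(\aA)$, multiplied by $\prod_{i=1}^{l}J^{v_i}(Z)$.

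Next I would evaluate these iterated brackets in the explicit Lie algebra $\mathfrak{g}(\aA)$. Because $[c_v,c_{v'}]=\chi(v,v')\,c_{v+v'}$ and $\chi$ is biadditive, any bracketing of $c_{v_1},\ldots,c_{v_l}$ lands in the one-dimensional subspace $\mathbb{Q}\,c_{v_1+\cdots+v_l}$ and equals a signed sum, over the trees on $\{1,\ldots,l\}$ compatible with that bracketing, of $\prod_{\text{edges }(i,j)}\chi(v_i,v_j)$. Substituting this back and extracting the coefficient of $c_v$ with $v=v_1+\cdots+v_l$, the left-hand side $\Theta(\epsilon^{v}(Z'))=J^v(Z')\,c_v$ of the transformed equation becomes a sum over $l$ and ordered decompositions $v_1+\cdots+v_l=v$ of $\prod_i J^{v_i}(Z)$ times a purely combinatorial quantity built from $U(\{v_1,\ldots,v_l\},Z,Z')$, the commutator-rewriting coefficients of~\cite[Theorem~5.4]{Joy4}, and the tree sums above.

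The remaining, decisive step is to show that this combinatorial quantity collapses precisely to the right-hand side of~(\ref{Trans2}): a sum over connected, simply connected graphs $\Gamma$ on $\{1,\ldots,l\}$ with each edge oriented $\stackrel{i}{\bullet}\to\stackrel{j}{\bullet}$ for $i<j$, weighted by $\frac{1}{2^{l-1}}U(\{v_1,\ldots,v_l\},Z,Z')\prod_{\stackrel{i}{\bullet}\to\stackrel{j}{\bullet}\text{ in }\Gamma}\chi(v_i,v_j)$. This is exactly~\cite[Theorem~6.28, Equation~(130)]{Joy4}, and its mechanism is a symmetrization identity for iterated brackets with structure constants of the form $\chi(v_i,v_j)\,c_{v_i+v_j}$: the various bracketings and permutations occurring in the commutator rewriting, once paired against the antisymmetric $\chi$, symmetrize so that all dependence collapses onto the underlying tree, the signs being absorbed by the antisymmetry of $\chi$, and the universal factor $2^{1-l}$ arising because passing from an unoriented tree on $l$ vertices to the canonical orientation $i\to j$ ($i<j$) discards the $2^{l-1}$ choices of orientation of its $l-1$ edges.

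The hard part will be this last bookkeeping: controlling the interaction of the three independent layers of combinatorics (the $S$-coefficients inside $U$ of Definition~\ref{defi:U}, the rewriting of the $U$-weighted $\ast$-products into iterated brackets, and the tree expansion of brackets in $\mathfrak{g}(\aA)$) and verifying that they telescope down to the clean graph sum with the constant $2^{1-l}$. I would not reprove this from scratch but cite~\cite[Theorems~5.4 and~6.28]{Joy4}; a self-contained treatment would establish the commutator rewriting first, then isolate and prove the symmetrization lemma, then assemble, noting throughout that the finiteness assumption on~(\ref{TransU}) makes all of this a manipulation of finite sums with no convergence issues.
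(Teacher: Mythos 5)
Your proposal is correct and follows essentially the same route as the paper, which states this theorem purely as a citation of \cite[Theorem~6.28, Equation (130)]{Joy4}: the paper's own justification is exactly that (\ref{TransU}) is an identity in $\mathfrak{G}(\aA)$ by the commutator-rewriting of \cite[Theorem~5.4]{Joy4}, that $\Theta$ is a Lie algebra homomorphism by Theorem~\ref{Lie}, and that the resulting bracket combinatorics collapse to the graph sum as in Joyce. Your additional detail on evaluating iterated brackets in $\mathfrak{g}(\aA)$ via biadditivity of $\chi$ and the tree expansion is accurate, and deferring the final symmetrization to \cite[Theorem~6.28]{Joy4} is precisely what the paper does.
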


\subsection{Generalization to quasi-abelian categories}\label{sub:quasi}
For our purpose it is useful to give a slight generalization of
Theorem~\ref{prop:trans}, especially we want to relax 
Assumption~\ref{assum}, as our abelian category $\aA^p$
which will be introduced in the next section does not 
satisfy that assumption.  
Let $\aA$ be an abelian category and 
$Z\colon C(\aA) \to T$ a weak stability condition. 
Here we do not assume Assumption~\ref{assum}. 
For $t\in T$, we set 
\begin{align*}
\aA_{Z\succeq t}&=\langle E \mid E\mbox{ is }Z\mbox{-semistable with }
Z(E) \succeq t \rangle, \\
\aA_{Z\prec t}&=\langle E \mid E\mbox{ is }Z\mbox{-semistable with }
Z(E) \prec t \rangle.
\end{align*}
Here for a set of objects $S$ in $\aA$, we denote by 
$\langle S \rangle \subset \aA$ the smallest extension closed 
subcategory of $\aA$ which contains $S$. 
Equivalently, an object $E\in \aA$ is contained in $\aA_{Z\succeq t}$ 
(resp. $\aA_{Z\prec t}$)
if and only if any $Z$-semistable factor $F$ of $E$ 
satisfies $Z(F)\succeq t$, (resp. $Z(F)\prec t$.)
It can be shown that $\aA_{Z\succeq t}$, 
$\aA_{Z\prec t}$ are quasi-abelian categories. 
See~\cite[Section~4]{Brs1}
for the detail on quasi-abelian categories. 
\begin{defi}\label{strict}\emph{
For objects $E, F\in \aA_{Z\prec t}$ and a morphism  
$f\colon E\to F$, it is called a \textit{strict monomorphism}
if $f$ is injective in $\aA$ and $\Cok(f) \in \aA_{Z\prec t}$. 
Similarly $f$ is called a \textit{strict epimorphism} if 
$f$ is surjective in $\aA$ and $\Ker(f) \in \aA_{Z\prec t}$.} 
\end{defi}
For $v\in C(\aA_{Z\prec t})$, we set 
\begin{align}\label{Cle}
C_{\le v}(\aA_{Z\prec t})=\{ v'\in C(\aA_{Z\prec t}) \mid 
\mbox{ there is }v'' \in C(\aA_{Z\prec t}) \mbox{ with }
v'+v''=v\}.
\end{align}
For $Z, t, v$ as above, we
put the following 
assumption instead of Assumption~\ref{assum}. 
\begin{assum}\label{assum2}
\begin{itemize}
\item The category $\aA_{Z\prec t}$ is noetherian and artinian with respect
to strict monomorphisms. 
\item There is an Artin stack of locally finite 
type $\mathfrak{Obj}(\aA)$, which parameterizes objects $E\in \aA$. 
\item For any $v' \in C_{\le v}(\aA_{Z\prec t})$, the substack 
$\mathfrak{M}^{v'}(Z)\subset \mathfrak{Obj}(\aA)$ is an 
open substack, and it is of finite type. 
\end{itemize}
\end{assum}
We also modify the dominant conditions. 
\begin{defi}\emph{
Let $Z, Z' \colon C(\aA) \to T$ be  
weak stability conditions. For 
$t\in T$ and 
$v\in C(\aA_{Z\prec t})$, we say 
$Z'$ \textit{dominates} $Z$ \textit{with respect to} $(v, t)$ if 
the following holds.}
\begin{itemize}
\item \emph{We have $\aA_{Z\succeq t}=\aA_{Z'\succeq t}$ and 
$\aA_{Z\prec t}=\aA_{Z'\prec t}$.} 
\item \emph{For $v_1, v_2 \in C_{\le v}(\aA_{Z\prec t})$, 
if $Z(v_1)\succeq Z(v_2)$
then $Z'(v_1)\succeq Z'(v_2)$. }
\end{itemize}
\end{defi}
For two weak stability conditions $Z$, $Z'$, we consider the 
following situation. 
\begin{align*}
(\spadesuit') : &\text{ there are 
weak stability conditions }
Z=Z_1, Z_2, \cdots, Z_{m}=Z', 
W_1, \cdots, W_{m-1} 
\text{ such that}\\
& \ W_i \text{ dominates }
Z_i, Z_{i+1} \text{ w.r.t. }(v, t), \text{ all changes from } 
 Z_i \text{ to } W_i, W_{i-1} \text{ are locally finite,}\\
 & \text{ and } Z_i, W_i \text{ satisfy Assumption~\ref{assum2} for }
 (v, t).
\end{align*}
We have the following generalization of Theorem~\ref{prop:trans}.
\begin{thm}\label{thm:gen}
For weak stability 
conditions $Z$, $Z'$ on $\aA$, $t\in T$ and $v\in C(\aA_{Z\prec t})$, 
suppose that the condition $(\spadesuit')$
holds. Then the equation (\ref{TransU}) with each
$v_i \in C(\aA_{Z\prec t})$ holds. If there are only 
finitely many terms in (\ref{TransU}) with $v_i \in C(\aA_{Z\prec t})$, 
then (\ref{Trans2}) holds, with 
each $v_i \in C(\aA_{Z\prec t})$. 
\end{thm}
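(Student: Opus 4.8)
The plan is to show that the whole chain of results behind Theorem~\ref{prop:trans} --- Theorems~\ref{thm:delta} and~\ref{Joy1}, the inversion identities (\ref{delta2}) and (\ref{invert}), the transformation formula (\ref{TransU}), and the Lie-algebra homomorphism $\Theta$ of Theorem~\ref{Lie} --- remains valid when Assumption~\ref{assum} is replaced by the local Assumption~\ref{assum2}, because for a fixed $v\in C(\aA_{Z\prec t})$ every object, exact sequence, class, and moduli substack that enters the wall-crossing formula already lives inside the quasi-abelian subcategory $\aA_{Z\prec t}$, and on that range Assumption~\ref{assum2} supplies exactly the finiteness that Assumption~\ref{assum} was used for.

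First I would set up the Ringel--Hall formalism locally. For $v'\in C_{\le v}(\aA_{Z\prec t})$ one has $Z(v')\prec t$, so a $Z$-semistable object of class $v'$ automatically lies in $\aA_{Z\prec t}$, and by Assumption~\ref{assum2} the moduli substack $\mathfrak{M}^{v'}(Z)\subset\mathfrak{Obj}(\aA)$ is open and of finite type; hence $\delta^{v'}(Z)\in\hH(\aA)$, and likewise each $\delta^{v'}(W_i)$, is defined exactly as in Section~\ref{Count}. Since the classes occurring in (\ref{eps}) for such a $v'$ again lie in $C_{\le v}(\aA_{Z\prec t})$, that sum is finite and $\epsilon^{v'}(Z)$ is well defined. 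The crucial closure property is that $\aA_{Z\prec t}$ is extension-closed: an exact sequence $0\to E_1\to E_2\to E_3\to 0$ in $\aA$ with $[E_i]\in C_{\le v}(\aA_{Z\prec t})$ and $E_1,E_3\in\aA_{Z\prec t}$ has $E_2\in\aA_{Z\prec t}$, and these are precisely the strict exact sequences of Definition~\ref{strict}. Thus every $\ast$-product $\delta^{v_1}(Z)\ast\cdots\ast\delta^{v_l}(Z)$ with $v_1+\cdots+v_l=v$ is supported on $\aA_{Z\prec t}$ and involves only classes in $C_{\le v}(\aA_{Z\prec t})$.

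Next I would check the two structural inputs of Theorem~\ref{thm:delta} inside $\aA_{Z\prec t}$. Because $\aA_{Z\prec t}$ is noetherian and artinian with respect to strict monomorphisms, Harder--Narasimhan filtrations with respect to $Z$ exist and are unique, by the argument of~\cite[Theorem~2]{Ruda} with ``subobject'' replaced by ``strict subobject''; and the comparison of $Z$- and $Z'$-semistability in the proof of Theorem~\ref{thm:delta} uses only that the classes of the HN factors lie in $C_{\le v}(\aA_{Z\prec t})$, together with $\aA_{Z\prec t}=\aA_{Z'\prec t}$ and the dominance of $Z'$ over $Z$ with respect to $(v,t)$ --- which are exactly the hypotheses gathered in $(\spadesuit')$. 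Hence (\ref{delta}) and its inverse (\ref{delta2}) hold for $v\in C(\aA_{Z\prec t})$ with all intermediate classes in $C_{\le v}(\aA_{Z\prec t})$; the successive application producing (\ref{S}), and the passage to the $\epsilon$-version through (\ref{invert}) and the combinatorial coefficients $S(\cdot)$, $U(\cdot)$, are purely formal and never leave $C_{\le v}(\aA_{Z\prec t})$. This yields (\ref{TransU}) with each $v_i\in C(\aA_{Z\prec t})$.

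Finally, for the integrated statement I would observe that $\epsilon^{v}(Z)$ still lies in the Ringel--Hall Lie subalgebra $\mathfrak{G}^v(\aA)\subset\hH(\aA)$, since this is a property of a stack function on the unchanged stack $\mathfrak{Obj}(\aA)$ and is proved as in~\cite[Theorem~8.7]{Joy3} once the finitely many $\delta^{v'}(Z)$ in (\ref{eps}) are available. Assuming (\ref{TransU}) has only finitely many terms with $v_i\in C(\aA_{Z\prec t})$, one rewrites its right-hand side as a $\mathbb{Q}$-linear combination of iterated commutators of the $\epsilon^{v_i}(Z)$ and applies the Lie algebra homomorphism $\Theta$ of Theorem~\ref{Lie}, which sends $\epsilon^{v}(Z)$ to $J^v(Z)c_v$ and $[c_{v_i},c_{v_j}]$ to $\chi(v_i,v_j)c_{v_i+v_j}$; expanding the commutators in the basis $\{c_v\}$ and collecting the connected-and-simply-connected-graph combinatorics exactly as in~\cite[Theorem~6.28]{Joy4} produces (\ref{Trans2}) with each $v_i\in C(\aA_{Z\prec t})$. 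The main obstacle here is not any single computation but the bookkeeping needed to certify that no step in Joyce's proofs of Theorems~\ref{Joy1} and~\ref{prop:trans} is ever invoked for a class outside $C_{\le v}(\aA_{Z\prec t})$, so that the failure of Assumption~\ref{assum} for $\aA$ does no harm; the cleanest way to organize this is to check once that $\aA_{Z\prec t}$ with its strict exact structure, together with the finitely many finite-type substacks $\{\mathfrak{M}^{v'}(Z)\}_{v'\in C_{\le v}(\aA_{Z\prec t})}$ and their $W_i$-analogues, satisfies Joyce's hypotheses ``up to the class $v$'', after which Theorems~\ref{thm:delta}--\ref{prop:trans} apply mechanically.
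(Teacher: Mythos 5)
Your proposal is correct and follows essentially the same route as the paper: one verifies that every Harder--Narasimhan factor and every intermediate class occurring in Joyce's formulas stays inside $C_{\le v}(\aA_{Z\prec t})$, so that Assumption~\ref{assum2} together with dominance with respect to $(v,t)$ and the equality $\aA_{Z\prec t}=\aA_{Z'\prec t}$ supplies exactly what Assumption~\ref{assum} was used for, after which Joyce's proofs of (\ref{delta})--(\ref{TransU}) and the application of $\Theta$ go through unchanged. Your write-up is somewhat more explicit than the paper's (extension-closure of $\aA_{Z\prec t}$, local HN existence, the final graph combinatorics), but the underlying argument is the same.
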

\begin{proof}
First suppose that $Z'$ dominates $Z$ with respect to $(v, t)$, 
and check that (\ref{delta}) holds with each
$v_i \in C(\aA_{Z\prec t})$. Let $E\in \aA$ be 
$Z'$-semistable with $[E]=v$. As in the proof of Theorem~\ref{thm:delta}, 
we have a unique filtration (\ref{conv}). 
Let $F_i=E_i/E_{i-1}$, $v_i=[F_i]\in C(\aA)$. 
Since $v\in C(\aA_{Z\prec t})=C(\aA_{Z'\prec t})$, we have 
$E\in \aA_{Z'\prec t}=\aA_{Z\prec t}$. 
Hence we have 
$Z(v_i) \prec t$, and $v_i \in C(\aA_{Z\prec t})$ follows. 

Conversely given a filtration (\ref{conv}), suppose that 
each $F_i$ is $Z$-semistable with $v_i \in C(\aA_{Z\prec t})$. 
Then $F_i \in \aA_{Z\prec t}=\aA_{Z'\prec t}$, hence 
$F_i$ is also $Z'$-semistable as $Z'$ dominates $Z$ w.r.t. 
$(v, t)$.

 As a summary, an object $E\in \aA_{Z\prec t}$ is 
$Z$-semistable if and only if there is a filtration (\ref{conv}), 
satisfying (\ref{satisfy}) with each $v_i \in C(\aA_{Z\prec t})$. 
Then the same proof of~\cite[Theorem~5.11]{Joy4}
works and gives the formula (\ref{delta}) with each 
$v_i \in C(\aA_{Z\prec t})$. 
Note that to state the formula (\ref{delta}), it
is enough to assume that $\mathfrak{M}^{v'}(Z)\subset \mathfrak{Obj}(\aA)$
is open and of finite type for any $v' \in C_{\le v}(\aA_{Z\prec t})$. 

By the same idea, we can also show the formulas
 (\ref{delta2}), (\ref{S}), (\ref{invert}), (\ref{TransU})
 hold with each $v_i \in C(\aA_{Z\prec t})$. 
 We leave the readers to follow Joyce's work
 and that the same proofs are applied in this case. 
\end{proof}

\section{Limit stability and $\mu$-limit stability}\label{sec:lim}
In this section, we recall the notion of limit 
stability on a Calabi-Yau 3-fold $X$
introduced in~\cite{Tolim}, and
also introduce the notion of $\mu$-limit stability. 
Below we always assume that $X$ is a projective 
complex 3-fold with a trivial canonical class, i.e. 
$X$ is a Calabi-Yau 3-fold. 
We denote by $D^b(X)$ the bounded derived category 
of $\Coh(X)$, and 
 $K(X)$ the Grothendieck group of 
$\Coh(X)$. 
The \textit{numerical Grothendieck group}
of $\Coh(X)$ is given by
 $$\nN(X)=K(X)/\equiv,$$ 
 where the numerical equivalence relation $\equiv$
is given by (\ref{def:equiv}).  
Note that if $\aA \subset D^b(X)$ is the heart 
of a bounded t-structure on $D^b(X)$, 
then the group $\nN(\aA)=K(\aA)/\equiv$, where 
$\equiv$ is (\ref{def:equiv}) as above,
coincides with $\nN(X)$. 
So we always regard $C(\aA)$ as a subset of $\nN(X)$. 

Let us fix notation of the numerical classes of curves on
$X$. An element $\beta \in H^4(X, \mathbb{Z})$ is 
called an \textit{effective class} if there is a one dimensional 
subscheme $C\subset X$ such that 
$\beta$ is the Poincar\'e dual of 
the fundamental cycle of $C$. 
We set $C(X)$, $\overline{C}(X)$ as 
\begin{align*}
C(X) &\cneq \{ \beta \in H^4(X, \mathbb{Z}) \mid \beta
\mbox{ is an effective class }\}, \\ 
\overline{C}(X) &\cneq C(X) \cup \{0\}.
\end{align*}

\subsection{Definition of limit stability}
The limit stability
introduced in~\cite{Tolim} is a stability condition on
the category of perverse coherent sheaves $\mathcal{A}^p$
in the sense of 
Bezrukavnikov~\cite{Bez} and Kashiwara~\cite{Kashi}, 
and it is also one of the polynomial stability conditions 
introduced by Bayer~\cite{Bay} independently. 
In order to introduce $\mathcal{A}^p$, let us define 
the subcategories $(\Coh_{\le 1}(X), \Coh_{\ge 2}(X))$
of $\Coh(X)$, as follows. 
\begin{defi}\emph{We define the pair of subcategories
$(\Coh_{\le 1}(X), \Coh_{\ge 2}(X))$ to be  
\begin{align*}
\Coh_{\le 1}(X) &\cneq \{ E\in \Coh(X) \mid \dim \Supp(E) \le 1\}, \\
\Coh_{\ge 2}(X) &\cneq \{E\in \Coh(X) \mid \Hom(\Coh_{\le 1}(X), E)=0\}.
\end{align*}}
\end{defi}
The category $\mathcal{A}^p$ is defined as follows. 
\begin{defi}\label{defiA}
\emph{
We define the subcategory $\aA^p \subset D^b(X)$ to be  
$$\aA^p \cneq \left\{ E\in D^b(X) : \begin{array}{ll}
 \hH^{-1}(E) \in \Coh_{\ge 2}(X), 
\hH^{0}(E) \in \Coh_{\le 1}(X), \\
\mbox{ and } \hH^{i}(E)=0 \mbox{ for }i\neq -1, 0.
\end{array}\right\}.$$}
\end{defi}
It is easy to see that 
$(\Coh_{\le 1}(X), \Coh_{\ge 2}(X))$
determines a torsion theory on $\Coh(X)$, and  
$\aA^p$ is the corresponding tilting. 
(cf.~\cite[Definition~2.9, Lemma~2.10]{Tolim}.)
Therefore $\aA^p$ is the heart of 
a bounded t-structure on $D^b(X)$, 
in particular $\aA^p$ is an abelian category. 

Next recall that the \textit{complexified ample cone} is defined by 
$$A(X)_{\mathbb{C}}=\{ B+i\omega \in H^2(X, \mathbb{C}) \mid 
\omega \mbox{ is an ample class}\}.$$
Given $\sigma=B+i\omega \in A(X)_{\mathbb{C}}$, one can 
define the map $Z_{\sigma}\colon K(X) \to \mathbb{C}$, 
$$Z_{\sigma}\colon K(X) \ni E \longmapsto
 -\int e^{-(B+i\omega)}\ch(E)\sqrt{\td_X} \in \mathbb{C}.$$
 Explicitly we have 
 \begin{align}\label{explicit}
 Z_{\sigma}(E)=
 \left( -v^B_{3}(E)+\frac{1}{2}\omega ^2 v^B_1(E) \right) 
+ \left( \omega v_2^{B}(E) -\frac{1}{6}\omega^3 v^B_0(E)\right)i,
\end{align}
where $v_i^B \in H^{2i}(X, \mathbb{R})$ for $0\le i\le 3$ are given by
$$e^{-B}\ch(E)\sqrt{\td_X}=(v_0^{B}(E), v_1^{B}(E), 
v_2^{B}(E), v_3^{B}(E))\in H^{\rm{even}}(X, \mathbb{R}).$$ 
For $\sigma_{m}=B+im\omega$ for $m \in \mathbb{R}$, one 
can show the following: for each non-zero object $E\in \aA^p$, 
one has
\begin{align}\label{eq:phase}
Z_{\sigma_m}(E) \in \left\{
r\exp(i\pi \phi) : r>0, \ \frac{1}{4}<\phi<\frac{5}{4}\right\},
\end{align}
for $m\gg 0$. (See~\cite[Lemma~2.20]{Tolim}.)
Hence the \textit{phase} of $E$ is well-defined for 
$m\gg 0$ as follows, 
$$\phi_{\sigma_m}(E) =\frac{1}{\pi}\Imm \log Z_{\sigma_{m}}(E)
\in \left( \frac{1}{4}, \frac{5}{4}\right).$$
\begin{defi}\emph{\bf{\cite[Definition~2.21]{Joy4}}}
\emph{
An object $E\in \aA^p$ is $\sigma$-\textit{limit (semi)stable} 
if for any non-zero subobject $F\subset E$ in $\aA^p$, we have 
\begin{align}\label{def:lim}
\phi_{\sigma_m}(F) < \phi_{\sigma_m}(E), \quad (\mbox{resp. }
\phi_{\sigma_m}(F) \le \phi_{\sigma_m}(E), ) 
\end{align}
for $m\gg 0$.
}
\end{defi}
Let us interpret the above 
stability in terms of Definition~\ref{defi:stab}. 
Let $T$ be the one valuable function field 
$\mathbb{R}(m)$. 
We define the total order on $\mathbb{R}(m)$ to be 
$$f(m)\succeq g(m) \quad \stackrel{\rm{def}}{\leftrightarrow} 
\quad 
f(m)  \ge g(m) \text{ for } m\gg 0.$$
Note that we have
\begin{align}\label{poly}
\Imm e^{-\pi i/4}Z_{\sigma_m}(E) >0, 
\end{align}
for $m\gg 0$ since (\ref{eq:phase}) holds. 
In particular (\ref{poly}) is non-zero as a polynomial 
of $m$, thus the following map is well-defined. 
$$Z_{\sigma}^{T}\colon 
C(\aA^p) \ni E \longmapsto  
-\frac{\Ree e^{-\pi i/4}Z_{\sigma_m}(E)}{\Imm e^{-\pi i/4}Z_{\sigma_m}(E)}
\in T.$$
\begin{lem}\label{circ}
The map $Z_{\sigma}^{T}$ is a stability condition
on $\aA^p$, and
an object
$E\in \aA^p$ is $Z_{\sigma}^{T}$-(semi)stable if and 
only if $E$ is $\sigma$-limit (semi)stable. 
\end{lem}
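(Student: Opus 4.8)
The plan is to treat Lemma~\ref{circ} as a dictionary lemma: it should follow by translating the already-established properties of $\sigma$-limit stability (from~\cite{Tolim}, together with its identification with one of Bayer's polynomial stability conditions~\cite{Bay}) into the language of Definition~\ref{defi:stab}. The one observation that makes this translation work is that, on the open upper half plane $\{\,\Imm>0\,\}$, the map $w\mapsto -\Ree(w)/\Imm(w)$ coincides with $-\cot(\arg w)$, a \emph{strictly increasing} reparametrisation of the argument; so comparing values of $Z_\sigma^T$ in $T$ becomes literally the same as comparing the phases $\phi_{\sigma_m}$ for $m\gg 0$.

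First I would dispose of well-definedness together with a remark on the order on $T$. Substituting $\sigma_m=B+im\omega$ into the explicit formula~(\ref{explicit}) shows that $\Ree\,e^{-\pi i/4}Z_{\sigma_m}(E)$ and $\Imm\,e^{-\pi i/4}Z_{\sigma_m}(E)$ are polynomials in $m$, so $Z_\sigma^T(E)$ is a genuine element of $\mathbb{R}(m)=T$, and it is well-defined since by~(\ref{eq:phase}) and~(\ref{poly}) the denominator is a nonzero polynomial (strictly positive for $m\gg 0$). Because the difference of two elements of $\mathbb{R}(m)$ is again rational and hence of constant sign for $m\gg 0$, the relation $\succeq$ is a total order, with $f\succeq g$ meaning exactly $f(m)\ge g(m)$ for $m\gg 0$; this legitimises all the ``for $m\gg 0$'' comparisons to follow and rules out oscillation. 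Then I would record the dictionary: by~(\ref{eq:phase}), $\arg\bigl(e^{-\pi i/4}Z_{\sigma_m}(E)\bigr)=\pi\bigl(\phi_{\sigma_m}(E)-\tfrac{1}{4}\bigr)\in(0,\pi)$ for $m\gg 0$, hence $Z_\sigma^T(E)(m)=-\cot\bigl(\pi\phi_{\sigma_m}(E)-\tfrac{\pi}{4}\bigr)$; since $-\cot$ is strictly increasing on $(0,\pi)$, for $E,F\in C(\aA^p)$ one gets $Z_\sigma^T(F)\preceq Z_\sigma^T(E)$ iff $\phi_{\sigma_m}(F)\le\phi_{\sigma_m}(E)$ for $m\gg 0$, and likewise with $\prec$ and with $=$.

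With this dictionary both assertions are short. For the stability-function axiom: if $E=F+G$ in $C(\aA^p)$, then by additivity of $Z_{\sigma_m}$ we have $e^{-\pi i/4}Z_{\sigma_m}(E)=e^{-\pi i/4}Z_{\sigma_m}(F)+e^{-\pi i/4}Z_{\sigma_m}(G)$, with all three terms in the open upper half plane for $m\gg 0$; the elementary seesaw (if $z_1,z_2$ have positive imaginary part then $\arg(z_1+z_2)$ lies between $\arg z_1$ and $\arg z_2$, strictly unless $\arg z_1=\arg z_2$) then puts $\phi_{\sigma_m}(E)$ between $\phi_{\sigma_m}(F)$ and $\phi_{\sigma_m}(G)$ for $m\gg 0$, and translating back through the dictionary gives exactly the trichotomy required by Definition~\ref{defi:stab}. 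For the coincidence of (semi)stable objects: applying the same seesaw to $0\to F\to E\to E/F\to 0$ shows $\phi_{\sigma_m}(F)<\phi_{\sigma_m}(E)\iff\phi_{\sigma_m}(F)<\phi_{\sigma_m}(E/F)$ for $m\gg 0$ (and likewise with $\le$), so the defining inequality~(\ref{def:lim}) of $\sigma$-limit (semi)stability is equivalent to $Z_\sigma^T(F)\prec Z_\sigma^T(E/F)$, resp.\ $\preceq$, which is precisely $Z_\sigma^T$-(semi)stability.

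Finally, to upgrade ``stability function'' to ``stability condition'' one needs Harder--Narasimhan filtrations; I would import their existence for $\sigma$-limit semistable objects from~\cite{Tolim} (where it is part of the statement that limit stability is a stability condition, equivalently from the general existence of HN filtrations for Bayer's polynomial stability conditions~\cite{Bay}) and observe that, under the dictionary, a limit-stability HN filtration is exactly an HN filtration for $Z_\sigma^T$ in the sense of the text. I do not expect a genuine obstacle: the lemma is essentially bookkeeping, and the only points requiring real care are (i) the interplay between pointwise-in-$m$ inequalities and the total order on $\mathbb{R}(m)$, which is resolved by noting that every quantity being compared is a rational function of $m$, and (ii) the fact that existence of HN filtrations is quoted from~\cite{Tolim}/\cite{Bay} rather than reproved here.
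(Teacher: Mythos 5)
Your proposal is correct and follows essentially the same route as the paper's proof: the key point in both is that, thanks to (\ref{poly}), comparison of $Z_{\sigma}^{T}$-values in $T=\mathbb{R}(m)$ is order-isomorphic to comparison of the phases $\phi_{\sigma_m}$ for $m\gg 0$, after which the stability-function axiom and the identification of (semi)stable objects follow from the standard seesaw for the additive function $Z_{\sigma_m}$, and the Harder--Narasimhan property is imported from \cite[Theorem~2.28]{Tolim}. You merely make explicit (via the $-\cot$ reparametrisation and the seesaw applied to $0\to F\to E\to E/F\to 0$) two steps the paper declares ``obvious''.
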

\begin{proof}
Since (\ref{poly}) holds, it is obvious that 
for $v_1, v_2 \in C(\aA_{p})$, 
the inequality $\phi_{\sigma_m}(v_1)\le  \phi_{\sigma_m}(v_2)$ holds 
for $m\gg 0$
if and only if $Z_{\sigma}^{T}(v_1) \preceq Z_{\sigma}^{T}(v_2)$
holds in $T$. 
Hence $Z_{\sigma}^{T}$ is a stability function, and 
the latter statement also follows. 
The existence of Harder-Narasimhan filtrations for limit stability is proved in~\cite[Theorem~2.28]{Tolim}, 
hence $Z_{\sigma}^{T}$ is a stability condition.  
\end{proof}

\subsection{$\mu$-limit stability}
In this subsection, we introduce a weak
stability condition on $\aA^p$, which we call
\textit{$\mu$-limit stability}. 
Let us introduce the following notation. 
\begin{defi}\emph{
Let $f=\sum_{i=0}^{d} a_i(\sigma) m^i$
be a polynomial such that each coefficient $a_i(\sigma)$ is a
$\mathbb{R}$-valued function on $A(X)_{\mathbb{C}}$, and
 $a_d(\sigma) \not \equiv 0$. We define 
 $f^{\dag}=a_d(\sigma) m^d$. }
\end{defi}
By the formula (\ref{explicit}), 
$\Ree Z_{\sigma _m}(E)$ and $\Imm Z_{\sigma_m}(E)$ 
are written as polynomials of $m$ whose coefficients
are $\mathbb{R}$-valued functions on $A(X)_{\mathbb{C}}$. 
Thus the following makes sense,
$$Z_{\sigma_m}^{\dag}(E) \cneq 
(\Ree Z_{\sigma_m}(E))^{\dag}+(\Imm Z_{\sigma_m}(E))^{\dag}i.$$
The same argument of~\cite[Lemma~2.20]{Tolim}
shows that 
$$Z_{\sigma_m}^{\dag}(E) \in \left\{ r\exp(i\pi \phi) : r>0, \
\frac{1}{4}< \phi < \frac{5}{4} \right\}, $$
for $m\gg 0$. Hence as before we can define the following map, 
$$Z_{\mu_{\sigma}}\colon C(\aA^p) \ni E \longmapsto 
-\frac{\Ree e^{-\pi i/4} 
Z_{\sigma_m}^{\dag}(E)}{\Imm e^{-\pi i/4} Z_{\sigma_m}^{\dag}(E)}
\in T.$$ 
We have the following lemma. 
\begin{lem}
The map $Z_{\mu_{\sigma}}$ is a weak stability function on 
$\aA^p$. 
\end{lem}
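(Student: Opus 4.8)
The plan is to reduce the statement to a bounded case-check governed by the sign structure of the positive cone $C(\aA^p)\subset\nN(X)$. First I would spell out the numerical consequences of Definition~\ref{defiA}: for $0\neq E\in\aA^p$ one has $\ch_0(E)=-\rk\hH^{-1}(E)\le 0$; if $\ch_0(E)=0$ then $\hH^{-1}(E)$ is a pure two-dimensional sheaf or zero, so $\ch_1(E)=-\ch_1(\hH^{-1}(E))$ is anti-pseudoeffective; if moreover $\ch_1(E)=0$ then $E=\hH^0(E)\in\Coh_{\le 1}(X)$ and $\ch_2(E)$ is an effective curve class; and if in addition $\ch_2(E)=0$ then $E$ is zero-dimensional, so $\ch_3(E)>0$. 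Via (\ref{explicit}) with $\omega$ replaced by $m\omega$, the polynomials $\Ree Z_{\sigma_m}(E)=-v_3^B(E)+\frac{1}{2}m^2(\omega^2 v_1^B(E))$ and $\Imm Z_{\sigma_m}(E)=m(\omega v_2^B(E))-\frac{1}{6}m^3(\omega^3 v_0^B(E))$ have coefficients that are $\mathbb{Z}$-linear functionals on $\nN(X)$ (namely $v_3^B$, $\omega^2 v_1^B$, $\omega v_2^B$ and $\omega^3 v_0^B=\ch_0(\cdot)\int_X\omega^3$), and the sign hierarchy above determines, for every class, which of these is the leading coefficient of $\Ree Z_{\sigma_m}$ and of $\Imm Z_{\sigma_m}$ — equivalently it determines the pair of leading monomials $(\Ree Z_{\sigma_m}(E))^{\dag}$, $(\Imm Z_{\sigma_m}(E))^{\dag}$ on which $Z_{\mu_\sigma}(E)$ alone depends.

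Next I would reduce the defining inequality of a weak stability function to a statement about phases. Since $Z_{\sigma_m}^{\dag}(E)$ has phase in $(1/4,5/4)$ for $m\gg 0$ by the argument of \cite[Lemma~2.20]{Tolim} (as observed just above the Lemma), the number $\theta_m(E)\cneq\arg(e^{-\pi i/4}Z_{\sigma_m}^{\dag}(E))$ lies in $(0,\pi)$ for $m\gg 0$, and $Z_{\mu_\sigma}(E)=-\cot\theta_m(E)$. As $-\cot\colon(0,\pi)\to\mathbb{R}$ is strictly increasing, for $v',v''\in C(\aA^p)$ one has $Z_{\mu_\sigma}(v')\preceq Z_{\mu_\sigma}(v'')$ in $T$ if and only if $\theta_m(v')\le\theta_m(v'')$ for $m\gg 0$; so the weak stability function property becomes the assertion that whenever $v=v'+v''$ in $C(\aA^p)$, the angle $\theta_m(v)$ lies between $\theta_m(v')$ and $\theta_m(v'')$ for $m\gg 0$. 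Now the leading monomials of $\Ree Z_{\sigma_m}(v)$ and $\Imm Z_{\sigma_m}(v)$ are obtained from those of $v'$ and $v''$ by the obvious rule (take the higher degree, or add the leading coefficients when the degrees coincide), so I would run through the finitely many configurations permitted by the sign hierarchy. In the generic configurations — where the relevant degrees agree and the leading coefficients do not cancel, or where one summand's leading monomial simply dominates — the leading monomial $Z_{\sigma_m}^{\dag}(v)$ is a nonnegative $\mathbb{R}$-linear combination of $Z_{\sigma_m}^{\dag}(v')$ and $Z_{\sigma_m}^{\dag}(v'')$ up to terms of strictly lower order in $m$, and its argument is then squeezed between theirs by the elementary fact that the argument of a sum of two nonzero vectors lying in a common open half-plane lies between their arguments; this is exactly the mechanism underlying Lemma~\ref{circ}, here applied to leading terms.

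The one genuinely delicate point, and the step I expect to be the main obstacle, is the cancellation configurations: when the leading coefficients of $\Ree Z_{\sigma_m}$ (or of $\Imm Z_{\sigma_m}$) of $v'$ and $v''$ have the same degree but opposite sign, the class $v=v'+v''$ drops to a strictly lower type, so $Z_{\mu_\sigma}(v)$ is governed by sub-leading data while $Z_{\mu_\sigma}(v')$ and $Z_{\mu_\sigma}(v'')$ are governed by the mutually cancelling leading data. For these I would expand $Z_{\mu_\sigma}(v),Z_{\mu_\sigma}(v'),Z_{\mu_\sigma}(v'')$ as Laurent series in $1/m$ up to the first order separating them, and check the required inequality using the sign constraints: e.g.\ the $m^3$-coefficient of $\Imm Z_{\sigma_m}$ is $-\frac{1}{6}\ch_0(\cdot)\int_X\omega^3\ge 0$ throughout $C(\aA^p)$, and the $m^2$-coefficient of $\Ree Z_{\sigma_m}$ is $\le 0$ on the locus $\ch_0=0$. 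The arithmetic input in each such case is that the surviving sub-leading functional of $v$ is literally the sum of those of $v'$ and $v''$, so once the leading terms are known to cancel against each other a mediant/seesaw comparison applies again. Assembling the generic and cancellation cases yields the betweenness of $\theta_m(v)$, and hence the Lemma. A secondary technical point to keep in mind is that the sign hierarchy must hold for \emph{all} classes in $C(\aA^p)$, which follows from the torsion-pair description of $\aA^p$ recalled after Definition~\ref{defiA} (every such class has the two-term form with $\hH^{-1}\in\Coh_{\ge 2}(X)$, $\hH^0\in\Coh_{\le 1}(X)$), not merely for classes of generic objects.
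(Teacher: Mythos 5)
Your strategy --- stratifying $C(\aA^p)$ by which component of the Chern character first becomes nonzero and then verifying the seesaw inequality of Definition~\ref{defi:stab} by a case-by-case comparison of the arguments of the leading monomials --- is viable but genuinely different from the paper's argument, and it leaves its decisive step unexecuted. The paper's proof is a two-line reduction to Lemma~\ref{circ}: the untruncated $Z_{\sigma}^{T}$ is already known to be a (strict) stability function, so for $E=F+G$ one of the three trichotomy alternatives holds for $Z_{\sigma}^{T}$; since $f\mapsto f^{\dag}$ merely extracts initial terms, a strict eventual inequality between the rational functions $Z_{\sigma}^{T}(\cdot)$ can only degenerate to a non-strict one between the truncated functions $Z_{\mu_{\sigma}}(\cdot)$ (and equality passes to equality, since $Z_{\sigma}^{T}(E)=Z_{\sigma}^{T}(F)$ amounts to a polynomial identity whose leading terms give $Z_{\mu_{\sigma}}(E)=Z_{\mu_{\sigma}}(F)$); this is the implication (\ref{imply}), and the weak seesaw follows formally. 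What this buys is precisely the disappearance of your ``cancellation configurations'': whatever leading coefficients cancel in $v'+v''$, the full polynomials still obey the strict trichotomy, and truncation only softens inequalities. You actually invoke the relevant squeeze mechanism (the half-plane argument behind Lemma~\ref{circ}) for your generic configurations, but you apply it to the leading terms rather than to the full $Z_{\sigma_m}$, which is what forces you into the sub-leading expansions. As written, your treatment of the cancellation cases is only a plan: when the leading terms of $v'$ and $v''$ cancel, $Z_{\sigma_m}^{\dag}(v)$ is \emph{not} a nonnegative combination of $Z_{\sigma_m}^{\dag}(v')$ and $Z_{\sigma_m}^{\dag}(v'')$, so the mediant/half-plane argument does not literally apply there and the required inequality would have to be checked from the sub-leading functionals; until that is carried out (or replaced by the reduction above), the proof is incomplete.
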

\begin{proof}
Since the operation $f \mapsto f^{\dag}$ is just taking the 
initial term of the polynomials, we have the implication
\begin{align}\label{imply}
Z_{\sigma}^{T}(E)\succ Z_{\sigma}^{T}(F) \quad 
\Rightarrow \quad 
 Z_{\mu_{\sigma}}(E) \succeq Z_{\mu_{\sigma}}(F),
 \end{align}
 for $E, F\in C(\aA^p)$. Hence 
$Z_{\mu_{\sigma}}$ is a weak stability function. 
\end{proof}
It is easy to see that for $0\neq E \in \aA^p$, one has 
 $Z_{\mu_{\sigma}}(E) \to 1$ or $-1$ for $m\to \infty$.
To see that $Z_{\mu_{\sigma}}$ is a weak stability
condition, we introduce a pair of subcategories in $\aA^p$. 
(cf.~\cite[subsection~2.3]{Tolim}.)
\begin{defi}\emph{
We define $(\aA_{1}^p, \aA_{1/2}^p)$ to be 
\begin{align*}
\aA_{1}^p &=\{ E\in \aA^p \mid \dim \Supp \hH^0(E)=0 \mbox{ and }
\hH^{-1}(E) \mbox{ is a torsion sheaf }\}, \\
\aA_{1/2}^p &=\{ E\in \aA^p \mid \Hom(F, E)=0 \mbox{ for any }
F\in \aA_{1}^p\}.
\end{align*}}
\end{defi}
It is shown in~\cite[Lemma~2.16]{Tolim}
that 
$(\aA_{1}^{p}, \aA_{1/2}^p)$ determines a torsion theory on $\aA^p$.
We shall use the same notation of ``strict monomorphisms'', 
``strict epimorphisms'' in $\aA_{i}^p$ as in Definition~\ref{strict}, 
i.e. we replace $\aA_{Z\prec t}$ by $\aA_{i}^p$ to define them. 
We have the following. 
\begin{lem}\label{same}
An object $E\in \aA^p$ is $Z_{\mu_{\sigma}}$-semistable 
with $Z_{\mu_{\sigma}}(E) \to -1$ (resp. $1$)
for $m\to \infty$
if and only if $E\in \aA^p_{1/2}$, (resp. $\aA_{1}^{p}$,)
 and for any strict monomorphism  
$0\neq F\hookrightarrow E$ in $\aA_{1/2}^p$,
 (resp. $\aA_{1}^{p}$,)
  one has 
$Z_{\mu_{\sigma}}(F)\preceq Z_{\mu_{\sigma}}(E/F)$. 
\end{lem}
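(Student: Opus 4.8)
The plan is to read the two limiting values of $Z_{\mu_\sigma}$ off the torsion pair $(\aA_1^p,\aA_{1/2}^p)$ of \cite[Lemma~2.16]{Tolim} and then transport the weak stability function property of $Z_{\mu_\sigma}$ (proved in the previous lemma) back and forth along that torsion pair. I will write out only the case ``$Z_{\mu_\sigma}(E)\to -1$, $\aA_{1/2}^p$''; the case ``$Z_{\mu_\sigma}(E)\to 1$, $\aA_1^p$'' is dual, with subobjects and quotients exchanged, because $\aA_1^p$ is the torsion part and $\aA_{1/2}^p$ the torsion-free part of the torsion pair. Two facts carry the argument. First, for $0\neq E\in\aA^p$ one has $Z_{\mu_\sigma}(E)\to 1$ if $E\in\aA_1^p$ and $Z_{\mu_\sigma}(E)\to -1$ if $E\in\aA_{1/2}^p$; this is a leading term computation from (\ref{explicit}): if $E\in\aA_{1/2}^p$ then either $\ch_0(E)<0$ and the imaginary part of $Z_{\sigma_m}(E)$ has a nonzero $m^3$-coefficient, or $\ch_0(E)=0$, in which case $\hH^{-1}(E)$ must vanish (a nonzero torsion sheaf in $\Coh_{\geq 2}(X)$ is pure two dimensional, and then $\hH^{-1}(E)[1]\in\aA_1^p$ would map nontrivially to $E$) so that $E$ is a pure one dimensional sheaf with $\omega\,\ch_2(E)>0$; in both subcases the imaginary part of $Z_{\sigma_m}(E)$ outgrows the real part and $Z_{\mu_\sigma}(E)\to -1$, and symmetrically $E\in\aA_1^p$ is either zero dimensional or has $\hH^{-1}(E)$ nonzero and pure two dimensional, giving a dominant real part and $Z_{\mu_\sigma}(E)\to 1$. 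Second, since $(\aA_1^p,\aA_{1/2}^p)$ is a torsion pair, $\aA_{1/2}^p$ is closed under subobjects in $\aA^p$, $\aA_1^p$ is closed under quotients, and $\Hom(\aA_1^p,\aA_{1/2}^p)=0$.

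For the ``only if'' direction, let $E$ be $Z_{\mu_\sigma}$-semistable with $Z_{\mu_\sigma}(E)\to -1$ and consider the torsion pair sequence $0\to E_1\to E\to E_{1/2}\to 0$. If $E_1\neq 0$, semistability applied to $E_1\subset E$ gives $Z_{\mu_\sigma}(E_1)\preceq Z_{\mu_\sigma}(E_{1/2})$; but $Z_{\mu_\sigma}(E_1)\to 1$ while $Z_{\mu_\sigma}(E_{1/2})\to -1$ if $E_{1/2}\neq 0$, which is impossible, and $E_{1/2}=0$ would give $E=E_1\in\aA_1^p$ and hence $Z_{\mu_\sigma}(E)\to 1$, also impossible. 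So $E\in\aA_{1/2}^p$, and the condition on strict monomorphisms is then automatic: by Definition~\ref{strict} (read with $\aA_{1/2}^p$ in place of $\aA_{Z\prec t}$), a strict monomorphism $F\hookrightarrow E$ in $\aA_{1/2}^p$ is a subobject in $\aA^p$ whose $\aA^p$-cokernel is the quotient $E/F$ taken in $\aA_{1/2}^p$, so $Z_{\mu_\sigma}(F)\preceq Z_{\mu_\sigma}(E/F)$ is a special case of $Z_{\mu_\sigma}$-semistability of $E$.

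For the ``if'' direction, assume $E\in\aA_{1/2}^p$ together with the stated inequality for strict monomorphisms, and take any nonzero proper subobject $G\subset E$ in $\aA^p$; then $G\in\aA_{1/2}^p$. With $\pi\colon E\to E/G$ the quotient and $(E/G)_1\subseteq E/G$ the torsion subobject of the $\aA^p$-cokernel, set $\bar G:=\pi^{-1}((E/G)_1)$, so that $\bar G/G\cong(E/G)_1\in\aA_1^p$ and $E/\bar G\cong(E/G)_{1/2}\in\aA_{1/2}^p$; thus $\bar G\hookrightarrow E$ is a strict monomorphism in $\aA_{1/2}^p$ and the hypothesis gives $Z_{\mu_\sigma}(\bar G)\preceq Z_{\mu_\sigma}((E/G)_{1/2})$. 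I would then apply the weak stability function property twice: to $0\to G\to\bar G\to(E/G)_1\to 0$, where $Z_{\mu_\sigma}(G)\to -1$ and $Z_{\mu_\sigma}((E/G)_1)\to 1$ lie on opposite sides, so that $Z_{\mu_\sigma}(\bar G)$ lies between them and $Z_{\mu_\sigma}(G)\preceq Z_{\mu_\sigma}(\bar G)$; and to $0\to(E/G)_1\to E/G\to(E/G)_{1/2}\to 0$, so that $Z_{\mu_\sigma}(E/G)$ lies between $Z_{\mu_\sigma}((E/G)_1)\to 1$ and $Z_{\mu_\sigma}((E/G)_{1/2})\to -1$, giving $Z_{\mu_\sigma}((E/G)_{1/2})\preceq Z_{\mu_\sigma}(E/G)$. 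Chaining the three inequalities yields $Z_{\mu_\sigma}(G)\preceq Z_{\mu_\sigma}(E/G)$, so $E$ is $Z_{\mu_\sigma}$-semistable, and $Z_{\mu_\sigma}(E)\to -1$ because $E\in\aA_{1/2}^p$; the degenerate cases $(E/G)_1=0$ (then $\bar G=G$) and $(E/G)_{1/2}=0$ (then $E/G\in\aA_1^p$, so $Z_{\mu_\sigma}(E/G)\to 1$ while $Z_{\mu_\sigma}(G)\to -1$) are immediate.

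The step I expect to need the most care is the coherent use of the three-term alternative in the weak stability function property: for each short exact sequence above, the middle term's value must land on the side I want, and this works precisely because the limits $1$ and $-1$ are distinct and the relevant pair of objects sit on opposite sides, so the disjunction collapses to the inequality I need; verifying the first preliminary fact (in particular that a torsion sheaf in $\Coh_{\geq 2}(X)$ is pure two dimensional, which forces $\hH^{-1}$ of an object of $\aA_{1/2}^p$ with $\ch_0=0$ to vanish) is the only genuinely computational point. For the dual statement one argues identically but replaces $\bar G$ by the torsion subobject $G_1\subseteq G$: since $\aA_1^p$ is closed under quotients, $G_1\hookrightarrow E$ is strict in $\aA_1^p$ and the hypothesis gives $Z_{\mu_\sigma}(G_1)\preceq Z_{\mu_\sigma}(E/G_1)$, which one chains with the inequalities coming from $0\to G_1\to G\to G_{1/2}\to 0$ and $0\to G_{1/2}\to E/G_1\to E/G\to 0$ to conclude.
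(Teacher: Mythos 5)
Your argument is correct, and it is essentially the paper's approach: the paper's proof simply defers to the analogous statement \cite[Lemma~2.26]{Tolim} after recording the one new ingredient, namely that $Z_{\mu_{\sigma}}(E)\to 1$ for $E\in\aA_{1}^{p}$ and $Z_{\mu_{\sigma}}(E)\to -1$ for $E\in\aA_{1/2}^{p}$, which is exactly the asymptotic dichotomy you establish and then combine with the torsion pair $(\aA_{1}^{p},\aA_{1/2}^{p})$. Your write-up just makes explicit the bookkeeping (passing from an arbitrary subobject $G$ to the strict subobject $\bar G$ and chaining the seesaw inequalities) that the cited lemma encapsulates.
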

\begin{proof}
The proof is same as in~\cite[Lemma~2.26]{Tolim}
for the limit stability,  
by noting that 
$$Z_{\mu_{\sigma}}(E) \to -1, \quad (\mbox{resp. }Z_{\mu_{\sigma}}(E) \to 1,)$$
for $E\in \aA_{1/2}^p$, (resp. $E\in \aA_{1}^{p}$,)
and $m \to \infty$. 
\end{proof}
We also have the following. 
\begin{lem}
The weak stability function $Z_{\mu_{\sigma}}$ is 
a weak stability condition. 
\end{lem}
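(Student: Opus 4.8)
The plan is to verify the one remaining axiom, namely that every object of $\aA^p$ admits a Harder--Narasimhan filtration with respect to $Z_{\mu_{\sigma}}$. Since $\aA^p$ is not noetherian I would not attempt a Rudakov-type argument directly on $\aA^p$; instead I would build the filtration in two stages, mimicking the proof of the analogous statement for limit stability in \cite[Theorem~2.28]{Tolim}, with Lemma~\ref{same} taking the place of \cite[Lemma~2.26]{Tolim}. The first stage uses the torsion theory $(\aA_1^p,\aA_{1/2}^p)$ to isolate the two possible asymptotic values $\pm 1$ of $Z_{\mu_{\sigma}}$; the second stage produces Harder--Narasimhan filtrations inside each of the quasi-abelian categories $\aA_1^p$ and $\aA_{1/2}^p$.

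\emph{Step 1.} For $0\neq E\in\aA^p$, the torsion theory of \cite[Lemma~2.16]{Tolim} gives a short exact sequence $0\to E^t\to E\to E^f\to 0$ in $\aA^p$ with $E^t\in\aA_1^p$ and $E^f\in\aA_{1/2}^p$. Because $Z_{\mu_{\sigma}}(E^t)\to 1$ and $Z_{\mu_{\sigma}}(E^f)\to -1$ as $m\to\infty$, and more generally any $Z_{\mu_{\sigma}}$-semistable object of $\aA_1^p$ has value tending to $1$ while any one in $\aA_{1/2}^p$ has value tending to $-1$, in the order on $T=\mathbb{R}(m)$ every ``$\aA_1^p$-slope'' lies strictly above every ``$\aA_{1/2}^p$-slope''. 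Thus it is enough to refine $E^t$ inside $\aA_1^p$ and $E^f$ inside $\aA_{1/2}^p$, and then concatenate (lifting a filtration of $E^f=E/E^t$ to one of $E$ through $E^t$): since $\aA_1^p$ is closed under quotients and the cokernel of a strict monomorphism in $\aA_{1/2}^p$ again lies in $\aA_{1/2}^p$, all the subquotients produced stay in the appropriate subcategory, and by Lemma~\ref{same} they are genuinely $Z_{\mu_{\sigma}}$-semistable in $\aA^p$, with the required strict ordering.

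\emph{Step 2.} It remains to produce Harder--Narasimhan filtrations for $Z_{\mu_{\sigma}}$ within $\aA_1^p$ and $\aA_{1/2}^p$, taken with respect to strict monomorphisms and strict epimorphisms. Here I would run the standard argument (as in \cite[Theorem~2]{Ruda}, in the quasi-abelian form used in Section~\ref{sub:quasi}): $Z_{\mu_{\sigma}}$ is already a weak stability function, so the weak see-saw property along strict short exact sequences is automatic, and one inducts by choosing a maximal strict destabilizing subobject, showing it is $Z_{\mu_{\sigma}}$-semistable, and repeating on the strict quotient. For this induction to terminate one needs $\aA_1^p$ and $\aA_{1/2}^p$ to be noetherian and artinian with respect to strict monomorphisms, which follows from coherence of the sheaves $\hH^{-1}$ and finiteness of $\hH^0$ of the objects involved --- the same finiteness that underlies the limit-stability statement in \cite{Tolim}.

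\emph{Expected difficulty.} I expect the delicate part to be precisely those chain conditions on $\aA_1^p$ and $\aA_{1/2}^p$ relative to strict monomorphisms, combined with the usual extra care required because $Z_{\mu_{\sigma}}$ is only a \emph{weak} stability function: when consecutive candidate subquotients share the same $Z_{\mu_{\sigma}}$-value they must be amalgamated into a single semistable piece, and one has to check this is compatible with the termination arguments. If \cite{Tolim} already records these finiteness properties in the course of proving \cite[Theorem~2.28]{Tolim}, the present lemma reduces to transcribing that proof with Lemma~\ref{same} in place of \cite[Lemma~2.26]{Tolim}.
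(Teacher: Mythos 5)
Your proposal is correct and follows essentially the same route as the paper, which simply observes that the proof of \cite[Theorem~2.28]{Tolim} carries over once Lemma~\ref{same} is substituted for \cite[Lemma~2.26]{Tolim}; your two-step outline (split via the torsion pair $(\aA_1^p,\aA_{1/2}^p)$, then build Harder--Narasimhan filtrations in each quasi-abelian piece using the chain conditions with respect to strict monomorphisms recorded in \cite[Lemma~2.19]{Tolim}) is exactly the argument being invoked there.
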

\begin{proof}
The existence of Harder-Narasimhan filtrations follows from 
the same argument for the limit stability. 
The proof of~\cite[Theorem~2.28]{Tolim} also works in this case, by
noting Lemma~\ref{same}. 
\end{proof}
We say $E\in \aA^p$ is $\mu_{\sigma}$-limit 
(semi)stable if it is (semi)stable 
in $Z_{\mu_{\sigma}}$-weak stability. 
To explain this notation, let us recall that 
the (usual) $\mu$-stability is defined by  
cutting off the lower degree terms of the reduced 
Hilbert polynomials. 
In this sense, our $\mu_{\sigma}$-limit stability 
resembles to $\mu$-stability, as we also cut 
off the lower degree terms of the polynomial $Z_{\sigma_m}(\ast)$. 
By (\ref{imply}), we have the following implications, 
$$\mu_{\sigma}\mbox{-limit stable } \Rightarrow \sigma\mbox{-limit stable }
\Rightarrow \sigma\mbox{-limit semistable } \Rightarrow 
\mu_{\sigma}\mbox{-limit semistable }.$$
\begin{rmk}\emph{
For $0\neq E \in \aA^p$, 
let 
$\phi_{\sigma_m}^{\dag}(E)$ be
$$\phi_{\sigma_m}^{\dag}(E)=\frac{1}{\pi}\Imm \log Z_{\sigma_m}^{\dag}(E)
\in \left( \frac{1}{4}, \frac{5}{4} \right).$$
Then obviously an object $E\in \aA^p$ is $\mu_{\sigma}$-limit 
(semi)stable if and only if for any non-zero subobject 
$F\subset E$, one has 
$$\phi_{\sigma_m}^{\dag}(F) < \phi_{\sigma_m}^{\dag}(E/F), \quad 
(\mbox{resp. }\phi_{\sigma_m}^{\dag}(F) \le  \phi_{\sigma_m}^{\dag}(E/F),)$$
for $m\gg 0$. 
}
\end{rmk}

\begin{rmk}\label{rmk:twisted}
\emph{
Let us take $F\in \Coh_{\le 1}(X)$. 
In this case we have $Z_{\sigma_m}^{\dag}(F)=Z_{\sigma_m}(F)$, 
and $\Coh_{\le 1}(X)\subset \aA^p$ is closed under 
taking subobjects and quotients. So
 $F$ is $\mu_{\sigma}$-limit (semi)stable 
if and only it is $\sigma$-limit (semi)stable.  
On the other hand for 
$\sigma=B+i\omega \in A(X)_{\mathbb{C}}$, let $\mu_{\sigma}(F) \in \mathbb{Q}$
be 
\begin{align}\label{mu}
\mu_{\sigma}(F)=\frac{\ch_3(F)-B\ch_2(F)}{\omega \ch_2(F)} \in \mathbb{Q}.
\end{align}
As in~\cite[Example~2.24 (ii)]{Tolim}, the object
$F$ is $\sigma$-limit (semi)stable if and only if for any 
subsheaf $0\neq F'\subset F$ we have 
$\mu_{\sigma}(F') < \mu_{\sigma}(F)$, (resp. 
$\mu_{\sigma}(F') \le \mu_{\sigma}(F)$,) 
i.e. $F$ is a $(B, \omega)$-twisted (semi)stable sheaf. 
If $B=k\omega$ for $k\in \mathbb{R}$, then 
$F$ is $\sigma$-limit (semi)stable if and only if 
$F$ is a $\omega$-Gieseker (semi)stable sheaf, and this notion 
does not depend on $k$. 
}
\end{rmk}
\begin{rmk}\label{rmk:ob}
\emph{
By Lemma~\ref{same}, it is obvious that  
\begin{align}\label{obv}
\aA^{p}_{Z_{\mu_{\sigma}}\prec 0}=\aA_{1/2}^{p}, \quad 
\aA^{p}_{Z_{\mu_{\sigma}}\succeq 0}=\aA_{1}^{p},
\end{align}
in the notation of subsection~\ref{sub:quasi}. 
Since $\aA_{1/2}^p$ and $\aA_{1}^p$ are of finite length with 
respect to strict monomorphisms and strict 
epimorphisms, (cf.~\cite[Lemma~2.19]{Tolim},)
the categories 
$\aA^{p}_{Z_{\mu_{\sigma}}\prec 0}$
and $\aA^{p}_{Z_{\mu_{\sigma}}\succeq 0}$ also have such 
properties. 
}
\end{rmk}

\subsection{Characterization of $\mu$-limit semistable objects}
Take $v\in C(\aA^p)$ satisfying 
 \begin{align}\label{num}
 (\ch_0(v), \ch_1(v), \ch_2(v), \ch_3(v))
 =(-1, 0, \beta, n),
 \end{align}
 for some $\beta \in H^4(X, \mathbb{Q})$ and 
 $n\in H^6(X, \mathbb{Q}) \cong \mathbb{Q}$. 
In this subsection we give a
 characterization of $\mu$-limit semistable objects
 $E\in \aA^p$ of numerical type $v$. 
 i.e. $[E]=v \in C(\aA^p)$. 
 Note that such objects satisfy 
 $Z_{\mu_{\sigma}}(E) \to -1$ for $m\to \infty$, hence we have 
 $$E\in \aA_{1/2}^p, \quad v\in C(\aA_{1/2}^p).$$
 Also if such $E$ exists, 
  the classes $\beta$, $n$ are contained in 
 $\overline{C}(X)$, $H^6(X, \mathbb{Z})\cong \mathbb{Z}$ respectively,
 by~\cite[Remark~3.3]{Tolim}.
 We have the following proposition, whose 
 corresponding result for limit stability is
  seen in~\cite[Section~3]{Tolim}.

\begin{prop}\label{prop:char}
Take $\sigma=B+i\omega \in A(X)_{\mathbb{C}}$. 
For an object $E\in \aA_{1/2}^p$ 
of numerical type $v$, it is 
 $\mu_{\sigma}$-limit (semi)stable if and only if the following 
conditions hold.

(a) For any pure one dimensional sheaf $G\neq 0$
which admits a strict epimorphism $E\twoheadrightarrow G$ in 
$\aA_{1/2}^p$, one 
has
\begin{align}\label{ineq1}
\mu_{\sigma}(G)>-\frac{3B\omega^2}{\omega^3}.
\quad \left(
\mbox{resp. }\mu_{\sigma}(G)\ge -\frac{3B\omega^2}{\omega^3}.
\right) \end{align}

(b) For any pure one dimensional sheaf $F\neq 0$
which admits a strict monomorphism $F\hookrightarrow E$ in 
$\aA_{1/2}^p$, one 
has 
\begin{align}\label{ineq2}
\mu_{\sigma}(F)<-\frac{3B\omega^2}{\omega^3}.
\quad \left(
\mbox{resp. }\mu_{\sigma}(F)\le -\frac{3B\omega^2}{\omega^3}.
\right)
\end{align}
\end{prop}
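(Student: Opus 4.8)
The plan is to test $\mu_{\sigma}$-limit (semi)stability against the criterion of Lemma~\ref{same}, namely that $E\in\aA_{1/2}^p$ is $\mu_{\sigma}$-limit (semi)stable iff $Z_{\mu_{\sigma}}(F)\ (\prec)\ \preceq\ Z_{\mu_{\sigma}}(E/F)$ for every strict monomorphism $0\neq F\hookrightarrow E$ in $\aA_{1/2}^p$. The first ingredient is a computation with the central charge. For a pure one-dimensional sheaf $G$ one has $v_{0}^{B}(G)=v_{1}^{B}(G)=0$, so by (\ref{explicit}) $Z_{\sigma_m}(G)$ already equals its leading term $Z_{\sigma_m}^{\dag}(G)$, and since $v_{3}^{B}(G)=\ch_3(G)-B\ch_2(G)=\mu_{\sigma}(G)\cdot\omega\ch_2(G)$ by (\ref{mu}), a short manipulation gives
\begin{align*}
Z_{\mu_{\sigma}}(G)=-\frac{m-\mu_{\sigma}(G)}{m+\mu_{\sigma}(G)}.
\end{align*}
For an object $A$ of numerical type (\ref{num}) one has $v_{0}^{B}(A)=-1$, $v_{1}^{B}(A)=B$, so the leading terms of $\Imm Z_{\sigma_m}(A)$ and (for $B\omega^2\neq 0$) $\Ree Z_{\sigma_m}(A)$ are $\tfrac16\omega^3 m^3$ and $\tfrac12(B\omega^2)m^2$, and the same computation gives $Z_{\mu_{\sigma}}(A)=-(m-\mu_{0})/(m+\mu_{0})$ with $\mu_{0}\cneq -3B\omega^2/\omega^3$. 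In particular $Z_{\mu_{\sigma}}(A)$ depends only on $\ch_0,\ch_1$ of $A$, and comparing the two formulas, for $G$ pure one-dimensional and $A$ of type (\ref{num}) one has $Z_{\mu_{\sigma}}(G)\succ,=,\prec Z_{\mu_{\sigma}}(A)$ exactly according as $\mu_{\sigma}(G)>,=,<\mu_{0}$.

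The second ingredient classifies the strict subobjects. In a strict short exact sequence $0\to F\to E\to Q\to 0$ in $\aA^p$ with $F,Q\in\aA_{1/2}^p$ nonzero, every object of $\aA^p$ has $\ch_0\le 0$ and $\ch_0(F)+\ch_0(Q)=-1$, so exactly one of $\ch_0(F),\ch_0(Q)$ is $0$ and the other $-1$. Any $A\in\aA_{1/2}^p$ with $\ch_0(A)=0$ is a pure one-dimensional sheaf: $\hH^{-1}(A)$ has rank $0$, hence is a torsion sheaf in $\Coh_{\ge 2}(X)$, and were it nonzero, $\hH^{-1}(A)[1]$ would lie in $\aA_{1}^{p}$ and be a subobject of $A$ in $\aA^p$, contradicting $A\in\aA_{1/2}^p$; thus $\hH^{-1}(A)=0$, $A\in\Coh_{\le 1}(X)$, and a nonzero zero-dimensional torsion subsheaf of $A$ would again lie in $\aA_{1}^{p}$ and map into $A$, so $A$ is pure one-dimensional. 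Therefore a strict monomorphism $0\neq F\hookrightarrow E$ in $\aA_{1/2}^p$ is either of the type in (b) (i.e.\ $F$ a pure one-dimensional subsheaf, $Q$ of type (\ref{num})), or of the type in (a) (i.e.\ $Q=E/F$ a pure one-dimensional sheaf and $E\twoheadrightarrow Q$ a strict epimorphism, $F$ of type (\ref{num}) with $\ch_1(F)=\ch_1(E)-\ch_1(Q)=0$).

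Assembling these (assuming $B\omega^2\neq 0$, so that every object of type (\ref{num}) has the same $Z_{\mu_{\sigma}}$-value as $E$): given (a), (b) and a strict monomorphism $0\neq F\hookrightarrow E$ with cokernel $Q$, in the (b)-type case (b) and the first step give $Z_{\mu_{\sigma}}(F)\ (\prec)\ \preceq Z_{\mu_{\sigma}}(E)=Z_{\mu_{\sigma}}(Q)$, and in the (a)-type case (a) and the first step give $Z_{\mu_{\sigma}}(F)=Z_{\mu_{\sigma}}(E)\ (\prec)\ \preceq Z_{\mu_{\sigma}}(Q)$; in either case $E$ is $\mu_{\sigma}$-limit (semi)stable by Lemma~\ref{same}. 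Conversely, if $E$ is $\mu_{\sigma}$-limit (semi)stable, applying the defining inequality to a pure one-dimensional subsheaf $F\hookrightarrow E$ yields $Z_{\mu_{\sigma}}(F)\ (\prec)\ \preceq Z_{\mu_{\sigma}}(E/F)=Z_{\mu_{\sigma}}(E)$, which by the first step is (\ref{ineq2}), and applying it to the kernel of a strict epimorphism $E\twoheadrightarrow G$ onto a pure one-dimensional $G$ yields $Z_{\mu_{\sigma}}(E)=Z_{\mu_{\sigma}}(F)\ (\prec)\ \preceq Z_{\mu_{\sigma}}(G)$, which is (\ref{ineq1}).

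I expect the most delicate points to be: the homological bookkeeping in the second step, inside the tilted category $\aA^p$ and its torsion class $\aA_{1/2}^p$ (that the cokernels stay in $\aA_{1/2}^p$ by strictness, and that $\hH^{-1}[1]$ of a rank-zero object of $\aA_{1/2}^p$ is an $\aA_{1}^{p}$-subobject); and the degenerate locus $B\omega^2=0$ (which includes the case $\sigma=i\omega$ relevant to the applications), where $\mu_{0}=0$ but $Z_{\mu_{\sigma}}(A)$ tends to $-1$ of order $m^{-3}$ rather than $m^{-1}$, so the leading-order comparison of the $\mathbb{R}(m)$-valued functions no longer decides matters and one must invoke the weak-stability-function property $E=F+Q$ together with the subleading coefficient $v_{3}^{B}$ to complete the comparisons.
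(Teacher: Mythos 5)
Your argument is essentially the paper's: reduce to strict monomorphisms via Lemma~\ref{same}; observe that in a strict short exact sequence $0\to F\to E\to Q\to 0$ in $\aA_{1/2}^p$ with both terms nonzero, exactly one of $F,Q$ is a pure one-dimensional sheaf and the other has Chern character $(-1,0,\ast,\ast)$ (this is precisely what the paper outsources to the argument of \cite[Lemma~3.4]{Tolim}, cf.\ Lemma~\ref{lem:eff}); and then compare slopes. Your explicit formulas $Z_{\mu_\sigma}(G)=-(m-\mu_\sigma(G))/(m+\mu_\sigma(G))$ and $Z_{\mu_\sigma}(A)=-(m-\mu_0)/(m+\mu_0)$ with $\mu_0=-3B\omega^2/\omega^3$ are exactly the content of Lemma~\ref{lem:below}. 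The one point to clear up is your closing worry about the locus $B\omega^2=0$: it rests on a misreading of the operation $f\mapsto f^{\dag}$. The condition $a_d(\sigma)\not\equiv 0$ in its definition means \emph{not identically zero as a function on} $A(X)_{\mathbb{C}}$, so the truncation degree is uniform in $\sigma$; for $A$ of type (\ref{num}) one always has $Z_{\sigma_m}^{\dag}(A)=\tfrac{1}{2}m^2\omega^2 B+\tfrac{1}{6}m^3\omega^3 i$, even when $B\omega^2$ happens to vanish at the chosen $\sigma$ (the real part is then simply $0$, and $Z_{\mu_\sigma}(A)=-1=-(m-\mu_0)/(m+\mu_0)$ with $\mu_0=0$). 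Hence your identity for $Z_{\mu_\sigma}(A)$ is exact for every $\sigma$, it depends only on $\ch_0,\ch_1$, the comparison in $\mathbb{R}(m)$ is the exact equivalence $Z_{\mu_\sigma}(A)\preceq Z_{\mu_\sigma}(G)\Leftrightarrow \mu_0\le\mu_\sigma(G)$ with no subleading analysis required, and your parenthetical restriction to $B\omega^2\neq 0$ can simply be deleted. This is worth getting right, since $\sigma=i\omega$ is exactly the case used to define $L^{eu}_{n,\beta}$ later in the paper.
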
 
\begin{proof}
By Lemma~\ref{same} and
applying the same argument of~\cite[Lemma~3.4]{Tolim}, 
an object $E\in \aA_{1/2}^{p}$ of numerical type $v$
is $Z_{\mu_{\sigma}}$-limit semistable if and only if 

(a') For any pure one dimensional sheaf $G\neq 0$
which admits an exact sequence  
 $0 \to F \to E\to G\to 0$ in 
$\aA_{1/2}^p$, one 
has $Z_{\mu_{\sigma}}(F) \preceq Z_{\mu_{\sigma}}(G)$. 

(b') For any pure one dimensional sheaf $F\neq 0$ which admits 
an exact sequence 
 $0 \to F \to E\to G \to 0$ in 
$\aA_{1/2}^p$, one 
has $Z_{\mu_{\sigma}}(F) \preceq Z_{\mu_{\sigma}}(G)$. 

By Lemma~\ref{lem:below} below, the
 conditions (a'), (b') are equivalent to (a), (b)
respectively. 
\end{proof}

\begin{lem}\label{lem:below}
Take $v_1, v_2 \in C(\aA_{1/2}^p)$
with 
$\ch(v_1)=(-1, 0, \beta_1, n_1)$, $\ch(v_2)=(0, 0, \beta_2, n_2)$, 
and $\beta_2 \neq 0$. 
Then $Z_{\mu_{\sigma}}(v_1)\preceq Z_{\mu_{\sigma}}(v_2)$, 
(\mbox{resp. }$Z_{\mu_{\sigma}}(v_1)\succeq Z_{\mu_{\sigma}}(v_2)$,)
if and only if 
\begin{align}\label{ineqmu}
\mu_{\sigma}(v_2) \ge -\frac{3B\omega ^2}{\omega^3}, \quad 
(\mbox{resp. }\mu_{\sigma}(v_2) \le -\frac{3B\omega ^2}{\omega^3}.)
\end{align}
If $\sigma =k\omega +i\omega$ for $k\in \mathbb{R}$, 
(\ref{ineqmu}) is equivalent to 
\begin{align}\label{ineqmu2}
k\ge -\frac{1}{2}\mu_{i\omega}(v_2), \quad 
(\mbox{resp. }k\le -\frac{1}{2}\mu_{i\omega}(v_2). )
\end{align}
\end{lem}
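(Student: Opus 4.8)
The plan is to make both $Z_{\mu_{\sigma}}(v_1)$ and $Z_{\mu_{\sigma}}(v_2)$ completely explicit as rational functions of $m$, and then to read off the equivalence by an elementary comparison in the ordered field $\mathbb{R}(m)$.

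First I would make $Z_{\sigma_m}^{\dag}(v_i)$ explicit. Writing $e^{-B}\ch(v_i)\sqrt{\td_X}=(v_0^B(v_i),v_1^B(v_i),v_2^B(v_i),v_3^B(v_i))$ as in (\ref{explicit}), the given Chern characters yield $v_0^B(v_1)=-1$, $v_1^B(v_1)=B$, and $v_0^B(v_2)=v_1^B(v_2)=0$, $v_2^B(v_2)=\beta_2$, $v_3^B(v_2)=n_2-B\beta_2$. Substituting into (\ref{explicit}) with $\omega$ replaced by $m\omega$ gives
$$\Ree Z_{\sigma_m}(v_1)=-v_3^B(v_1)+\frac{1}{2}(B\omega^2)m^2,\qquad \Imm Z_{\sigma_m}(v_1)=(\omega v_2^B(v_1))m+\frac{1}{6}(\omega^3)m^3,$$
$$Z_{\sigma_m}(v_2)=(B\beta_2-n_2)+(\omega\beta_2)m\,i.$$
Because $\omega$ is ample and $\beta_2\neq 0$ is effective, the coefficients $\frac{1}{2}B\omega^2$, $\frac{1}{6}\omega^3$, $\omega\beta_2$ and $B\beta_2-n_2$ are not identically zero as functions of $\sigma$ on $A(X)_{\mathbb{C}}$, so applying $f\mapsto f^{\dag}$ gives
$$Z_{\sigma_m}^{\dag}(v_1)=\frac{1}{2}(B\omega^2)m^2+\frac{1}{6}(\omega^3)m^3\,i,\qquad Z_{\sigma_m}^{\dag}(v_2)=(B\beta_2-n_2)+(\omega\beta_2)m\,i;$$
note that $B\beta_2-n_2=-\mu_{\sigma}(v_2)\cdot(\omega\beta_2)$ by (\ref{mu}), so $Z_{\sigma_m}^{\dag}(v_2)$ is governed entirely by $\mu_{\sigma}(v_2)$.

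Next I would pass to $Z_{\mu_{\sigma}}$. From $e^{-\pi i/4}(a+bi)=\frac{1}{\sqrt 2}\big((a+b)+(b-a)i\big)$ one has $Z_{\mu_{\sigma}}=-\frac{a+b}{b-a}$ when $Z_{\sigma_m}^{\dag}=a+bi$ (and $b-a>0$ for $m\gg 0$, consistent with the well-definedness of $Z_{\mu_{\sigma}}$); clearing common factors gives, with $c\cneq\frac{3B\omega^2}{\omega^3}$ and $\mu\cneq\mu_{\sigma}(v_2)$,
$$Z_{\mu_{\sigma}}(v_1)=-\frac{m+c}{m-c},\qquad Z_{\mu_{\sigma}}(v_2)=-\frac{m-\mu}{m+\mu}.$$
Then $Z_{\mu_{\sigma}}(v_1)\preceq Z_{\mu_{\sigma}}(v_2)$ means $\frac{m+c}{m-c}\ge\frac{m-\mu}{m+\mu}$ for $m\gg 0$; since both denominators are positive there, this is $(m+c)(m+\mu)\ge(m-\mu)(m-c)$, i.e. $2(c+\mu)m\ge 0$ for $m\gg 0$, i.e. $c+\mu\ge 0$, i.e. $\mu_{\sigma}(v_2)\ge-\frac{3B\omega^2}{\omega^3}$. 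Reversing all the inequalities gives the parenthetical "resp." statement, and the borderline $\mu=-c$ is consistent because then $Z_{\mu_{\sigma}}(v_1)=Z_{\mu_{\sigma}}(v_2)$ identically. For the last claim I would substitute $B=k\omega$: then $B\omega^2=k\omega^3$, so $c=3k$, while $\mu_{\sigma}(v_2)=\frac{n_2-k\omega\beta_2}{\omega\beta_2}=\mu_{i\omega}(v_2)-k$; hence $\mu_{\sigma}(v_2)\ge-\frac{3B\omega^2}{\omega^3}$ reads $\mu_{i\omega}(v_2)\ge-2k$, i.e. $k\ge-\frac{1}{2}\mu_{i\omega}(v_2)$, and symmetrically for $\le$.

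The computations are routine; the one point needing care is that $f\mapsto f^{\dag}$ is to be applied to $\Ree Z_{\sigma_m}$ and $\Imm Z_{\sigma_m}$ as polynomials in $m$ whose coefficients are functions of $\sigma\in A(X)_{\mathbb{C}}$, so that the degree in $m$ retained is the generic one — for example the $m^2$-term of $\Ree Z_{\sigma_m}(v_1)$ is kept even though its coefficient $\frac{1}{2}B\omega^2$ vanishes at those $\sigma$ with $B=0$. This is exactly what makes the borderline $\mu=-c$ come out as a genuine equality, and apart from it the argument is only bookkeeping of the factor $e^{-\pi i/4}$ and of signs.
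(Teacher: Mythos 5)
Your proposal is correct and takes essentially the same approach as the paper: both compute $Z_{\sigma_m}^{\dag}(v_1)=\frac{1}{2}m^2\omega^2B+\frac{1}{6}m^3\omega^3 i$ and $Z_{\sigma_m}^{\dag}(v_2)=(B\beta_2-n_2)+m\omega\beta_2 i$ explicitly and reduce the comparison in $\mathbb{R}(m)$ to the single inequality $\mu_{\sigma}(v_2)\ge -3B\omega^2/\omega^3$, followed by the same substitution $B=k\omega$. The only difference is cosmetic: you spell out the cross-multiplication $(m+c)(m+\mu)\ge(m-\mu)(m-c)$ that the paper compresses into the step ``$-\frac{m^2\omega^2B/2}{m^3\omega^3/6}\preceq \frac{\mu_{\sigma}(v_2)}{m}$''.
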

\begin{proof}
An easy computation shows, 
\begin{align}\notag
Z_{\sigma_m}^{\dag}(v_1)
&=\frac{1}{2}m^2\omega^2 B +\frac{1}{6}m^3 \omega^3 i, \\
\label{compute}
Z_{\sigma_m}^{\dag}(v_2)&=-n_2+B\beta_2 +m\omega \beta_2 i.
\end{align}
Since $\omega\beta_2 >0$, we have 
\begin{align*}
Z_{\mu_{\sigma}}(v_1)\preceq Z_{\mu_{\sigma}}(v_2) \quad &
\Leftrightarrow \quad -\frac{m^2 \omega^2 B/2}{m^3 \omega^3 /6}
\preceq \frac{\mu_{\sigma}(v_2)}{m} \\
& \Leftrightarrow \quad \mu_{\sigma}(v_2) \ge
-\frac{3\omega^2B}{\omega^3}.
\end{align*}
If $\sigma=k\omega+i\omega$ with
$k\in \mathbb{R}$, then 
$\mu_{\sigma}(v_2)=\mu_{i\omega}(v_2)-k$ and 
$-3B\omega^2/\omega^3=-3k$. Hence (\ref{ineqmu}) 
is equivalent to (\ref{ineqmu2}). 
\end{proof}

\subsection{Moduli theory of $\mu$-limit semistable objects}
In this subsection, we establish a moduli theory of 
$\mu$-limit semistable objects. 
In~\cite[Theorem~1.1]{Tolim}, 
a moduli theory of $\sigma$-limit 
stable objects is studied, 
and the resulting moduli space is an algebraic subspace of 
Inaba's algebraic space~\cite{Inaba}. 
Since we need a moduli theory not only for stable 
objects but also semistable objects, the resulting 
space should not be an algebraic space in general, 
but an Artin stack.
So instead of working with Inaba's algebraic space, 
we use Lieblich's algebraic stack 
of objects $E\in D^b(X)$, 
satisfying the condition, 
\begin{align}\label{negzero}
\Ext_X^{i}(E, E)=0, \quad \mbox{ for all }i<0,
\end{align}
which we denote by $\mathfrak{M}$. More precisely, the stack 
$\mathfrak{M}$ is defined by the 2-functor, 
$$\mathfrak{M} \colon (\Sch/\mathbb{C}) \lr (\mbox{groupoid}), $$
which takes a $\mathbb{C}$-scheme $S$ to the groupoid
$\mathfrak{M}(S)$, whose objects consist of relatively perfect 
object $\eE \in D^b(X\times S)$
such that $\eE_s$ satisfies (\ref{negzero}) for any 
closed point $s\in S$. 
Lieblich~\cite{LIE} shows the following. 
\begin{thm}\emph{(\bf{\cite{LIE}})}
The 2-functor $\mathfrak{M}$ is an Artin stack of locally finite type. 
\end{thm}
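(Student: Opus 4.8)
The plan is to recover Lieblich's argument, which amounts to verifying Artin's representability criteria for the $2$-functor $\mathfrak{M}$. I would organize this in two stages: first, that $\mathfrak{M}$ is a stack for the fppf topology, and second, that it is algebraic, with representable diagonal and locally of finite type. Throughout, the defining condition (\ref{negzero}) is what rigidifies the moduli problem; it should be invoked repeatedly.

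\textbf{Stack property and the diagonal.} The only nontrivial point in showing $\mathfrak{M}$ is a stack is effectivity of fppf descent data. Given $\eE$ defined fppf-locally on $X\times S\to S$, the obstruction to gluing the local pieces to a global object of $D^b(X\times S)$ lies in negative self-$\Ext$ groups; by cohomology and base change for the relatively perfect complex $\mathbf{R}\HOM_{X\times S/S}(\eE,\eE)$, the hypothesis $\Ext^i_{X_s}(\eE_s,\eE_s)=0$ for $i<0$ and all $s$ forces $\hH^{<0}$ of this complex to vanish, which makes descent effective (the derived-category analogue of gluability of sheaves with no negative self-$\Ext$). The same input shows the automorphism sheaf of any family is the sheaf of units in the coherent sheaf of algebras $\hH^0\mathbf{R}\HOM_{X\times S/S}(\eE,\eE)$, hence an affine group scheme of finite type over $S$. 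To get that $\mathfrak{M}\to\mathfrak{M}\times_{\mathbb{C}}\mathfrak{M}$ is representable by algebraic spaces, quasi-compact and separated, one reduces to showing that for two families $\eE_1,\eE_2$ over $S$ the functor $T\mapsto\Isom_{X\times T}(\eE_{1,T},\eE_{2,T})$ is an algebraic space of finite type over $S$: represent $\mathbf{R}\HOM_{X\times S/S}(\eE_1,\eE_2)$ by a bounded complex of vector bundles (properness of $X$ plus relative perfectness), and realize $\Isom$ inside the corresponding linear ``space of morphisms'' cut out by an invertibility condition, which is open.

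\textbf{The remaining Artin criteria.} I would then check: (i) $\mathfrak{M}$ is limit-preserving, since a relatively perfect complex together with the open condition (\ref{negzero}) descends to a finitely presented subring, perfect complexes being Zariski-locally finite complexes of finite projective modules; (ii) effectivity of formal objects, i.e. a compatible system of objects of $\mathfrak{M}(\Spec R/\mathfrak{m}^n)$ over a complete local Noetherian $\mathbb{C}$-algebra $R$ algebraizes to an object of $\mathfrak{M}(\Spec R)$, which is the derived Grothendieck existence theorem for perfect complexes on the projective (hence proper) $X$; (iii) a deformation and obstruction theory, with tangent space $\Ext^1_X(E,E)$, obstructions in $\Ext^2_X(E,E)$ and automorphisms $\Hom_X(E,E)$ — all finite dimensional since $X$ is proper — organized by the coherent relative complex $\mathbf{R}\HOM_{X\times S/S}(\eE,\eE)[1]$, which feeds Artin's criterion on deformation functors; and (iv) Artin approximation, upgrading the resulting versal formal deformation to an \'etale presentation. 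Openness of (\ref{negzero}) inside the larger stack of relatively perfect complexes follows from upper semicontinuity of $s\mapsto\dim\Ext^i_{X_s}(\eE_s,\eE_s)$.

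\textbf{Main obstacle.} The hard part is step (ii): unlike for coherent sheaves one cannot directly quote the classical Grothendieck existence theorem, because perfect complexes do not form an abelian category, so one must work with explicit pseudo-coherent (or, after twisting, two-term) models and control the homological gluing of the formal pieces — precisely where the vanishing (\ref{negzero}) is used once more to remove the ambiguity in the choices. A close second is the quasi-compactness and separatedness of the diagonal in the stack property, which again rests on representing $\mathbf{R}\HOM$ by a finite complex of bundles and analyzing the invertibility locus.
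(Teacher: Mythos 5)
The paper does not prove this statement at all: it is quoted verbatim from Lieblich's \emph{Moduli of complexes on a proper morphism} and used as a black box, so there is no internal argument to compare yours against. Judged against Lieblich's actual proof, your outline is a faithful reconstruction of his strategy: the vanishing of negative self-Ext groups is precisely his ``(universal) gluability'' hypothesis, introduced exactly so that fppf descent of objects of the derived category becomes effective and so that the Hom-presheaves are sheaves; the diagonal is handled by representing the relative $\mathbf{R}\HOM$ complex by a finite complex of vector bundles and taking the open invertibility locus; the deformation theory of a complex has tangent space $\Ext^1(E,E)$, obstructions in $\Ext^2(E,E)$ and infinitesimal automorphisms $\Hom(E,E)$, fed into Artin's criterion; and the genuinely hard step is, as you say, algebraization of formal objects, for which Lieblich proves a Grothendieck existence theorem for complexes by inducting on the length of the interval in which the cohomology sheaves live and reducing to the classical statement for coherent sheaves via the t-structure. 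Two small refinements: openness of the condition $\Ext^{i}_{X_s}(\eE_s,\eE_s)=0$ for $i<0$ needs not just upper semicontinuity of individual $\Ext$ dimensions but boundedness of the relevant range, which comes from relative perfectness (only finitely many $i$ can contribute); and the descent argument is cleanest when phrased as the statement that $T\mapsto\Hom_{D(X_T)}(\eE_T,\fF_T)$ is a sheaf whenever the target has no negative self-Exts, from which effectivity of descent data for objects follows formally. Neither affects the correctness of your sketch; it is an accurate account of the cited proof rather than of anything in this paper.
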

For $v\in C(\aA_{1/2}^p)$ as in (\ref{num}), we
consider a moduli problem of $\mu$-limit 
semistable objects of numerical type 
$v' \in C_{\le v}(\aA_{1/2}^p)$, 
where $C_{\le v}(\aA_{1/2}^p)$ is given in (\ref{Cle}). 
 First we show the following. 
\begin{lem}\label{lem:eff}
For any $v' \in C_{\le v}(\aA_{1/2}^p)$, we have one 
of the following. 
\begin{itemize}
\item There is $\beta' \in C(X)$ and $n'\in \mathbb{Z}$ such that 
$\ch(v')=(-1, 0, \beta', n')$.
\item We have $\ch(v')=(-1, 0, 0, 0)$. 
\item There is $\beta' \in C(X)$ and $n' \in \mathbb{Z}$
such that $\ch(v')=(0, 0, \beta', n')$. 
\end{itemize}
\end{lem}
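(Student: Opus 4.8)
The plan is to pin down $\ch_0(v')$ first, then $\ch_1(v')$ and $\ch_2(v')$, bootstrapping from $\ch_0$. Every object $E\in\aA^p$ satisfies $\ch_0(E)=\ch_0(\hH^0(E))-\ch_0(\hH^{-1}(E))=-\rk\hH^{-1}(E)\le 0$ by Definition~\ref{defiA}, since $\hH^0(E)\in\Coh_{\le 1}(X)$ has rank $0$. By definition of $C_{\le v}(\aA_{1/2}^p)$ there is $v''\in C(\aA_{1/2}^p)$ with $v'+v''=v$, and $\ch_0(v)=-1$; so $\ch_0(v'),\ch_0(v'')\le 0$ sum to $-1$, whence $\ch_0(v')\in\{0,-1\}$ with $\ch_0(v'')=-1-\ch_0(v')$. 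These two possibilities separate the last two bullets from the first.

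First I would treat $\ch_0(v')=0$. Choose $E'\in\aA_{1/2}^p$ with $[E']=v'$. The structural input is that $\hH^{-1}(E)$ is torsion free for every $E\in\aA_{1/2}^p$: it lies in $\Coh_{\ge 2}(X)$ by Definition~\ref{defiA}, so a nonzero torsion subsheaf $T$ would be pure of dimension $2$, whence $T[1]$ is a nonzero object of $\aA_1^p$ admitting a monomorphism into $E$ through the canonical exact sequence $0\to\hH^{-1}(E)[1]\to E\to\hH^0(E)\to 0$ in $\aA^p$ (cf.~\cite{Tolim}), contradicting $E\in\aA_{1/2}^p$. When $\ch_0(E')=0$ this torsion free sheaf has rank $0$, hence vanishes, so $E'=\hH^0(E')\in\Coh_{\le 1}(X)$; moreover $E'$ has no nonzero $0$-dimensional subsheaf, since such a subsheaf is a subobject of $E'$ in $\aA^p$ lying in $\aA_1^p$. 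Thus $E'$ is a nonzero pure $1$-dimensional sheaf, so $\ch_1(v')=0$, $\ch_2(v')=[\Supp E']\in C(X)$ and $\ch_3(v')=\chi(E')\in\mathbb{Z}$: the third bullet.

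Next, the case $\ch_0(v')=-1$, so $\ch_0(v'')=0$. Applying the previous paragraph to $v''$ gives $\ch(v'')=(0,0,\beta'',n'')$ with $\beta''\in C(X)$, $n''\in\mathbb{Z}$; subtracting from (\ref{num}) yields $\ch(v')=(-1,0,\beta-\beta'',n-n'')$, so in particular $\ch_1(v')=0$. Picking $E'\in\aA_{1/2}^p$ with $[E']=v'$, the torsion free rank $1$ sheaf $\hH^{-1}(E')$ has $\ch_1=0$ (since $\ch_1(v')=0$ and $\ch_1(\hH^0(E'))=0$), hence its reflexive hull is $\oO_X$ and $\hH^{-1}(E')\cong I_W$ for a closed subscheme $W$ with $\dim W\le 1$; then $\ch_2(v')=\ch_2(\hH^0(E'))+\ch_2(\oO_W)$ is a sum of effective classes, so $\ch_2(v')\in\overline{C}(X)$, and $\ch_3(v')\in\mathbb{Z}$ (as in~\cite[Remark~3.3]{Tolim}). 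If $\ch_2(v')\neq 0$, this is the first bullet.

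It remains to show that if $\ch_0(v')=-1$ and $\ch_2(v')=0$ then $v'=(-1,0,0,0)$, equivalently $E'\cong\oO_X[1]$; this is the one genuinely delicate step, as it is where spurious $0$-dimensional data must be excluded. From $\ch_2(v')=\ch_2(\hH^0(E'))+\ch_2(\oO_W)=0$ with both summands effective, both $W$ and $\hH^0(E')$ are $0$-dimensional. If $W\neq\emptyset$, then $\oO_W$ is a nonzero object of $\aA_1^p$, and applying $\Hom(\oO_W,-)$ to $0\to I_W\to\oO_X\to\oO_W\to 0$ together with $\Ext^i_X(\oO_W,\oO_X)=0$ for $i\neq 3$ (as $\oO_W$ is Cohen--Macaulay of codimension $3$) yields $0\neq\End(\oO_W)\cong\Ext^1_X(\oO_W,I_W)=\Hom_{\aA^p}(\oO_W,I_W[1])$, which injects into $\Hom_{\aA^p}(\oO_W,E')$ via the monomorphism $I_W[1]=\hH^{-1}(E')[1]\hookrightarrow E'$ in $\aA^p$, contradicting $E'\in\aA_{1/2}^p$. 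Hence $W=\emptyset$ and $\hH^{-1}(E')\cong\oO_X$. Now $E'$ is an extension $0\to\oO_X[1]\to E'\to Q\to 0$ in $\aA^p$ with $Q=\hH^0(E')$ a $0$-dimensional sheaf, classified by $\Ext^1_{\aA^p}(Q,\oO_X[1])=\Ext^2_X(Q,\oO_X)\cong\Ext^1_X(\oO_X,Q)^{\vee}=H^1(X,Q)^{\vee}=0$ by Serre duality and $\omega_X\cong\oO_X$; so $E'\cong\oO_X[1]\oplus Q$, and $Q\neq 0$ would again give a $0$-dimensional subobject of $E'$ lying in $\aA_1^p$, contradicting $E'\in\aA_{1/2}^p$. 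Therefore $Q=0$, $E'\cong\oO_X[1]$, and $\ch(v')=(-1,0,0,0)$. Apart from this last step, the argument is routine bookkeeping with the cohomology sheaves $\hH^{-1},\hH^0$, the torsion pair $(\aA_1^p,\aA_{1/2}^p)$, and effectivity of $\ch_2$.
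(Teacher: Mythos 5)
Your proof is correct and follows essentially the same route as the paper: split on $\ch_0(v')\in\{0,-1\}$, show in the rank-$0$ case that $\hH^{-1}(E')$ vanishes so $E'$ is a one-dimensional sheaf, subtract to handle the rank-$(-1)$ case, and rule out nonzero $\ch_3$ when $\ch_2(v')=0$ by splitting the extension $0\to\hH^{-1}(E')[1]\to E'\to\hH^0(E')\to 0$ against $E'\in\aA_{1/2}^p$. The only differences are that you supply direct arguments (the ideal-sheaf description of $\hH^{-1}(E')$ for effectivity of $\ch_2(v')$, and the $\Ext^1(\oO_W,I_W)$ computation) where the paper simply cites \cite[Lemma~3.2, Remark~3.3]{Tolim}, and one harmless imprecision: the reflexive hull of $\hH^{-1}(E')$ is a numerically trivial line bundle, not necessarily $\oO_X$ (the paper's Calabi--Yau hypothesis does not force $\Pic^0(X)=0$), which changes nothing at the level of Chern characters.
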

\begin{proof}
For $v'\in C_{\le v}(\aA_{1/2}^{p})$, 
let $v''=v-v' \in C(\aA_{1/2}^p)$. 
Since $\ch_0(v)=-1$, we have 
$\ch_0(v')=0$ or $\ch_0(v')=-1$. 
Suppose that $\ch_0(v')=0$ and take $E\in \aA_{1/2}^p$ with 
$[E]=v'$. Then $\hH^{-1}(E)$ must be torsion, hence 
$\hH^{-1}(E)=0$ since $E\in \aA_{1/2}^{p}$.
 Therefore $E$ is a non-zero one dimensional sheaf, 
 thus $\ch(v')=(0, 0, \beta', n')$ for some 
 $\beta'\in C(X)$ and $n' \in \mathbb{Z}$.
  
 In the latter case, 
we have $\ch_0(v'')=0$ thus $\ch_0(v'')=(0, 0, \beta'', n'')$
for some $\beta''\in C(X)$ and $n'' \in \mathbb{Z}$. 
Therefore $\ch_0(v')=(-1, 0, \beta', n')$ for some 
$\beta' \in \overline{C}(X)$ and $n' \in \mathbb{Z}$. 
If $\beta'=0$, then $\hH^{-1}(E)$
is a line bundle 
 and $\hH^0(E)$ is a
zero dimensional sheaf, by~\cite[Lemma~3.2]{Tolim}. Thus 
$E$ is isomorphic to a direct sum of $\hH^{-1}(E)[1]$ and 
$\hH^0(E)$, which contradicts to $E\in \aA_{1/2}^p$
unless $n'=\dim \hH^0(E)=0$. 
\end{proof}
For $\sigma=B+i\omega \in A(X)_{\mathbb{C}}$
and $v' \in C_{\le v}(\aA_{1/2}^p)$, let 
us consider the following (abstract) stacks, 
$$\mathfrak{M}^{v'}(Z_{\mu_{\sigma}}) \subset \mathfrak{Obj}(\aA^p)
\subset \mathfrak{M}, $$
where $\mathfrak{Obj}(\aA^p)$ is the stack of 
objects $E\in \aA^p$, and $\mathfrak{M}^{v'}(Z_{\mu_{\sigma}})$ is 
the stack of $\mu_{\sigma}$-limit semistable objects 
$E\in \aA_{1/2}^p$ of numerical type $v'$. 
We have the following. 
\begin{prop}\label{prop:open}
The substacks $\mathfrak{Obj}(\aA^p)$ and 
$\mathfrak{M}^{v'}(Z_{\mu_{\sigma}})$ are open 
substacks of $\mathfrak{M}$, hence they are 
Artin stacks of locally finite type.
 Moreover $\mathfrak{M}^{v'}(Z_{\mu_{\sigma}})$ is 
of finite type. 
\end{prop}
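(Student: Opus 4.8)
The plan is to deduce all three assertions---openness of $\mathfrak{Obj}(\aA^p)$ in $\mathfrak{M}$, openness of $\mathfrak{M}^{v'}(Z_{\mu_\sigma})$, and finite-typeness of the latter---by reduction to standard boundedness and open-ness statements for coherent sheaves, using the torsion-pair description of $\aA^p$ and the characterization of $\mu_\sigma$-limit semistable objects from Proposition~\ref{prop:char} and Lemma~\ref{lem:below}. First I would handle $\mathfrak{Obj}(\aA^p)\subset\mathfrak{M}$. Given a relatively perfect $\eE\in D^b(X\times S)$ with each $\eE_s$ satisfying (\ref{negzero}), membership of $\eE_s$ in $\aA^p$ amounts to the two conditions $\hH^{-1}(\eE_s)\in\Coh_{\ge 2}(X)$, $\hH^0(\eE_s)\in\Coh_{\le 1}(X)$, and vanishing of $\hH^i(\eE_s)$ for $i\ne -1,0$. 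The locus where the cohomology sheaves are concentrated in degrees $-1,0$ is open by semicontinuity of cohomology of perfect complexes; on that locus the two cohomology sheaves form flat families (after shrinking), and then the conditions ``$\hH^0$ is supported in dimension $\le 1$'' and ``$\hH^{-1}$ has no subsheaf supported in dimension $\le 1$'' are both open---the former because the dimension of support is upper semicontinuous, the latter because $\Coh_{\ge 2}(X)$ is the torsion-free part of a torsion pair and torsion-freeness (here: vanishing of $\Hom$ from dimension-$\le 1$ sheaves, equivalently the associated-prime condition) is an open condition in flat families of sheaves. This gives that $\mathfrak{Obj}(\aA^p)$ is an open substack, hence an Artin stack locally of finite type.

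Next I would show $\mathfrak{M}^{v'}(Z_{\mu_\sigma})$ is open in $\mathfrak{Obj}(\aA^p)$. By Lemma~\ref{lem:eff} the class $v'$ is of one of three shapes, so it suffices to treat each. For $\ch(v')=(0,0,\beta',n')$ the objects in question are one-dimensional sheaves that are $\mu_\sigma$-limit semistable; by Remark~\ref{rmk:twisted} this is $(B,\omega)$-twisted Gieseker semistability, and openness of (twisted) Gieseker semistability in flat families of pure sheaves is classical (Maruyama, Simpson; the twisted case is in Matsuki--Wentworth). The case $\ch(v')=(-1,0,0,0)$ is a single point (the object is forced to be a line bundle shifted, with no zero-dimensional part, by the argument in Lemma~\ref{lem:eff}), so there is nothing to prove. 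For the main case $\ch(v')=(-1,0,\beta',n')$ I would use Proposition~\ref{prop:char}: $E\in\aA^p_{1/2}$ of this numerical type is $\mu_\sigma$-limit semistable iff (a) every pure one-dimensional quotient $G$ (as a strict epi in $\aA^p_{1/2}$) has $\mu_\sigma(G)\ge -3B\omega^2/\omega^3$ and (b) every pure one-dimensional sub $F$ (strict mono) has $\mu_\sigma(F)\le -3B\omega^2/\omega^3$. Each of (a), (b) is the complement of the locus where a destabilizing sub/quotient of bounded numerical invariants exists; destabilizing objects live in a bounded family (their classes $[F]$, resp.\ $[E/F]$, lie in $C_{\le v'}(\aA^p_{1/2})$, which is a finite set once $\beta',n'$ are fixed, and pure one-dimensional sheaves with fixed Chern character form a bounded family), so the destabilized locus is the image of a proper-over-the-base incidence stack, hence closed; its complement is open. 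This yields openness of $\mathfrak{M}^{v'}(Z_{\mu_\sigma})$.

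Finite-typeness of $\mathfrak{M}^{v'}(Z_{\mu_\sigma})$ is then a boundedness statement. Again split by the shape of $v'$: the sheaf case and the point case are immediate, so the content is $\ch(v')=(-1,0,\beta',n')$. Here I would argue exactly as in~\cite[Section~3]{Tolim} for limit stability: an object $E\in\aA^p_{1/2}$ of this type sits in an exact triangle relating $\hH^{-1}(E)$ (a rank-one torsion-free sheaf, hence---using $\det E=\oO_X$ after the numerical normalization, or at least $\ch_0=-1$, $\ch_1=0$---an ideal sheaf twist of bounded invariants) and $\hH^0(E)\in\Coh_{\le 1}(X)$; the semistability inequalities (a),(b) of Proposition~\ref{prop:char} bound $\hH^0(E)$ (its Chern character is determined up to the finitely many splittings of $\beta',n'$, and its slope is pinned by the inequalities), and $\hH^{-1}(E)$ is then bounded because ideal sheaves with fixed Chern character form a bounded family. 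Two bounded families of sheaves glued by an $\Ext^1$ give a bounded family of complexes, so $\mathfrak{M}^{v'}(Z_{\mu_\sigma})$ is of finite type. The hard part, and where I would spend the most care, is the closedness of the ``destabilized locus'' in the openness argument for the mixed type $(-1,0,\beta',n')$: one must genuinely use that $\aA^p_{1/2}$ is of finite length with respect to strict monomorphisms (Remark~\ref{rmk:ob}, from~\cite[Lemma~2.19]{Tolim}) to reduce testing (semi)stability to the finitely many relevant classes $v'\in C_{\le v}(\aA^p_{1/2})$, and one must set up the relative Harder--Narasimhan/incidence stack for the perverse-coherent torsion pair carefully so that properness of the incidence correspondence holds---this is the step that does not follow from off-the-shelf results and must be adapted from the moduli construction in~\cite{Tolim}.
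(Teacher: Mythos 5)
Your proposal is correct and follows essentially the same route as the paper's proof: the paper likewise cites~\cite[Lemma~3.14]{Tolim} for the openness of $\mathfrak{Obj}(\aA^p)$, reduces the sheaf case $\ch(v')=(0,0,\beta',n')$ to openness and boundedness of twisted Gieseker semistability via Remark~\ref{rmk:twisted}, and handles the case $\ch(v')=(-1,0,\beta',n')$ by adapting the boundedness and destabilizing-locus arguments of~\cite[Proposition~3.13, Lemma~3.15, Theorem~3.20]{Tolim} using the characterization in Proposition~\ref{prop:char}. You supply more of the details that the paper delegates to~\cite{Tolim}, and you correctly single out the closedness of the destabilized locus as the step requiring genuine care.
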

\begin{proof}
The openness of $\mathfrak{Obj}(\aA^p)\subset
\mathfrak{M}$ follows from~\cite[Lemma~3.14]{Tolim}. 
Let us take $v' \in C_{\le v}(\aA_{1/2}^p)$. 
Suppose first that $\ch(v')=(0, 0, \beta', n')$
for $\beta' \in C(X)$ and $n' \in \mathbb{Z}$. 
Then any $\mu_{\sigma}$-limit semistable object 
of numerical type $v'$ is a $(B, \omega)$-twisted semistable 
sheaf. 
(cf.~Remark~\ref{rmk:twisted}.)
Then it is well-known that $\mathfrak{M}^{v'}(Z_{\mu_{\sigma}})$
is open in $\mathfrak{M}$, and it is of finite type. 
(cf.~\cite[Proposition~3.9]{ToBPS}.)

Next suppose that $\ch(v')=(-1, 0, \beta', n')$. 
In this case 
the claim for $\mathfrak{M}^{v'}(Z_{\mu_{\sigma}})$ follows 
from the straightforward adaptation of the argument 
of~\cite[Section~3]{Tolim}. 
In fact using Lemma~\ref{prop:char}, we can show the 
 boundedness of $\mu_{\sigma}$-limit semistable 
objects of numerical type $v'$,
 and destabilizing objects in a family of objects
in $\aA_{1/2}^p$, 
 along with the same arguments 
of~\cite[Proposition~3.13, Lemma~3.15]{Tolim}. 
Then the same proof of~\cite[Theorem~3.20]{Tolim}
works to show the 
 openness of $\mathfrak{M}^{v'}(Z_{\mu_{\sigma}})\subset
\mathfrak{M}$. 
The boundedness of relevant $\mu_{\sigma}$-limit 
semistable objects implies
 that $\mathfrak{M}^{v'}(Z_{\mu_{\sigma}})$
is of finite type. 
\end{proof}

\subsection{Stable pairs and $\mu$-limit semistable objects}
The notion of stable pairs and their counting invariants 
are introduced by Pandharipande and 
Thomas~\cite{PT} to interpret the reduced Donaldson-Thomas 
theory geometrically. In~\cite[Section~4]{Tolim}, 
the relationship between 
PT-invariants and counting invariants of 
limit stable objects are discussed. In this subsection we
state the similar result for 
$\mu$-limit semistable objects. 
Since the proofs are straightforward adaptation of
the arguments in~\cite[Section~4]{Tolim}, we again leave the readers to 
check the detail. 
First let us recall the definition of stable pairs. 
\begin{defi}\emph{
A \textit{stable pair} on a Calabi-Yau 3-fold $X$ is data $(F, s)$, 
where $F$ is a pure one dimensional sheaf on $X$, and $s$
is a morphism 
$$s\colon \mathcal{O}_X \lr F, $$
whose cokernel is a zero dimensional sheaf. }
\end{defi}
To simplify the notation, we also include the pair 
$(F=0, s=0)$ in the definition of stable pairs. 
For a stable pair $(F, s)$, we have the associated two
term complex, 
\begin{align}\label{term}
I^{\bullet}=(\oO_X \stackrel{s}{\lr} F) \in D^b(X), 
\end{align}
where $F$ is located in degree zero. 
Note that the object $I^{\bullet}$
satisfies 
\begin{align*}
& I^{\bullet} \in \aA_{1/2}^p, \quad \det I^{\bullet}=\oO_X, \\
& \ch(I^{\bullet})=(-1, 0, \beta, n),
\end{align*}
for $\beta=\ch_2(F)$, $n=\ch_3(F)$. 
By abuse of notation, we also call an object (\ref{term})
as a stable pair. 
In~\cite{PT}, the moduli space of stable pairs
 is constructed as a projective variety, and denoted by 
 $P_{n}(X, \beta)$,
$$P_{n}(X, \beta)\cneq\{ (F, s) \mid (F, s)\text{ is a stable pair with }
 (\ch_2(F), \ch_3(F))=(\beta, n)\}. $$ 
Let
$\mathfrak{Obj}_{0}(\aA^p)$ be the closed fiber at 
the point $[\oO_X] \in \Pic(X)$ of the following morphism, 
$$\det \colon 
\mathfrak{Obj}(\aA^p) \ni E \longmapsto \det E \in \Pic(X).$$
\begin{defi}\emph{\label{def:Lst}
For $\beta \in \overline{C}(X)$, $n\in \mathbb{Z}$, 
and $\sigma \in A(X)_{\mathbb{C}}$, 
define
$\mathfrak{L}_{n}^{\mu_{\sigma}}(X, \beta)$ to be 
\begin{align}\label{Lst}
\mathfrak{L}_{n}^{\mu_{\sigma}}(X, \beta)
\cneq \mathfrak{M}^{v}(Z_{\mu_{\sigma}}) \cap \mathfrak{Obj}_{0}(\aA^p),
\end{align}
where $v\in C(\aA_{1/2}^p)$ satisfies $\ch(E)=(-1, 0, \beta, n)$. 
}
\end{defi}
Note that 
$\mathfrak{L}_{n}^{\mu_{\sigma}}(X, \beta)$ is the moduli stack of
$\mu_{\sigma}$-limit semistable objects $E\in \aA^p$
with $\det E=\oO_X$ and 
$[E]=v$. 
We shall compare $\mathfrak{L}^{\mu_{\sigma}}_n(X, \beta)$
and $P_{n}(X, \beta)$, when $\sigma$ is written as 
$\sigma =k\omega +i\omega$ with $k\in \mathbb{R}$. 
For $\beta \in \overline{C}(X)$, we set 
\begin{align}\label{NE}
&\overline{C}_{\le \beta}(X)\cneq \{ \beta' \in \overline{C}(X) \mid 
\beta-\beta'\in \overline{C}(X)\}, \\
&\notag C_{\le \beta}(X)\cneq \overline{C}_{\le \beta}(X) \setminus \{0\}.
\end{align}
\begin{defi}
\label{defmu}\emph{
For $\beta \in \overline{C}(X)$, 
we define $m(\beta)$ as follows. If $\beta=0$, 
we set $m(\beta)=0$. Otherwise $m(\beta)$ is 
\begin{align*}
m(\beta) \cneq \mathrm{min} \{ \ch_3(\oO_C) \mid 
C\subset X \mbox{ \rm{satisfies} }\dim C=1, 
[C] \in \overline{C}_{\le \beta}(X)\}.
\end{align*}
It is well-known that 
$\overline{C}_{\le \beta}(X)$ is a finite set and $m(\beta)>-\infty$, 
whose proofs are seen in~\cite[Lemma~3.9, Lemma~3.10]{Tolim}.
Thus Definition~\ref{defmu} makes sense. 
For $\beta \in C(X)$
 and $n\in \mathbb{Z}$, we define $\mu_{n, \beta} \in \mathbb{Q}$
to be
 \begin{align}\label{mu}
 \mu_{n, \beta}\cneq \max \left\{ 
 \frac{n-m(\beta-\beta')}{\omega \beta'} : 
\beta' \in C_{\le \beta}(X) \right\}.
 \end{align}}
\end{defi}
The following
is $\mu$-stability version of~\cite[Theorem~4.7]{Tolim}.
\begin{thm}\label{prop:stack}
Let $\sigma=k\omega +i\omega$ for $k\in \mathbb{R}$. We have 
\begin{align}\label{comp1}
\mathfrak{L}^{\mu_{\sigma}}_{n}(X, \beta) 
&\cong [P_{n}(X, \beta)/\mathbb{G}_m], \quad \mbox{ if }k<-\mu_{n, \beta}/2, \\
\label{comp2}
\mathfrak{L}^{\mu_{\sigma}}_{n}(X, \beta) 
&\cong [P_{-n}(X, \beta)/\mathbb{G}_m], \quad \mbox{ if }k>\mu_{-n, \beta}/2.
\end{align}
Here $\mathbb{G}_m$ is acting on $P_{\pm n}(X, \beta)$ trivially. 
\end{thm}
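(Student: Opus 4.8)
The plan is to prove Theorem~\ref{prop:stack} in close parallel to~\cite[Theorem~4.7]{Tolim}, with Proposition~\ref{prop:char} and Lemma~\ref{lem:below} taking the place of the analogous statements there for limit stability. I would first establish the correspondence on closed points and then upgrade it to an isomorphism of stacks. It is enough to treat the isomorphism~(\ref{comp1}): the isomorphism~(\ref{comp2}) follows formally from it by applying the derived dualizing functor $\dR\HOM_X(-,\oO_X)$ with the appropriate shift, which induces the expected symmetry carrying $\mu_\sigma$-limit (semi)stable objects for $\sigma=k\omega+i\omega$ to $\mu_{\sigma'}$-limit (semi)stable objects for $\sigma'=-k\omega+i\omega$ and exchanging $P_n(X,\beta)$ with $P_{-n}(X,\beta)$, exactly as in~\cite[Section~4]{Tolim}.

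For the pointwise statement, fix $\sigma=k\omega+i\omega$ with $k<-\mu_{n,\beta}/2$ and $v\in C(\aA_{1/2}^p)$ with $\ch(v)=(-1,0,\beta,n)$. Given a stable pair $(F,s)$ with $(\ch_2(F),\ch_3(F))=(\beta,n)$, the complex $I^{\bullet}=(\oO_X\stackrel{s}{\lr}F)$ lies in $\aA_{1/2}^p$, has $\det I^{\bullet}=\oO_X$ and $\ch(I^{\bullet})=v$, and I would check conditions (a), (b) of Proposition~\ref{prop:char}. Condition (a) is vacuous: a strict epimorphism $I^{\bullet}\twoheadrightarrow G$ in $\aA_{1/2}^p$ onto a pure one dimensional sheaf $G$ would present $G$ as a quotient of $\hH^{0}(I^{\bullet})=\Cok(s)$, which is zero dimensional. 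For (b), a strict monomorphism $F'\hookrightarrow I^{\bullet}$ in $\aA_{1/2}^p$ with $F'$ pure one dimensional identifies $F'$ with a subsheaf of $F$; purity of $F$ and zero dimensionality of $\Cok(s)$ bound $\chi(F')$ from above by $n-m(\beta-\ch_2(F'))$, which is exactly what Definition~\ref{defmu} is designed to record (cf.~\cite[Section~3, 4]{Tolim}), so that $\mu_{i\omega}(F')\le\mu_{n,\beta}$; then the hypothesis $k<-\mu_{n,\beta}/2\le-\tfrac12\mu_{i\omega}(F')$ gives, via~(\ref{ineqmu2}) of Lemma~\ref{lem:below}, the strict inequality required in~(\ref{ineq2}). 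Hence $I^{\bullet}$ is in fact $\mu_\sigma$-limit stable. Conversely, let $E\in\aA_{1/2}^p$ be $\mu_\sigma$-limit semistable of type $v$ with $\det E=\oO_X$. From $\ch_0(E)=-1$, $\ch_1(E)=0$ and the defining property of $\aA_{1/2}^p$, the sheaf $\hH^{-1}(E)$ is torsion free of rank one with trivial reflexive hull, so $\hH^{-1}(E)=I_C$ for a one dimensional subscheme $C\subset X$ with $\oO_C=\oO_X/I_C$ pure, while $\hH^{0}(E)$ is pure one dimensional or zero (cf.~\cite[Lemma~3.2]{Tolim}). If $\hH^{0}(E)$ were nonzero and one dimensional, then $E\twoheadrightarrow\hH^{0}(E)$ would be a strict epimorphism onto a pure one dimensional sheaf with $\ch_2(\hH^{0}(E))=\beta-[C]\neq 0$ and $\ch_3(\hH^{0}(E))=n-\chi(\oO_C)$, so condition (a) would force $\mu_{i\omega}(\hH^0(E))\ge -2k$; but $\chi(\oO_C)\ge m([C])$ yields $\mu_{i\omega}(\hH^0(E))\le (n-m([C]))/(\omega(\beta-[C]))\le\mu_{n,\beta}<-2k$, a contradiction. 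Hence $[C]=\beta$ and $\hH^{0}(E)$ is zero dimensional, so $E\cong I^{\bullet}$ for the stable pair with $F$ the extension of $\hH^{0}(E)$ by $\oO_C$ encoded by $E$; this $(F,s)$ defines a point of $P_n(X,\beta)$.

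To pass to the stack level I would, as in~\cite[Theorem~4.7]{Tolim}, promote both constructions to morphisms of stacks: the universal two term complex on $P_n(X,\beta)\times X$ gives a morphism $P_n(X,\beta)\to\mathfrak{M}^v(Z_{\mu_\sigma})\cap\mathfrak{Obj}_0(\aA^p)$, and for a family $\eE\in D^b(X\times S)$ of $\mu_\sigma$-limit semistable objects with trivial relative determinant, the cohomology sheaves $\hH^{0}(\eE)$ and $\hH^{-1}(\eE)$ are $S$-flat (using the openness and finiteness of Proposition~\ref{prop:open} and the pointwise description), so one recovers a family of stable pairs; the two assignments are mutually inverse. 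Finally, in this chamber every semistable object is stable and $\Hom(I^{\bullet},I^{\bullet})=\mathbb{C}$ for a stable pair by~\cite{PT}, so $\Aut(I^{\bullet})=\mathbb{G}_m$ acts by scalars while $P_n(X,\beta)$ carries the universal family of pairs with trivial automorphisms; this produces the trivial $\mathbb{G}_m$-gerbe $\mathfrak{L}^{\mu_\sigma}_n(X,\beta)\cong[P_n(X,\beta)/\mathbb{G}_m]$.

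The point I expect to be the main obstacle is the pointwise $(\Rightarrow)$ implication, in two respects: controlling precisely which subobjects and quotients in $\aA^p$ are strict monomorphisms or epimorphisms in the quasi-abelian category $\aA_{1/2}^p$ (so that Proposition~\ref{prop:char} applies to them), and verifying the sharp slope estimates in terms of the curve-class invariants $m(\cdot)$ on which the chamber condition $k<-\mu_{n,\beta}/2$ is calibrated. The secondary technical point is the $S$-flatness of the cohomology sheaves $\hH^{i}(\eE)$ in families, which is needed to descend the pointwise bijection to an isomorphism of stacks.
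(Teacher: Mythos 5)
Your pointwise analysis is essentially the paper's: the paper delegates the identification of $\mathbb{C}$-valued points to the argument of \cite[Theorem~4.7]{Tolim}, with Proposition~\ref{prop:char} and the inequality (\ref{ineqmu2}) replacing \cite[Lemma~4.6]{Tolim}, which is exactly the substitution you carry out in detail (vacuousness of condition (a) for a stable pair, the bound $\mu_{i\omega}(F')\le \mu_{n,\beta}$ via $m(\cdot)$ for condition (b), and the converse via the structure of $\hH^{-1}(E)$ and $\hH^{0}(E)$). Your reduction of (\ref{comp2}) to (\ref{comp1}) by the dualizing functor is also consistent with how the paper later proves Lemma~\ref{lem:sym}, where the paper itself only says ``similarly proved.''

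The genuine problem is in your stack-level step. You propose to build an explicit inverse to $[P_n(X,\beta)/\mathbb{G}_m]\to\lL_n^{\mu_\sigma}(X,\beta)$ by asserting that for a family $\eE\in D^b(X\times S)$ the cohomology sheaves $\hH^{-1}(\eE)$ and $\hH^{0}(\eE)$ are $S$-flat. This is false in general: in a family of stable pairs the underlying curve $C$ can degenerate so that $\chi(\oO_C)$ jumps, and then the length of $\Cok(s)=\hH^{0}$ jumps in the opposite direction (only $\ch(\hH^{0})-\ch(\hH^{-1})$ is locally constant), so neither cohomology sheaf is flat and no such inverse can be assembled fibrewise from them. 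The paper avoids constructing an inverse altogether: by \cite[Theorem~2.7]{PT} the stack $[P_n(X,\beta)/\mathbb{G}_m]$ is an \emph{open} substack of $\mathfrak{Obj}_0(\aA^p)$, by Proposition~\ref{prop:open} so is $\lL_n^{\mu_\sigma}(X,\beta)$, and the morphism induced by the universal stable pair is an equivalence on $\mathbb{C}$-valued groupoids (using $\Hom(E,E)=\mathbb{C}$, hence $\Aut(E)=\mathbb{G}_m$, for stable pairs); two open substacks of the same Artin stack related by such a morphism coincide. You already have every ingredient for this argument in hand, so the repair is easy, but as written the flatness claim is a step that would fail.
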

\begin{proof}
The same proof of~\cite[Theorem~4.7]{Tolim}
shows that, if $k<-\mu_{n, \beta}/2$, then 
$E\in \aA_{1/2}^p$ is $\mu_{\sigma}$-limit semistable 
if and only if $E$ is isomorphic to a stable pair (\ref{term}). 
Note that~\cite[Lemma~4.6]{Tolim} is crucial 
in~\cite[Theorem~4.7]{Tolim}, and in our case
Proposition~\ref{prop:char} and 
(\ref{ineqmu2})
are applied instead of~\cite[Lemma~4.6]{Tolim}.  
Thus the $\mathbb{C}$-valued points of 
$\mathfrak{L}^{\mu_{\sigma}}_{n}(X, \beta)$ and $P_n(X, \beta)$ 
are identified. 

The existence of a universal stable pair on $X\times P_{n}(X, \beta)$
(cf.~\cite[Section~2]{PT})
yields a 1-morphism
$P_{n}(X, \beta) \to \lL_n^{\mu_{\sigma}}(X, \beta)$, which descends to
\begin{align}\label{yield}
[P_n(X, \beta)/\mathbb{G}_m]
 \lr \lL_n^{\mu_{\sigma}}(X, \beta).
\end{align}
Since any $E\in P_n(X, \beta)$ is $\tau$-limit 
stable for some $\tau$ by~\cite[Theorem~4.7]{Tolim},
 we have $\Hom(E, E)=\mathbb{C}$
and $\Aut(E)=\mathbb{G}_m$. 
Therefore (\ref{yield}) is an equivalence of groupoids on 
$\mathbb{C}$-valued points. 
As proved in~\cite[Theorem~2.7]{PT}, the 
stack $[P_n(X, \beta)/\mathbb{G}_m]$ is considered as 
an open substack of $\mathfrak{Obj}_0(\aA^p)$. 
 By Proposition~\ref{prop:open}, 
 $\lL_n^{\mu_{\sigma}}(X, \beta)$ is 
also open in $\mathfrak{Obj}_0(\aA^p)$,
hence
(\ref{yield}) 
gives an isomorphism of Artin stacks. 
The isomorphism (\ref{comp2}) is also similarly proved. 
\end{proof}

\section{Generating functions of stable pair invariants}
In this section, we combine the arguments in the previous sections to
show the rationality of the generating functions of 
stable pair invariants. 
As in the previous section, $X$ is a projective 
Calabi-Yau 3-fold, $\aA^p \subset D^b(X)$ is
the heart of a perverse t-structure on $D^b(X)$.

\subsection{Counting invariants of $\mu$-limit stable objects}
In this subsection, we construct counting invariants of 
$\mu$-limit semistable objects. 
Take $v\in C(\aA_{1/2}^p)$ which satisfies (\ref{num})
with $\beta \in \overline{C}(X)$ and $n\in \mathbb{Z}$. 
As in subsection~\ref{Count}, 
there is a Ringel-Hall Lie-algebra 
$\mathfrak{G}(\aA^p)\subset \hH(\aA^p)$
and the elements, 
$$\epsilon^{v'}(Z_{\mu_{\sigma}}) \in 
\mathfrak{G}^{v'}(\aA^p) \subset \hH(\aA^p), $$
for any $v' \in C_{\le v}(\aA_{1/2}^p)$
and $\sigma \in A(X)_{\mathbb{C}}$. Here we have used Theorem~\ref{prop:open}
which ensures the existence of 
$\hH(\aA^p)$ and 
$\epsilon^{v'}(Z_{\mu_{\sigma}})$. 
We also use the following map on $\hH(\aA^p)$ to construct 
the counting invariants, 
\begin{align}\label{Xi}
\Xi\colon \gG(\aA^p) \ni f \longmapsto 
f\cdot [\mathfrak{Obj}_{0}(\aA^p) \hookrightarrow 
\mathfrak{Obj}(\aA^p)] \in \gG(\aA^p).
\end{align}
The product $\cdot$ is given by (\ref{cdot}). 
In the following, we
 use the notation of subsection~\ref{Motivic}.
\begin{defi}\label{def:inv}
\emph{
For $\beta \in \overline{C}(X)$ and $n\in \mathbb{Z}$, 
we define
\footnote{The subscript $\ast ^{eu}$ means ``Euler characteristic '' of the moduli spaces.}
 $P_{n, \beta}^{eu}\in \mathbb{Z}$, 
$L_{n, \beta}^{eu}(\sigma)\in \mathbb{Q}$ and
$N_{n, \beta}^{eu}(\sigma)\in\mathbb{Q}$ 
to be 
\begin{align*}
P_{n, \beta}^{eu} &\cneq e(P_{n}(X, \beta)), \\
L_{n, \beta}^{eu}(\sigma) &\cneq
\Theta_v \Xi \epsilon^{v}(Z_{\mu_{\sigma}}), \quad 
\mbox{ where }\ch(v)=(-1, 0, \beta, n), \\
N_{n, \beta}^{eu}(\sigma) &\cneq 
\Theta_{v'} \epsilon^{v'}(Z_{\mu_{\sigma}}) =
J^{v'}(Z_{\mu_{\sigma}}), \quad 
\mbox{ where }\ch(v')=(0, 0, \beta, n).
\end{align*}
Here $e(\ast)$ is the topological Euler characteristic. 
}
\end{defi}
For simplicity we set 
$$L_{n, \beta}^{eu}\cneq L_{n, \beta}(i\omega), \quad 
N_{n, \beta}^{eu}\cneq N_{n, \beta}(i\omega).$$
\begin{rmk}\label{rmk:invLP}\emph{
Suppose that $\sigma=k\omega +i\omega$ 
for $k\in \mathbb{R}$. Then we have 
$$N_{n, \beta}^{eu}(\sigma)=N_{n, \beta}^{eu}, $$
by noting Remark~\ref{rmk:twisted}. Also if $k<-\mu_{n, \beta}/2$, 
then Theorem~\ref{prop:stack} and Remark~\ref{rmk:eu} 
imply 
$$L_{n, \beta}^{eu}(\sigma)=P_{n, \beta}^{eu}.$$
}
\end{rmk}

 Let us recall that for a fixed $\beta$, the moduli space 
 $P_{n}(X, \beta)$ is empty for a sufficiently negative $n$.
 (See~\cite{PT}.)
 So we can take $N(\beta) \in \mathbb{Z}$ such that 
 \begin{align}\label{Lau}
 P_{n, \beta}^{eu}=0 \quad \mbox{ for }n<N(\beta).
 \end{align}
 In particular the series $P_{\beta}^{eu}(q)$ is a
 Laurent polynomial of $q$. 

\subsection{Generating functions of counting invariants
 of $\mu$-limit semistable objects}
 In this subsection, we study the generating functions 
 of the invariants given in Definition~\ref{def:inv}. 
 Below we fix an ample divisor $\omega$ on $X$ and 
only consider the case $\sigma=k\omega +i\omega$ for 
$k\in \mathbb{R}$. 
For $\sigma=k\omega +i\omega$, we set $\sigma^{\vee}=-k\omega +i\omega$. 
We have the following symmetry for the invariants 
$L_{n, \beta}^{eu}(\sigma)$ and
$N_{n, \beta}^{eu}$. 
\begin{lem}\label{lem:sym}
(i) We have the equalities, 
\begin{align}\label{sym}
L_{n, \beta}^{eu}(\sigma)=L_{-n, \beta}^{eu}(\sigma^{\vee}), \quad 
N_{n, \beta}^{eu}=N_{-n, \beta}^{eu}.
\end{align}
(ii) For $d\cneq \omega \cdot \beta$, we have  
$N_{n+d, \beta}^{eu}=N_{n, \beta}^{eu}$ for any $n\in \mathbb{Z}$. 
\end{lem}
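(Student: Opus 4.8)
The plan is to deduce both identities from (anti-)autoequivalences of $D^b(X)$ that preserve the relevant moduli problems, together with the functoriality of Joyce's construction $v \mapsto \epsilon^v(Z)$ and of the integration map $\Theta$ recalled in subsection~\ref{Motivic}. In particular none of this needs virtual classes, only isomorphisms of the stacks $\mathfrak{M}^{v}(Z_{\mu_{\sigma}})$ and the compatibility of $\Theta$ and $\Xi$ with such isomorphisms.

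For part (i) the key tool is the derived dualizing functor $\mathbb{D} \cneq \dR\HOM(-, \oO_X)[2] \colon D^b(X) \to D^b(X)$, a contravariant autoequivalence with $\mathbb{D}^2 \cong \id$ because $X$ is smooth Calabi--Yau. First I would record its effect on numerical invariants: a Grothendieck--Riemann--Roch computation (using $K_X = \oO_X$) gives $\ch(\mathbb{D}E) = (-\ch_0(E), -\ch_1(E), \ch_2(E), -\ch_3(E))$ and $\det \mathbb{D}E \cong (\det E)^{-1}$, so $\mathbb{D}$ sends a class of type $(-1, 0, \beta, n)$ with trivial determinant to one of type $(-1, 0, \beta, -n)$ with trivial determinant, and a class of type $(0, 0, \beta, n)$ to one of type $(0, 0, \beta, -n)$. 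Next, following the cohomology-sheaf bookkeeping of~\cite[Section~4]{Tolim}, I would check that $\mathbb{D}$ (suitably restricted, as the perverse heart itself is only sent to the ``dual'' perverse heart) carries the $\mu_{\sigma}$-limit semistable objects of type $(-1,0,\beta,n)$ with $\det = \oO_X$ to the $\mu_{\sigma^{\vee}}$-limit semistable objects of type $(-1,0,\beta,-n)$ with $\det = \oO_X$, where $\sigma = k\omega + i\omega$ and $\sigma^{\vee} = -k\omega + i\omega$, and likewise on the side of one-dimensional $\omega$-Gieseker semistable sheaves. Concretely $\mathbb{D}$ exchanges subobjects with quotients (and strict monomorphisms with strict epimorphisms, in the sense of Lemma~\ref{same}), and on twisted slopes one has $\mu_{\sigma^{\vee}}(\mathbb{D}F) = -\mu_{\sigma}(F)$, so the semistability criterion of Proposition~\ref{prop:char} for $E$ with respect to $\sigma$ becomes exactly that for $\mathbb{D}E$ with respect to $\sigma^{\vee}$. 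Hence $\mathbb{D}$ induces isomorphisms of the finite-type moduli stacks $\mathfrak{M}^{v}(Z_{\mu_{\sigma}}) \simto \mathfrak{M}^{\mathbb{D}v}(Z_{\mu_{\sigma^{\vee}}})$ compatible with intersecting with $\mathfrak{Obj}_{0}(\aA^p)$, since triviality of the determinant is preserved.

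I would then promote this to the Ringel--Hall level: a contravariant equivalence of the relevant category induces an isomorphism of stack algebras $\hH(\aA^p) \to \hH(\aA^p)^{\mathrm{op}}$ sending $\delta^{v}(Z_{\mu_{\sigma}})$ to $\delta^{\mathbb{D}v}(Z_{\mu_{\sigma^{\vee}}})$; since every product in the defining sum~(\ref{eps}) for $\epsilon^{v}(Z_{\mu_{\sigma}})$ involves only classes $v_i$ of the common $Z_{\mu_{\sigma}}$-phase, reversing the order of the factors does not change which terms occur, and one obtains $\mathbb{D}_{\ast}\epsilon^{v}(Z_{\mu_{\sigma}}) = \epsilon^{\mathbb{D}v}(Z_{\mu_{\sigma^{\vee}}})$ inside $\mathfrak{G}(\aA^p)$. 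Because the integration maps $\Theta_v$ of subsection~\ref{Motivic} only see virtual Poincar\'e polynomials and are therefore invariant under the induced isomorphism of stacks, and because $\Xi$ in~(\ref{Xi}) is multiplication with the class of $\mathfrak{Obj}_{0}(\aA^p)$, which $\mathbb{D}$ preserves, applying $\Theta \circ \Xi$ (resp. $\Theta$) yields $L_{n, \beta}^{eu}(\sigma) = L_{-n, \beta}^{eu}(\sigma^{\vee})$ (resp. $N_{n, \beta}^{eu}(\sigma) = N_{-n, \beta}^{eu}(\sigma^{\vee})$); the second equality in~(\ref{sym}) then follows since $N_{\ast, \beta}^{eu}(\tau)$ is independent of $\tau = k\omega + i\omega$ by Remark~\ref{rmk:invLP}.

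For part (ii) the relevant functor is the exact autoequivalence $-\otimes \oO_X(\omega)$: tensoring a one-dimensional $\omega$-Gieseker semistable sheaf with $\oO_X(\omega)$ preserves $\omega$-Gieseker semistability (it only shifts the reduced Hilbert polynomial), and by Grothendieck--Riemann--Roch it sends $\ch(F) = (0, 0, \beta, n)$ to $(0, 0, \beta, n + \omega\cdot\beta) = (0,0,\beta, n+d)$; being exact it induces an isomorphism of Ringel--Hall algebras carrying $\epsilon^{(0,0,\beta,n)}(Z_{\mu_{i\omega}})$ to $\epsilon^{(0,0,\beta,n+d)}(Z_{\mu_{i\omega}})$, and applying $\Theta$ gives $N_{n, \beta}^{eu} = N_{n+d, \beta}^{eu}$. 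The main obstacle will be the first two steps of part (i): verifying that the derived dual $\mathbb{D}$, with the correct shift, genuinely carries the relevant $\mu_{\sigma}$-limit semistable objects (which lie in $\aA_{1/2}^p \subset \aA^p$, and whose duals a priori live in a different perverse heart) back into $\aA^p$, and that its order-reversing effect on $Z_{\mu_{\sigma}}$ matches the subobject/quotient swap precisely enough that semistability is intertwined on the nose, including the strict-versus-non-strict inequalities. This is exactly the kind of argument carried out in~\cite[Section~4]{Tolim} for the limit-stable counting invariants; once it is in place, everything afterwards is formal functoriality of Joyce's machinery (Theorem~\ref{Lie}, Theorem~\ref{thm:gen}).
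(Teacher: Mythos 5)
Your proof follows essentially the same route as the paper: part (i) via the dualizing functor $\mathbb{D}=\dR\hH om(-,\oO_X[2])$, which the paper likewise shows (citing \cite[Lemmas~2.18 and 2.27]{Tolim}) preserves $\aA_{1/2}^p$ and intertwines $\mu_{\sigma}$- with $\mu_{\sigma^{\vee}}$-limit semistability, inducing an isomorphism $\hH(\aA^p)\simeq\hH(\mathbb{D}(\aA^p))^{\rm{op}}$ that carries $\delta^{v'}(Z_{\mu_{\sigma}})$ to $\delta^{v'^{\vee}}(Z_{\mu_{\sigma^{\vee}}})$, and part (ii) via tensoring with an ample line bundle of class $\omega$. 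The only blemish is the sign of $\ch_0$ in your general formula for $\ch(\mathbb{D}E)$ (it should be $+\ch_0(E)$, since $\ch_i$ of the derived dual is $(-1)^i\ch_i(E)$ and the shift $[2]$ contributes a factor $+1$), but the specific classes you actually use, $(-1,0,\beta,n)\mapsto(-1,0,\beta,-n)$ and $(0,0,\beta,n)\mapsto(0,0,\beta,-n)$, are correct and agree with the paper.
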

\begin{proof}
(i)
Let $\mathbb{D} \colon D^b(X) \to D^b(X)^{\rm{op}}$ be the 
dualizing functor, 
$$\mathbb{D}(E)=\dR \hH om(E, \oO_X[2]).$$
The functor $\mathbb{D}$ induces an isomorphism of 
rings, 
\begin{align}\label{bbD}
\mathbb{D}\colon \hH(\aA^p) \stackrel{\sim}{\lr}
 \hH(\mathbb{D}(\aA^p))^{\rm{op}}.
\end{align}
On the other hand, 
the functor $\mathbb{D}$ preserves the subcategory 
$\aA_{1/2}^p \subset D^b(X)$ by~\cite[Lemma~2.18]{Tolim}.
 Moreover 
the same argument of~\cite[Lemma~2.27]{Tolim} shows that
an object $E\in \aA_{1/2}^p$ is $\mu_{\sigma}$-limit 
semistable if and only if $\mathbb{D}(E) \in \aA_{1/2}^p$
is $\mu_{\sigma^{\vee}}$-limit semistable. 
Hence the map (\ref{bbD}) takes $\delta^{v'}(Z_{\mu_{\sigma}})$
to $\delta^{v^{'\vee}}(Z_{\mu_{\sigma^{\vee}}})$ for any
$v' \in C_{\le v}(\aA_{1/2}^p)$. Here 
if $v'$ is given by 
$\ch(v')=(r, 0, \beta', n')$ for $r=0$ or $-1$, then
 $v^{'\vee}$ is given by $\ch (v^{'\vee})=(r, 0, \beta', -n')$. 
Hence (\ref{sym}) follows by the definitions of $L_{n, \beta}^{eu}(\sigma)$
and $N_{n, \beta}^{eu}$.

(ii) Let us take an ample line bundle $\lL \in \Pic(X)$ 
with $c_1(\lL)=\omega$. 
The equivalence $\otimes \lL \colon \aA^p \to \aA^p$ 
induces an isomorphism of algebras, 
\begin{align}\label{tensor}
\otimes \lL \colon \hH(\aA^p) \lr \hH(\aA^p). 
\end{align}
On the other hand, it is easy to see that
an object $E\in \aA_{1/2}^p$ is $\mu_{\sigma}$-limit semistable 
if and only if $E\otimes \lL$ is $\mu_{\sigma}$-limit semistable.  
Thus the map (\ref{tensor}) takes $\delta^{v'}(Z_{\mu_{\sigma}})$
to $\delta^{v''}(Z_{\mu_{\sigma}})$, where 
$\ch(v')=(0, 0, \beta, n)$ and $\ch(v'')=(0, 0, \beta, n+d)$. 
Hence we can conclude $N_{n+d, \beta}^{eu}=N_{n, \beta}^{eu}$. 
\end{proof}
Next we show the following finiteness result. 
\begin{lem}\label{lem:finite}
For a fixed $\beta \in \overline{C}(X)$
 and $\sigma=k\omega +i\omega$, the set 
\begin{align}\label{set}
\{n\in \mathbb{Z} \mid L_{n, \beta}^{eu}(\sigma)\neq 0 \}
\end{align}
is a finite set. 
\end{lem}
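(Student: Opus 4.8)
The plan is to deduce the statement from three facts established above: the identification $L_{n,\beta}^{eu}(\sigma)=P_{n,\beta}^{eu}$ inside a suitable chamber (Remark~\ref{rmk:invLP}), the vanishing (\ref{Lau}) of $P_{n,\beta}^{eu}$ for $n$ sufficiently negative, and the symmetry of Lemma~\ref{lem:sym}~(i), which trades the $n\to+\infty$ tail of (\ref{set}) for the $n\to-\infty$ tail. The case $\beta=0$ is immediate: an object of $\aA_{1/2}^p$ of numerical type $(-1,0,0,n)$ decomposes as a direct sum of a shifted line bundle and a zero-dimensional sheaf, and such an object lies in $\aA_{1/2}^p$ only when the zero-dimensional summand vanishes, i.e.\ $n=0$ (cf.~the proof of Lemma~\ref{lem:eff}); hence $L_{n,0}^{eu}(\sigma)=0$ for $n\neq0$. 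So I may assume $\beta\in C(X)$, in which case $C_{\le\beta}(X)$ is a nonempty finite set and $\mu_{n,\beta}$ is defined as in Definition~\ref{defmu}.

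First I would record the asymptotic behaviour of the wall value. Since $\overline{C}_{\le\beta}(X)$ is finite, $\omega\beta'>0$ for every $\beta'\in C_{\le\beta}(X)$, and $m(\beta-\beta')$ is a fixed real number for each such $\beta'$, the formula
\[
\mu_{n,\beta}=\max_{\beta'\in C_{\le\beta}(X)}\frac{n-m(\beta-\beta')}{\omega\beta'}
\]
shows that $\mu_{n,\beta}\to+\infty$ as $n\to+\infty$ and $\mu_{n,\beta}\to-\infty$ as $n\to-\infty$. In particular, for the fixed real number $k$ there is an integer $n_1$ with $k<-\mu_{n,\beta}/2$ whenever $n<n_1$. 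By Remark~\ref{rmk:invLP} we then have $L_{n,\beta}^{eu}(\sigma)=P_{n,\beta}^{eu}$ for all $n<n_1$, and combining with (\ref{Lau}) gives $L_{n,\beta}^{eu}(\sigma)=0$ for $n<\min(n_1,N(\beta))$. Thus the set in (\ref{set}) is bounded below.

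For the upper bound I would apply Lemma~\ref{lem:sym}~(i): $L_{n,\beta}^{eu}(\sigma)=L_{-n,\beta}^{eu}(\sigma^{\vee})$, where $\sigma^{\vee}=-k\omega+i\omega$ is again of the form $k'\omega+i\omega$ with $k'=-k$. Running the previous paragraph with $\sigma$ replaced by $\sigma^{\vee}$ yields an integer $n_2$ with $L_{m,\beta}^{eu}(\sigma^{\vee})=0$ for $m<\min(n_2,N(\beta))$; taking $m=-n$ shows $L_{n,\beta}^{eu}(\sigma)=0$ for $n>-\min(n_2,N(\beta))$, so the set in (\ref{set}) is also bounded above and hence finite.

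The argument is essentially bookkeeping once those inputs are granted, and I do not expect a serious obstacle; the points that need attention are, first, that Remark~\ref{rmk:invLP}---and behind it Theorem~\ref{prop:stack} together with Remark~\ref{rmk:eu}---really yields $L_{n,\beta}^{eu}(\sigma)=P_{n,\beta}^{eu}$ simultaneously for \emph{every} $n$ with $k<-\mu_{n,\beta}/2$, and not just for one fixed $n$, and second, that the threshold $N(\beta)$ in (\ref{Lau}) depends on $\beta$ alone, so that the same bound serves both applications (to $\sigma$ and to $\sigma^{\vee}$). Both are routine to verify.
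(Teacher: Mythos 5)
Your proposal is correct and follows essentially the same route as the paper: the $\beta=0$ case via Lemma~\ref{lem:eff}, a lower bound on the set~(\ref{set}) obtained from Theorem~\ref{prop:stack} (the paper phrases this contrapositively, extracting the explicit bound $n\ge -2k(\omega\beta')+m(\beta-\beta')$ for some $\beta'$ in the finite set $C_{\le\beta}(X)$, which is the same content as your observation that $\mu_{n,\beta}\to-\infty$ as $n\to-\infty$) combined with the vanishing~(\ref{Lau}), and the upper bound by the duality $L_{n,\beta}^{eu}(\sigma)=L_{-n,\beta}^{eu}(\sigma^{\vee})$ of Lemma~\ref{lem:sym}. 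The two caveats you flag are indeed harmless for exactly the reasons you give.
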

\begin{proof}
If $\beta=0$, then $L_{n, \beta}^{eu}(\sigma)=0$
unless $n=0$ by Lemma~\ref{lem:eff}. 
Suppose that $\beta \neq 0$, and
take an integer $N(\beta)$ as in (\ref{Lau}). 
Assume that $L_{n, \beta}^{eu}(\sigma)\neq 0$
for $n<N(\beta)$. Then at least the moduli stack 
$\mathfrak{L}_{n}^{\sigma}(X, \beta)$ is non-empty, 
hence
we must have $k \ge -\frac{1}{2}\mu_{n, \beta}$
by Theorem~\ref{prop:stack}.
By the definition of $\mu_{n, \beta}$, there 
is $\beta' \in C_{\le \beta}(X)$
such that 
\begin{align*}
k\ge -\frac{n-m(\beta-\beta')}{2\omega \beta'}.
\end{align*}
Hence we have either
$$n\ge -2k(\omega \beta')+m(\beta-\beta'), \quad \mbox{or} \quad 
n\ge N(\beta).$$
Thus the set (\ref{set}) is bounded below. 
Since $L_{n, \beta}^{eu}(\sigma)=L_{-n, \beta}^{eu}(\sigma^{\vee})$
by Lemma~\ref{lem:sym}, 
the set (\ref{set}) is also bounded above. 
\end{proof}
\begin{lem}\label{cor:L}
For a fixed $\beta \in \overline{C}(X)$, the generating series 
\begin{align}\label{cor:se}
L_{\beta}^{eu}(q)=\sum_{n\in \mathbb{Z}}L_{n, \beta}^{eu}q^{n}
\end{align}
is a polynomial of $q^{\pm 1}$, hence a rational function of $q$, 
invariant under $q \leftrightarrow 1/q$. 
\end{lem}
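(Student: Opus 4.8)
The plan is to deduce the statement directly from the two lemmas just established, specialized to the self-dual point $\sigma = i\omega$ (equivalently $k=0$ in the parametrization $\sigma = k\omega + i\omega$), together with the convention $L_{n,\beta}^{eu} = L_{n,\beta}^{eu}(i\omega)$ fixed above.

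First I would invoke Lemma~\ref{lem:finite} with $\sigma = i\omega$: it asserts that the set $\{n\in\mathbb{Z} \mid L_{n,\beta}^{eu}(i\omega)\neq 0\} = \{n\in\mathbb{Z}\mid L_{n,\beta}^{eu}\neq 0\}$ is finite. Hence the sum defining $L_{\beta}^{eu}(q)$ in \eqref{cor:se} has only finitely many nonzero terms, so $L_{\beta}^{eu}(q)$ is a Laurent polynomial in $q$, and in particular a rational function of $q$. (One can also note, as a sanity check, that for $\beta=0$ the series reduces to the single term at $n=0$ by Lemma~\ref{lem:eff}.)

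Second, for the invariance under $q\leftrightarrow 1/q$, I would observe that when $\sigma = i\omega$ one has $k=0$, so $\sigma^{\vee} = -0\cdot\omega + i\omega = i\omega = \sigma$; that is, the base point is fixed by the reflection $\sigma\mapsto\sigma^{\vee}$. Lemma~\ref{lem:sym}(i) then gives, for every $n\in\mathbb{Z}$, the equality $L_{n,\beta}^{eu} = L_{n,\beta}^{eu}(i\omega) = L_{-n,\beta}^{eu}(i\omega) = L_{-n,\beta}^{eu}$. Substituting into the generating series yields $L_{\beta}^{eu}(q) = \sum_{n} L_{-n,\beta}^{eu}q^{n} = \sum_{n} L_{n,\beta}^{eu}q^{-n} = L_{\beta}^{eu}(1/q)$, which is the claimed symmetry.

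There is no serious obstacle at this stage: the real content — boundedness of the relevant $\mu_{\sigma}$-limit semistable objects, via Theorem~\ref{prop:stack} and Lemma~\ref{lem:eff}, giving Lemma~\ref{lem:finite}, and the duality functor $\mathbb{D}$ interchanging $\mu_{\sigma}$- and $\mu_{\sigma^{\vee}}$-limit semistability, giving Lemma~\ref{lem:sym} — has already been carried out. The only point requiring care is the bookkeeping identification $L_{n,\beta}^{eu} = L_{n,\beta}^{eu}(i\omega)$ and the observation that $i\omega$ is the fixed point of the duality-induced involution, which is exactly why the $q\leftrightarrow 1/q$ symmetry drops out for free at this particular stability parameter.
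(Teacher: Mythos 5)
Your proof is correct and coincides with the paper's own argument: Lemma~\ref{lem:finite} at $\sigma=i\omega$ gives that $L_{\beta}^{eu}(q)$ is a Laurent polynomial, and Lemma~\ref{lem:sym}(i) together with the observation that $i\omega$ is fixed by $\sigma\mapsto\sigma^{\vee}$ gives $L_{n,\beta}^{eu}=L_{-n,\beta}^{eu}$, hence the $q\leftrightarrow 1/q$ symmetry. No issues.
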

\begin{proof}
By Lemma~\ref{lem:finite}, the series $L_{\beta}^{eu}(q)$ is a polynomial of 
$q^{\pm 1}$. Since 
$$L_{n, \beta}^{eu}=L_{n, \beta}^{eu}(i\omega)=
L_{-n, \beta}^{eu}(i\omega^{\vee})
=L_{-n, \beta}^{eu}(i\omega)=L_{-n, \beta}^{eu}, $$
by Lemma~\ref{lem:sym}, the polynomial $L_{\beta}^{eu}(q)$ is 
invariant under $q\leftrightarrow 1/q$. 
\end{proof}

\begin{lem}\label{lem:N}
The generating series 
$$N_{\beta}^{eu}(q)=\sum_{n\ge 0}nN_{n, \beta}^{eu}q^n, $$
is the Laurent expansion of a rational function of $q$, 
invariant under $q\leftrightarrow 1/q$. 
\end{lem}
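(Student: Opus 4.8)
The plan is to reduce everything to the two structural properties of the sequence $a_n \cneq N_{n,\beta}^{eu}$ recorded in Lemma~\ref{lem:sym}: the reflection $a_n = a_{-n}$ from part (i), and, writing $d \cneq \omega\cdot\beta$, the periodicity $a_{n+d} = a_n$ from part (ii). The case $\beta = 0$ is trivial, since then the moduli stack of one-dimensional sheaves of class $(0,0,0,n)$ is empty, so $N_{n,0}^{eu} = 0$ for all $n$ and $N_0^{eu}(q) = 0$; hence I assume $\beta \neq 0$, in which case $d = \omega\cdot\beta$ is a positive integer because $\omega$ is ample and $\beta$ is a nonzero effective class.

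First I would repackage the periodicity. Because $a_n$ depends only on $n$ modulo $d$, the auxiliary series $F(q) \cneq \sum_{n\ge 0} a_n q^n$ is the Laurent expansion of the rational function $P(q)/(1-q^d)$, where $P(q) = \sum_{j=0}^{d-1} a_j q^j$. Since $N_{\beta}^{eu}(q) = \sum_{n\ge 1} n a_n q^n = q F'(q)$, this already shows that $N_{\beta}^{eu}(q)$ is the Laurent expansion of a rational function of $q$, which is the first assertion.

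For the symmetry $N_{\beta}^{eu}(1/q) = N_{\beta}^{eu}(q)$ the key point is the functional equation $F(q) + F(1/q) = a_0$ of rational functions. To verify it I would rewrite $F(1/q) = -\bigl(\sum_{j=0}^{d-1} a_j q^{d-j}\bigr)/(1-q^d)$ and compare with $F(q) = P(q)/(1-q^d)$: using $a_{d-k} = a_{-k} = a_k$ for $1 \le k \le d-1$ (periodicity followed by reflection — this is exactly where both hypotheses of Lemma~\ref{lem:sym} are needed), all the intermediate-degree terms cancel and one is left with $P(q) - \sum_{j=0}^{d-1} a_j q^{d-j} = a_0(1-q^d)$, hence $F(q) + F(1/q) = a_0$. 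Differentiating this identity gives $F'(1/q) = q^2 F'(q)$, and therefore $N_{\beta}^{eu}(1/q) = q^{-1} F'(1/q) = q F'(q) = N_{\beta}^{eu}(q)$, the claimed invariance.

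I do not anticipate any serious obstacle: once Lemma~\ref{lem:sym} is in hand the argument is a short manipulation of $P(q)/(1-q^d)$. The one place demanding care is the bookkeeping in the identity $P(q) - \sum_j a_j q^{d-j} = a_0(1-q^d)$, where both input symmetries must genuinely be used — periodicity alone does not force the coefficient of $q^k$ in $P(q)$ to match the coefficient of $q^{d-k}$ arising from $F(1/q)$ — and the boundary terms $k=0$ and $k=d$ must be treated separately. (Alternatively, one can bypass the functional equation by expanding $N_{\beta}^{eu}(q)$ as a sum over residue classes $n \equiv r \pmod d$ of explicit rational functions $g_r(q) = q^r\bigl(dq^d/(1-q^d)^2 + r/(1-q^d)\bigr)$ and checking directly that $g_r(1/q) = g_{d-r}(q)$, then invoking $a_{d-r} = a_r$; this is essentially the same computation.)
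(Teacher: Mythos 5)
Your proposal is correct and follows essentially the same route as the paper: both the rationality and the $q\leftrightarrow 1/q$ invariance are deduced from the periodicity $N_{n+d,\beta}^{eu}=N_{n,\beta}^{eu}$ and the reflection $N_{n,\beta}^{eu}=N_{-n,\beta}^{eu}$ of Lemma~\ref{lem:sym} by summing geometric series over residue classes modulo $d$. The only difference is one of bookkeeping: the paper symmetrizes the weighted sum directly, pairing the residues $j$ and $d-j$ and exhibiting each resulting block as an explicitly $q\leftrightarrow 1/q$-invariant rational function, whereas you write $N_{\beta}^{eu}(q)=qF'(q)$ and obtain the invariance by differentiating the functional equation $F(q)+F(1/q)=N_{0,\beta}^{eu}$ --- a slightly slicker packaging of the same computation.
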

\begin{proof}
Let $d\in \mathbb{Z}$ be as in Lemma~\ref{lem:sym}. Applying
 Lemma~\ref{lem:sym}, we have 
\begin{align*}
& N_{\beta}^{eu}(q)  \\
&= \sum_{j=0}^{d-1}\sum_{m\ge 0}
(dm+j)N_{j, \beta}^{eu}q^{dm+j} \\
&= \frac{1}{2}\sum_{j=1}^{d-1}\sum_{m\ge 0}\left\{
(dm+j)N_{j, \beta}^{eu}q^{dm+j}+
(dm+d-j)N_{d-j, \beta}^{eu}q^{dm+d-j}\right\} 
+\sum_{m\ge 0}dm N_{0, \beta}^{eu}q^{dm} \\
&= \sum_{j=1}^{d-1}\frac{N_{j, \beta}^{eu}}{2}\sum_{m \ge 0}\left\{
(dm+j)q^{dm+j}+(dm+d-j)q^{dm+d-j} \right\} 
+N_{0, \beta}^{eu}\sum_{m\ge 0}dm q^{dm}
\end{align*}
We can calculate as 
\begin{align}\notag
& \sum_{m \ge 0}\left\{
(dm+j)q^{dm+j}+(dm+d-j)q^{dm+d-j} \right\} \\
\label{cal1}
&= \frac{(d-j)(q^{j+d}+q^{d-j})+j(q^{j}+q^{2d-j})}{(1-q^d)^2},
\end{align}
and 
\begin{align}\label{cal2}
\sum_{m\ge 0}dm q^{dm} =\frac{dq^d}{(1-q^d)^2}.
\end{align}
Then the assertion follows since (\ref{cal1}), (\ref{cal2}) 
are rational functions of 
$q$, invariant under $q\leftrightarrow 1/q$.
\end{proof}
For a fixed $\beta \in \overline{C}(X)$, we consider the 
following generating series, 
$$P_{\beta}^{eu}(q)=\sum_{n\in \mathbb{Z}}P_{n, \beta}^{eu}q^n.$$
Now we state our main theorem in this paper. 
 \begin{thm}\label{thm:main2}
We have the following equality of the generating series, 
 \begin{align}\label{expan}
 \sum_{\beta}P_{\beta}^{eu}(q)v^{\beta}=
  \left( \sum_{\beta}L_{\beta}^{eu}(q) v^{\beta} \right) \cdot
 \exp \left(\sum_{\beta}N_{\beta}^{eu}(q)v^{\beta}\right). 
 \end{align}
 \end{thm}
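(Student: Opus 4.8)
The plan is to read off (\ref{expan}) from Joyce's wall-crossing formula applied along the path of $\mu$-limit weak stability conditions $Z_{\mu_{\sigma}}$ with $\sigma=k\omega+i\omega$ and $k$ running over $(-\infty,0]$. The two ends of this path carry exactly the information needed: at $k=0$ one has, by Definition~\ref{def:inv}, $L_{n,\beta}^{eu}=\Theta_v\Xi\epsilon^v(Z_{\mu_{i\omega}})$, while for $k$ sufficiently negative $L_{n,\beta}^{eu}(\sigma)=P_{n,\beta}^{eu}$ by Remark~\ref{rmk:invLP} and Theorem~\ref{prop:stack}; moreover the one-dimensional invariants $N_{n,\beta}^{eu}(\sigma)$ do not depend on $k$, again by Remark~\ref{rmk:invLP}. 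So the transformation of the rank $-1$ invariants from $k=0$ down to $k\ll 0$ must be responsible for the exponential factor, and it is cleanest to prove the equivalent identity $\sum_\beta L_\beta^{eu}(q)v^\beta=\left(\sum_\beta P_\beta^{eu}(q)v^\beta\right)\exp\left(-\sum_\beta N_\beta^{eu}(q)v^\beta\right)$, coefficient by coefficient in $q^n v^\beta$.

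So fix $\beta\in\overline{C}(X)$ and $n\in\mathbb{Z}$, let $v$ satisfy $\ch(v)=(-1,0,\beta,n)$, and recall from Remark~\ref{rmk:ob} that $\aA^p_{Z_{\mu_{\sigma}}\prec 0}=\aA_{1/2}^p$. First I would verify that the pair $(Z_{\mu_{\sigma}},Z_{\mu_{i\omega}})$ (with $\sigma=k\omega+i\omega$, $k<0$) satisfies the hypotheses $(\spadesuit')$ of Section~\ref{sub:quasi} relative to $(v,0)$: local finiteness of the changes, and the presence of only finitely many walls in the $k$-interval, so that the transformation sum of Theorem~\ref{thm:gen} is finite. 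The key input is Lemma~\ref{lem:eff}: every $v'\in C_{\le v}(\aA_{1/2}^p)$ has $\ch(v')=(-1,0,\beta',n')$ or $(0,0,\beta',n')$ with $\beta'\in\overline{C}_{\le\beta}(X)$, a finite set, so $\omega\beta'$ is bounded; since a wall at parameter $k$ forces $k=-\frac{1}{2}\mu_{i\omega}(w)$ for a one-dimensional class $w$ with $\omega\ch_2(w)\le\omega\beta$, the integers $\ch_3(w)$ occurring in a bounded $k$-range are bounded, hence so are the $n'$ that can arise. With finiteness in hand I may choose $k$ negative enough that $L_{n',\beta'}^{eu}(\sigma)=P_{n',\beta'}^{eu}$ for every rank $-1$ class $v'$ appearing in the formula.

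Next I would apply $\Theta_v\circ\Xi$ to the wall-crossing formula (\ref{TransU}) of Theorem~\ref{thm:gen} expressing $\epsilon^v(Z_{\mu_{i\omega}})$ through the $\epsilon^{v_i}(Z_{\mu_{\sigma}})$. By Lemma~\ref{lem:eff} each contributing decomposition $v=v_1+\cdots+v_l$ has exactly one summand $v_c$ with $\ch_0(v_c)=-1$, the others being classes $w_j$ of one-dimensional sheaves. Since $\det$ is multiplicative in exact sequences and trivial on one-dimensional sheaves, the operator $\Xi$ distributes onto the $v_c$-factor regardless of its position in the $\ast$-product; hence, rewriting the $\ast$-products as iterated commutators and applying the Lie homomorphism $\Theta$ (Theorem~\ref{Lie}) as in the derivation of Theorem~\ref{prop:trans}, one obtains the graph formula (\ref{Trans2}) with $J^{v_c}(Z)$ replaced by $\Theta_{v_c}\Xi\epsilon^{v_c}(Z_{\mu_{\sigma}})=L_{n_c,\beta_c}^{eu}(\sigma)$ and $J^{w_j}(Z)=N_{n_j,\gamma_j}^{eu}$, where $\gamma_j=\ch_2(w_j)$, $n_j=\ch_3(w_j)$. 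A Riemann--Roch computation on the Calabi--Yau threefold gives $\chi(w,w')=0$ for one-dimensional classes $w,w'$ and $\chi(v_c,w)=-\ch_3(w)$; hence the only connected, simply connected graphs contributing to (\ref{Trans2}) are the stars with center $c$, and each of their edges carries weight $\pm n_j$. This is already the structural reason for the shape $N_\beta^{eu}(q)=\sum_n nN_{n,\beta}^{eu}q^n$ (the factor $n$ is the edge weight) and for the restriction to $n\ge 0$ (a wall with $k<0$ requires $\ch_3(w)>0$).

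Finally, substituting $L_{n_c,\beta_c}^{eu}(\sigma)=P_{n_c,\beta_c}^{eu}$ and forming the generating series in $q$ and $v$, the proof reduces to the combinatorial identity that, for a star configuration with $m=l-1$ leaves, the coefficients $U(\{v_1,\dots,v_l\},Z_{\mu_{\sigma}},Z_{\mu_{i\omega}})$, weighted by $\frac{1}{2^{l-1}}$ and by the signs produced by the edge weights, sum over the $m!$ orderings of the leaves to $(-1)^m/m!$. Granting this, the series telescopes into the equivalent identity stated in the first paragraph, which rearranges to (\ref{expan}). I expect this last combinatorial collapse --- equivalently, the assertion that the wall-crossing along the path is genuinely exponential in the one-dimensional directions --- to be the main obstacle, together with the care needed to propagate the fixed-determinant constraint $\Xi$ through the Ringel--Hall $\ast$-product and the Lie homomorphism $\Theta$.
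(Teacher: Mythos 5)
Your proposal follows essentially the same route as the paper: the same path $\sigma=k\omega+i\omega$ with $k$ running from $0$ to $-\infty$, the same application of Theorem~\ref{thm:gen} with $\Xi$ pushed through the $\ast$-product onto the unique rank $-1$ factor, the same Riemann--Roch/star-graph reduction of (\ref{Trans2}), and the same final combinatorial collapse, which the paper carries out in Lemmas~\ref{unless} and \ref{unless2}, Propositions~\ref{Coe} and \ref{lem:Trans5}, and the symmetric-group rearrangement of subsection~\ref{sub:fin}, arriving at exactly the coefficient $(-1)^{l-1}/(l-1)!$ (i.e.\ $(-1)^m/m!$ for $m=l-1$ leaves) that you predict. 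The step you flag as the main obstacle is indeed where the bulk of the paper's work lies, but your identification of its expected outcome, and of the equivalent identity $\sum_\beta L_\beta^{eu}(q)v^\beta=\bigl(\sum_\beta P_\beta^{eu}(q)v^\beta\bigr)\exp\bigl(-\sum_\beta N_\beta^{eu}(q)v^\beta\bigr)$, is correct.
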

 Combining Theorem~\ref{thm:main2}
 with Lemma~\ref{cor:L} and Lemma~\ref{lem:N}, we obtain the 
 following.
 \begin{cor}\label{cor:main}
 The generating series $P_{\beta}^{eu}(q)$ is the Laurent 
 expansion of a rational function of $q$, invariant under $q\leftrightarrow 
 1/q$. 
 \end{cor}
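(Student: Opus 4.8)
The plan is to read off Corollary~\ref{cor:main} as a purely formal consequence of the product formula \eqref{expan} together with Lemma~\ref{cor:L} and Lemma~\ref{lem:N}. Let $R\subset \mathbb{Q}(q)$ denote the subset of rational functions $f$ with $f(q)=f(1/q)$. The first thing I would record is that $R$ is a subring of $\mathbb{Q}(q)$: it contains $\mathbb{Q}$ and is closed under addition and multiplication, since if $f(q)=f(1/q)$ and $g(q)=g(1/q)$ then $(f+g)(1/q)=(f+g)(q)$ and $(fg)(1/q)=(fg)(q)$. Moreover, any rational function has a Laurent expansion at $q=0$ with finite principal part, so the assignment sending an element of $R$ to its Laurent expansion at $q=0$ is a well-defined injective ring homomorphism $R\hookrightarrow \mathbb{Q}((q))$. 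By Lemma~\ref{cor:L} each $L_\beta^{eu}(q)$ is a Laurent polynomial invariant under $q\leftrightarrow 1/q$, hence lies in $R$; by Lemma~\ref{lem:N} each $N_\beta^{eu}(q)$ is the Laurent expansion at $q=0$ of an element of $R$. All the series in sight—$P_\beta^{eu}(q)$, $L_\beta^{eu}(q)$, $N_\beta^{eu}(q)$—are Laurent series bounded below and expanded at $q=0$, so that the expansion of a product equals the product of the expansions.

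Next I would extract the coefficient of $v^\beta$ from both sides of \eqref{expan}. The left-hand side contributes exactly $P_\beta^{eu}(q)$. For the right-hand side I would expand the exponential as $\exp\bigl(\sum_\gamma N_\gamma^{eu}(q)v^\gamma\bigr)=\sum_{k\ge 0}\frac{1}{k!}\bigl(\sum_\gamma N_\gamma^{eu}(q)v^\gamma\bigr)^k$ and collect the terms of total class $\beta$, giving
\begin{align*}
P_\beta^{eu}(q)=\sum_{\beta_0+\beta_1+\cdots+\beta_k=\beta}
\frac{1}{k!}\,L_{\beta_0}^{eu}(q)\prod_{i=1}^{k}N_{\beta_i}^{eu}(q),
\end{align*}
the sum running over $k\ge 0$ and over decompositions with $\beta_0\in\overline{C}(X)$ and each $\beta_i\in C(X)$ for $i\ge 1$. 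The crucial finiteness point is that, for a fixed $\beta$, this sum has only finitely many nonzero terms: every $\beta_i$ with $i\ge 1$ is a nonzero effective class lying in $\overline{C}_{\le\beta}(X)$, which is a finite set (as recalled in Definition~\ref{defmu}, following \cite[Lemma~3.9, Lemma~3.10]{Tolim}), and since each such $\beta_i$ is a nonzero summand of $\beta$ the number of parts $k$ is bounded. Hence the displayed expression is a genuine finite $\mathbb{Q}$-linear combination of products.

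I would then conclude by the ring argument. Each factor $L_{\beta_0}^{eu}(q)$ and $N_{\beta_i}^{eu}(q)$ is the Laurent expansion of an element of $R$, so every product $\frac{1}{k!}\,L_{\beta_0}^{eu}(q)\prod_i N_{\beta_i}^{eu}(q)$ is the Laurent expansion of an element of $R$; being a finite sum of such, $P_\beta^{eu}(q)$ is itself the Laurent expansion of an element of $R$, i.e.\ a rational function of $q$ invariant under $q\leftrightarrow 1/q$. This is exactly the assertion of the corollary.

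I do not expect a serious obstacle: the statement is formal once Theorem~\ref{thm:main2}, Lemma~\ref{cor:L}, and Lemma~\ref{lem:N} are in hand. The two points demanding care are the finiteness of the coefficient extraction—which rests on the finiteness of $\overline{C}_{\le\beta}(X)$—and the compatibility of Laurent expansions, namely that every series is expanded at $q=0$ and bounded below, so multiplying expansions commutes with expanding the product (immediate here, since $L_{\beta_0}^{eu}(q)$ is an honest Laurent polynomial and each $N_{\beta_i}^{eu}(q)$ is a power series in $q$). It is worth noting that rationality is not endangered by the exponential: because the sum in the exponent runs only over nonzero classes $\gamma\in C(X)$, each $v^\beta$-coefficient of the exponential is a \emph{finite} polynomial in the $N_\gamma^{eu}(q)$ rather than an exponential of a single power series, so it remains within $R$. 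No geometric input beyond the three cited results is required.
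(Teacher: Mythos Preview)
Your proposal is correct and follows exactly the approach the paper intends: the paper's own proof of Corollary~\ref{cor:main} is the single sentence ``Combining Theorem~\ref{thm:main2} with Lemma~\ref{cor:L} and Lemma~\ref{lem:N}, we obtain the following,'' and your write-up is a careful unpacking of precisely that combination, including the finiteness of the $v^\beta$-coefficient extraction via the finiteness of $\overline{C}_{\le\beta}(X)$.
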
 
 The proof of Theorem~\ref{thm:main2} will be given 
 in subsection~\ref{sub:fin} below.

\subsection{Transformation of the invariants 
$L_{n, \beta}^{eu}(\sigma)$}
In this subsection, we investigate the transformation 
formula of our invariants $L_{n, \beta}^{eu}(\sigma)$
under change of $\sigma=k\omega +i\omega$. 
For $\beta \in C(X)$, 
we set $S(\beta) \subset \mathbb{R}$ as
$$S(\beta)\cneq \left\{ \frac{m}{2\omega \beta'} \mid \beta '
\in C_{\le \beta}(X) , m\in \mathbb{Z}\right\}\subset \mathbb{R}.$$
Note that $S(\beta)$ is a discrete subset in $\mathbb{R}$ 
because $C_{\le \beta}(X)$ is a finite set. 
For $k_0 \in S(\beta)$, let 
 $\cC_{\pm}\subset \mathbb{R}\setminus 
S(\beta)$
be the connected components 
 such that $\cC_{-}\subset \mathbb{R}_{<k_0}$
and $\cC_{+}\subset \mathbb{R}_{>k_0}$. 
We take $k_{\pm}\in \cC_{\pm}$ and set 
$\sigma_{\ast}=k_{\ast}\omega +i\omega$ for $\ast=\pm, 0$.
We have the following. 
\begin{lem}\label{domi}
Take $v\in C(\aA_{1/2}^p)$ as in (\ref{num}) and 
$0 \in T=\mathbb{R}(m)$. 
The weak stability condition
$Z_{\mu_{\sigma_0}}$ dominates $Z_{\mu_{\sigma_{\pm}}}$
with respect to $(v, 0)$. 
\end{lem}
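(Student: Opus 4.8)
The plan is to check the two conditions in the definition of ``$Z_{\mu_{\sigma_0}}$ dominates $Z_{\mu_{\sigma_\pm}}$ with respect to $(v,0)$'' one at a time, working in the quasi-abelian setting of subsection~\ref{sub:quasi}. The first condition is immediate: by Remark~\ref{rmk:ob}, equation~(\ref{obv}), one has $\aA^{p}_{Z_{\mu_{\sigma}}\prec 0}=\aA_{1/2}^{p}$ and $\aA^{p}_{Z_{\mu_{\sigma}}\succeq 0}=\aA_{1}^{p}$ for \emph{every} $\sigma\in A(X)_{\mathbb{C}}$, in particular for $\sigma=\sigma_0$ and $\sigma=\sigma_{\pm}$, so these categories coincide as required. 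Hence all the work lies in the second condition: for $v_1,v_2\in C_{\le v}(\aA_{1/2}^{p})$ the relation $Z_{\mu_{\sigma_\pm}}(v_1)\succeq Z_{\mu_{\sigma_\pm}}(v_2)$ must imply $Z_{\mu_{\sigma_0}}(v_1)\succeq Z_{\mu_{\sigma_0}}(v_2)$, and note that by the first condition the index set $C_{\le v}(\aA_{1/2}^{p})$ is the same for $\sigma_0$ and for $\sigma_\pm$.

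First I would use Lemma~\ref{lem:eff} to split each $v_i\in C_{\le v}(\aA_{1/2}^{p})$ into the three possible numerical types: $\ch(v_i)=(-1,0,\beta_i,n_i)$, or $\ch(v_i)=(-1,0,0,0)$, or $\ch(v_i)=(0,0,\beta_i,n_i)$ with $\beta_i\in C(X)$. A direct computation of $Z_{\sigma_m}^{\dag}(v_i)$ as in the proof of Lemma~\ref{lem:below} --- recalling that $f\mapsto f^{\dag}$ keeps the initial term with its coefficient regarded as a function of $\sigma$ --- gives $Z_{\sigma_m}^{\dag}(v_i)=\frac{1}{2}\omega^{2}B\,m^{2}+\frac{1}{6}\omega^{3}m^{3}i$ for the first two types, independently of $\beta_i$ and $n_i$, and $Z_{\sigma_m}^{\dag}(v_i)=(B\beta_i-n_i)+m\omega\beta_i\,i$ for the third. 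From these formulas: any two classes of the first two types have one and the same value $Z_{\mu_{\sigma}}(\,\cdot\,)$ for each fixed $\sigma$; and for two classes of the third type the $Z_{\mu_{\sigma}}$-ordering is controlled by $\mu_{\sigma}(v_1)$ versus $\mu_{\sigma}(v_2)$, which is independent of $k$ because $\mu_{\sigma}(v_1)-\mu_{\sigma}(v_2)=\mu_{i\omega}(v_1)-\mu_{i\omega}(v_2)$ when $\sigma=k\omega+i\omega$. So in these two sub-cases the required implication is trivial (in fact an equality of orderings), and the only genuine case is a comparison between a class $w$ of the first or second type and a class $v'$ of the third type, say with $\ch(v')=(0,0,\beta',n')$.

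For that case I would invoke Lemma~\ref{lem:below}. First, $\beta'\in C_{\le\beta}(X)$: indeed $v-v'\in C(\aA_{1/2}^{p})$ has $\ch_0=-1$, so by Lemma~\ref{lem:eff} its class has $\ch=(-1,0,\beta-\beta',\ast)$ with $\beta-\beta'\in\overline{C}(X)$. Then Lemma~\ref{lem:below}, in the form $k\ge -\frac{1}{2}\mu_{i\omega}(v')$ (resp. $\le$), says that the $Z_{\mu_{\sigma}}$-ordering of $w$ and $v'$ switches precisely at $s(v')\cneq -\frac{n'}{2\omega\beta'}$, and $s(v')\in S(\beta)$ by the very definition of $S(\beta)$ (take the class $\beta'$ and $m=-n'$). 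Now the one real point: since $k_0\in S(\beta)$ while $k_{\pm}\in\cC_{\pm}\subset\mathbb{R}\setminus S(\beta)$ are the components adjacent to $k_0$ from below and above, we have $k_-<k_0<k_+$, the open intervals $(k_-,k_0)$ and $(k_0,k_+)$ contain no point of $S(\beta)$, and $k_{\pm}\neq s(v')$. These facts force $k_{\pm}\le s(v')\Rightarrow k_0\le s(v')$ and $k_{\pm}\ge s(v')\Rightarrow k_0\ge s(v')$, each checked separately for the two signs by combining $k_-<k_0<k_+$ with the absence of $S(\beta)$-points in $(k_-,k_0)$ and in $(k_0,k_+)$. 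Translating back through Lemma~\ref{lem:below} yields $Z_{\mu_{\sigma_\pm}}(w)\succeq Z_{\mu_{\sigma_\pm}}(v')\Rightarrow Z_{\mu_{\sigma_0}}(w)\succeq Z_{\mu_{\sigma_0}}(v')$ and the same with $w$ and $v'$ interchanged, completing the verification. I expect the main obstacle to be organizational rather than substantive: confirming that the three-type reduction of Lemma~\ref{lem:eff} together with Lemma~\ref{lem:below} genuinely covers every pair of classes in $C_{\le v}(\aA_{1/2}^{p})$ and every threshold at which a $Z_{\mu_{\sigma}}$-ordering can change, and that all such thresholds lie in the discrete set $S(\beta)$; once that is pinned down the chamber argument above is elementary.
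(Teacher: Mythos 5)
Your proposal is correct and follows essentially the same route as the paper: reduce to the two ranks $r_i\in\{0,-1\}$ via Lemma~\ref{lem:eff}, dispose of the equal-rank cases using the explicit formulas (\ref{compute}) (rank $-1$ classes all share the same $Z_{\mu_\sigma}$-value, and for rank $0$ classes the ordering is $k$-independent), handle the mixed case via Lemma~\ref{lem:below}, and settle the category condition by Remark~\ref{rmk:ob}. The only difference is that you spell out the chamber argument — that the threshold $-n'/(2\omega\beta')$ lies in $S(\beta)$ and that $k_\pm$ sit in the components adjacent to $k_0$ — which the paper compresses into ``follows from Lemma~\ref{lem:below}''.
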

\begin{proof}
Take $v_1, v_2 \in C_{\le v}(\aA_{1/2}^p)$, and suppose that 
$Z_{\mu_{\sigma_{\pm}}}(v_1) \preceq Z_{\mu_{\sigma_{\pm}}}(v_2)$. 
We want to show that 
\begin{align}\label{want}
Z_{\mu_{\sigma_{0}}}(v_1) \preceq Z_{\mu_{\sigma_{0}}}(v_2).
\end{align}
By Lemma~\ref{lem:eff}, we have $\ch(v_i)=(r_i, 0, \beta_i, n_i)$ 
with $r_i=0$ or $-1$. If $r_1=r_2=-1$, 
it is easy to see that 
$Z_{\mu_{\sigma}}(v_1)=Z_{\mu_{\sigma}}(v_2)$ for any $\sigma$. 
If $r_1=r_2=0$, (\ref{want}) follows easily from (\ref{compute}). 
If $r_1 \neq r_2$, 
 (\ref{want}) follows from Lemma~\ref{lem:below}. 
 Then the assertion follows by noting
Remark~\ref{rmk:ob}.
\end{proof}
\begin{rmk}\emph{
Lemma~\ref{domi} is not true for the limit stability. 
This is the reason why we use $\mu$-limit stability 
rather than the limit stability.} 
\end{rmk}
For simplicity,
we fix $k\in \mathbb{R}_{<0}$ and write
 \begin{align}\label{simplicity}
 Z=Z_{\mu_{\sigma}} \mbox{ for }\sigma=k\omega +i\omega
\mbox{ with }k<0, \quad Z'=Z_{\mu_{i\omega}}.
\end{align}
Applying the results in the previous sections, we obtain the following
proposition. 
\begin{prop}
For $v\in C(\aA_{1/2}^p)$ as in (\ref{num}), and 
$Z, Z'$ as in (\ref{simplicity}) w.r.t. $(v, 0)$, 
 we have the following. 
\begin{align}\label{form2}
\epsilon^{v}(Z')=
\sum_{\begin{subarray}{c}l\ge 1, \ v_i \in C(\aA_{1/2}^p), \\
v_1+\cdots +v_l=v
\end{subarray}}U(\{v_1, \cdots, v_l\}, Z, Z')
\epsilon^{v_1}(Z)\ast \cdots \ast \epsilon^{v_l}(Z).
\end{align}
The sum (\ref{form2}) has only finitely many non-zero terms. 
\end{prop}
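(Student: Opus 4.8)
The plan is to obtain (\ref{form2}) from the quasi-abelian transformation formula (\ref{TransU}) of Theorem~\ref{thm:gen}, applied with $\aA=\aA^p$, with $t=0\in T=\mathbb{R}(m)$, and with the pair $Z,Z'$ of (\ref{simplicity}), and then to prove the finiteness assertion by hand. To set up the situation $(\spadesuit')$, I would interpolate by the family $\sigma_s=s\omega+i\omega$, for $s$ in the closed interval with endpoints $k$ and $0$. By (\ref{obv}) we have $\aA^p_{Z_{\mu_{\sigma_s}}\prec 0}=\aA_{1/2}^p$ for all $s$, and this category is of finite length for strict monomorphisms by Remark~\ref{rmk:ob}; together with Proposition~\ref{prop:open} --- which furnishes the Artin stack $\mathfrak{Obj}(\aA^p)$ of locally finite type and the openness and finite type of $\mathfrak{M}^{v'}(Z_{\mu_{\sigma_s}})$ for all $v'\in C_{\le v}(\aA_{1/2}^p)$ --- this verifies Assumption~\ref{assum2} for $(v,0)$ for every $Z_{\mu_{\sigma_s}}$. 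By Lemma~\ref{lem:eff} and Lemma~\ref{lem:below} (cf. the computation (\ref{compute}) and (\ref{ineqmu2})), the values of $s$ at which $\mu_{\sigma_s}$-limit semistability of a class in $C_{\le v}(\aA_{1/2}^p)$ can change lie in the discrete set $S(\beta)$, and since $\overline{C}_{\le\beta}(X)$ is finite, only finitely many of them, say $k=s_0<s_1<\dots<s_N=0$, lie in $[k,0]$. Taking the $Z_i$ to be $Z_{\mu_{\sigma_s}}$ as $s$ runs through the successive open chambers and the $W_j$ to be $Z_{\mu_{\sigma_{s_j}}}$ at the walls $s_j$, Lemma~\ref{domi} supplies the required domination, and the discreteness of $S(\beta)$ together with the finite type in Proposition~\ref{prop:open} makes all the changes locally finite; hence $(\spadesuit')$ holds and Theorem~\ref{thm:gen} gives (\ref{form2}) with each $v_i\in C(\aA_{1/2}^p)$.

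To prove finiteness, I would argue as follows. Fix a nonzero term $(l;v_1,\dots,v_l)$. By Lemma~\ref{lem:eff} each $v_i$ has $\ch(v_i)=(r_i,0,\beta_i,n_i)$ with $r_i\in\{0,-1\}$, and $\sum_i r_i=\ch_0(v)=-1$ forces a unique index $i_0$ with $r_{i_0}=-1$, every other $v_i$ having $r_i=0$ with $\beta_i\in C(X)$. Since $\sum_i\beta_i=\beta$, since $\beta_{i_0}\in\overline{C}_{\le\beta}(X)$ is a finite set, and since $\omega\cdot\beta_i$ is bounded below by a positive constant on the finite set $C_{\le\beta}(X)$, the integer $l$ is bounded and the classes $\beta_i$ range over a finite set. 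For the distinguished factor, $\epsilon^{v_{i_0}}(Z)\neq 0$ forces $\mathfrak{M}^{v_{i_0}}(Z_{\mu_\sigma})$ to be nonempty; then either $\ch(v_{i_0})=(-1,0,0,0)$, or else the boundedness of $\mu_\sigma$-limit semistable objects of that class --- obtained as in the proof of Lemma~\ref{lem:finite}, via Theorem~\ref{prop:stack} and the $\sigma\leftrightarrow\sigma^{\vee}$ symmetry --- confines $n_{i_0}$ to a finite range.

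The remaining point, which I expect to be the main obstacle, is to bound $n_i$ for $i\neq i_0$. The structural input is that $Z=Z_{\mu_\sigma}$ and $Z'=Z_{\mu_{i\omega}}$ induce one and the same total preorder on the classes of nonzero one-dimensional sheaves --- both restrict there to a monotone function of $\mu_{i\omega}$, using (\ref{compute}) and the identity $\mu_\sigma(\,\cdot\,)=\mu_{i\omega}(\,\cdot\,)-k$ --- and that, by Lemma~\ref{lem:below} and (\ref{ineqmu2}), an $r=0$ class $v_i$ with $\mu_{i\omega}(v_i)<0$ lies strictly below the distinguished $r=-1$ part for both $Z,Z'$, one with $\mu_{i\omega}(v_i)\ge 2\lvert k\rvert$ lies on or above it for both, and only those with $0\le\mu_{i\omega}(v_i)<2\lvert k\rvert$ change sides somewhere in $[k,0]$. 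Combining this with the vanishing of $S(\,\cdot\,,Z_0,Z_0)$ on blocks of length $\ge 2$ (with $Z_0$ the common restriction), I would argue that a factor $v_i$ lying strictly on one fixed side of the $r=-1$ part for both $Z$ and $Z'$ cannot occur in a nonvanishing block and hence contributes $0$ to the coefficient $U(\{v_1,\dots,v_l\},Z,Z')$; thus $U\neq 0$ forces $0\le n_i<2\lvert k\rvert\,\omega\cdot\beta_i$ for each $i\neq i_0$, a finite range once $\beta_i$ is fixed. The hard part is precisely this vanishing of $U$: one must check, for each position of such a $v_i$ relative to $i_0$ inside a block and each admissible $\psi,\xi$ in Definition~\ref{defi:U}, that the alternation conditions (\ref{eith1}), (\ref{eigh2}) fail. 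Granting it, and combining with the bounds above, (\ref{form2}) has only finitely many nonzero terms. (An alternative, avoiding (\ref{TransU}) as a closed formula, is to iterate the elementary identities (\ref{delta}), (\ref{delta2}), (\ref{eps}), (\ref{invert}) across the walls $s_1,\dots,s_{N-1}$, checking at each wall-crossing that only the finitely many $r=0$ classes lying on that wall are introduced, so that finiteness propagates along the chain.)
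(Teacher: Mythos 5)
Your route to (\ref{form2}) is essentially the paper's: Assumption~\ref{assum2} is verified through Proposition~\ref{prop:open} and Remark~\ref{rmk:ob}, the chain of dominations across the finitely many walls of $S(\beta)\cap[k,0]$ comes from Lemma~\ref{domi}, and Theorem~\ref{thm:gen} is then invoked. One small correction to your setup: the paper does not obtain local finiteness from the discreteness of $S(\beta)$ together with finite type; it observes that local finiteness holds \emph{once} the sum (\ref{form2}) is known to have finitely many nonzero terms, so the two assertions of the proposition are established together, with the finiteness deferred. Your finiteness reduction --- $l$ and the $\beta_i$ controlled by finiteness of $C_{\le\beta}(X)$ via Lemma~\ref{lem:eff}, and the remaining freedom confined to the $n_i$ with $i\neq e$ (after which $n_e$ is determined by $n_1+\cdots+n_l=n$, so your separate bound on $n_{i_0}$ is redundant) --- is exactly Proposition~\ref{Coe}(iii). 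The step you explicitly grant, namely that $U(\{v_1,\dots,v_l\},Z,Z')\neq 0$ forces $0\le\mu_i\le -2k$ for $i\neq e$, is precisely the content of Lemma~\ref{unless}, Lemma~\ref{unless2} and Proposition~\ref{Coe}(i), and it is the bulk of the work for this statement: the paper carries out your ``same side for both $Z$ and $Z'$ kills the coefficient'' mechanism at the level of the coefficients $S$ rather than directly on $U$, showing that $S\neq 0$ forces the pattern (\ref{muor}) by excluding indices of type A or B to the left of $e$ and then ruling out the decreasing alternative (\ref{bycont}) using $k<0$; Proposition~\ref{Coe}(i) then passes to $U$ via Definition~\ref{defi:U}, using Lemma~\ref{unless} to see that every $\xi$-block away from the one containing $e$ is a singleton with $\mu=0$. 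So your proposal is correct in structure and identifies the crux accurately, but a complete proof still requires supplying that combinatorial vanishing.
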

\begin{proof}
By Proposition~\ref{prop:open} and Remark~\ref{rmk:ob}, 
$Z, Z'$ satisfy
Assumption~\ref{assum2} w.r.t. $(v, 0)$.  
Furthermore by Lemma~\ref{domi}, the condition $(\spadesuit')$ 
is also satisfied with respect to $(v, 0)$, except the local finiteness 
condition which is satisfied if we knew that (\ref{form2}) 
has only finitely many non-zero terms. The finiteness of (\ref{form2})
will be shown in Proposition~\ref{Coe} (iii) below. 
 Therefore 
(\ref{form2}) follows from Theorem~\ref{thm:gen}. 
\end{proof}
In the formula (\ref{form2}), 
 $\{v_i\}_{i=1}^{l} \subset C(\aA_{1/2}^{p})$ satisfy the following
 by Lemma~\ref{lem:eff}. 
 \begin{align}\label{condition1}
&\mbox{there is  
a unique } 1\le e \le l \mbox{ such that }
\ch(v_i)=(0, 0, \beta_i, n_i) \mbox{ for }i\neq e \\
&\mbox{with }
\beta_i \in C(X), n_i \in \mathbb{Z}, 
\mbox{ and } \ch(v_e)=(-1, 0, \beta_e, n_e)\notag
\mbox{ with }\beta_e \in \overline{C}(X), n_e \in \mathbb{Z}.
\end{align}
Let us see that (\ref{form2}) is a finite sum.
For simplicity we write as 
$$\mu_i \cneq \mu_{i\omega}(v_i)=\frac{n_i}{\beta_i \omega}\in \mathbb{Q}, $$
if $v_i \in C(\aA_{1/2}^p)$ satisfies $\ch(v_i)=(0, 0, \beta_i, n_i)$. 
\begin{lem}\label{unless}
Take $v_1, \cdots, v_l \in C(\aA_{1/2}^{p})$ such that 
$\ch(v_i)=(0, 0, \beta_i, n_i)$ for all $i$ with 
$\beta_i \in C(X)$ and 
$n_i \in \mathbb{Z}$. 
For $Z$, $Z'$ as in (\ref{simplicity}), we have 
$$S(\{v_1, \cdots, v_l\}, Z, Z')= 
\left\{ \begin{array}{cc}1, & n=1, \\
0, & n\ge 2. \end{array} \right. $$
\end{lem}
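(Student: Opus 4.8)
The plan is to show that $S(\{v_1,\cdots,v_l\},Z,Z')$ equals $1$ if $l=1$ and $0$ if $l\ge 2$, by first reducing the $\preceq,\succ$ comparisons that enter the definition of $S$ to inequalities between slopes of one dimensional sheaves, and then ruling out $l\ge 2$ by a see-saw argument.

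First I would record how $Z$ and $Z'$ order rank zero classes. Arguing exactly as in the proof of Lemma~\ref{lem:below}, for a class $w$ with $\ch(w)=(0,0,\delta,N)$, $\delta\in C(X)$, and for $\sigma=k\omega+i\omega$ one has $Z_{\sigma_m}^{\dag}(w)=(-N+k\omega\delta)+m(\omega\delta)i$, so since $\omega\delta>0$ a direct computation gives
\[
Z_{\mu_{\sigma}}(w)=-\frac{m+k-\mu_{i\omega}(w)}{m-k+\mu_{i\omega}(w)}, \qquad \mu_{i\omega}(w)=\frac{N}{\omega\delta}.
\]
Comparing two such classes $w,w'$ one finds that $Z_{\mu_{\sigma}}(w)\succeq Z_{\mu_{\sigma}}(w')$ if and only if $\mu_{i\omega}(w)\ge\mu_{i\omega}(w')$, with equality of the two sides in $T$ precisely when $\mu_{i\omega}(w)=\mu_{i\omega}(w')$; crucially this is \emph{independent of} $k$. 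Hence $Z$ (where $k<0$) and $Z'$ (where $k=0$) induce one and the same total preorder on rank zero classes. Since all the classes entering $S$ — the $v_i$ and the consecutive partial sums $v_1+\cdots+v_i$, $v_{i+1}+\cdots+v_l$, each of rank zero with nonzero effective $\ch_2$ — are used only through $\preceq,\succ$, it follows that $S(\{v_1,\cdots,v_l\},Z,Z')=S(\{v_1,\cdots,v_l\},Z',Z')$ and every comparison in it may be rephrased in terms of slopes.

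Write $\mu_j=\mu_{i\omega}(v_j)\in\mathbb{Q}$, $w_j=\omega\beta_j\in\mathbb{Q}_{>0}$, and let $\bar\mu_i$, $\tilde\mu_i$ be the $w_j$-weighted averages of $\mu_1,\cdots,\mu_i$ and of $\mu_{i+1},\cdots,\mu_l$; thus $\bar\mu_i=\mu_{i\omega}(v_1+\cdots+v_i)$, $\tilde\mu_i=\mu_{i\omega}(v_{i+1}+\cdots+v_l)$, and the total slope $\mu$ of $v_1+\cdots+v_l$ is a positively weighted average of $\bar\mu_i$ and $\tilde\mu_i$, so $\bar\mu_i>\tilde\mu_i \Leftrightarrow \bar\mu_i>\mu$. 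For $l=1$ there is nothing to check and $S=(-1)^0=1$. For $l\ge 2$ I would argue by contradiction: if $S\ne 0$ then for each $i=1,\cdots,l-1$ one of (\ref{eith1}), (\ref{eigh2}) holds, which by the previous paragraph amounts to the equivalence $(\mu_i\le\mu_{i+1})\Leftrightarrow(\bar\mu_i>\mu)$. Running this from $i=1$: if $\mu_1=\bar\mu_1>\mu$ then $\mu_1\le\mu_2$, hence $\mu_2>\mu$, hence $\bar\mu_2>\mu$, hence $\mu_2\le\mu_3$, and inductively $\mu<\mu_1\le\mu_2\le\cdots\le\mu_l$ with every $\mu_j>\mu$, contradicting that $\mu$ is their positively weighted average; symmetrically, if $\mu_1=\bar\mu_1\le\mu$ one is forced into $\mu_1>\mu_2>\cdots>\mu_l$ with $\mu_2,\cdots,\mu_l<\mu$, again impossible. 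Hence $S=0$ when $l\ge 2$.

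The one point requiring care is the bookkeeping of borderline equalities in the see-saw induction (when some $\bar\mu_i$ or some $\mu_j$ equals $\mu$); these are harmless, since $\bar\mu_i=\mu$ forces $\tilde\mu_i=\mu$ and hence $\bar\mu_i=\tilde\mu_i$, so the relevant instance of the equivalence falls on the side $\bar\mu_i\le\mu$ and behaves exactly as in the strict case $\bar\mu_i<\mu$. Everything else is a routine computation with the explicit formula for $Z_{\mu_{\sigma}}$ displayed above.
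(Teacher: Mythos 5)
Your proof is correct and takes essentially the same route as the paper's: both rest on the observation that $Z$ and $Z'$ induce one and the same slope ordering on rank-zero classes (the paper's equivalence (\ref{Left})), after which the vanishing of $S$ for $l\ge 2$ is the standard see-saw/contradiction argument. The only difference is that the paper delegates this last combinatorial step to the proof of \cite[Theorem~4.5]{Joy1}, whereas you write it out explicitly (and correctly, including the borderline cases).
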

\begin{proof}
Note that we have 
\begin{align}\label{Left}
Z(v_i) \preceq Z(v_j) \quad 
\Leftrightarrow \quad 
\mu_{i} \le \mu_{j}
\quad \Leftrightarrow \quad Z'(v_i)\preceq Z'(v_j).
\end{align}
Then one can check that 
the same proof of~\cite[Theorem~4.5]{Joy1} is applied. 
\end{proof}
\begin{lem}\label{unless2}
Take $v_1, \cdots, v_l \in C(\aA_{1/2}^{p})$ 
as in (\ref{condition1}). 
For $Z$, $Z'$ as in (\ref{simplicity}), 
$S(\{v_1, \cdots, v_l\}, Z, Z')$ is non-zero only if 
\begin{align}\label{muor}
0<\mu_{1}\le \mu_{2}\le \cdots \le \mu_{e-1} \le -2k 
>\mu_{e+1}>\mu_{e+2}>\cdots >\mu_{l} \ge 0.
\end{align}
\end{lem}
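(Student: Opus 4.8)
The plan is to reduce the claim to an elementary combinatorial statement about finite sequences of rationals, after making the two weak stability functions $Z$, $Z'$ of (\ref{simplicity}) completely explicit on the classes that occur. From (\ref{compute}) one reads off that every rank-$(-1)$ class $u$ has the same phase under $Z_{\mu_{\sigma}}$, and under $Z_{\mu_{i\omega}}$, as the full class $v$: with respect to $Z'$ (the case $k=0$) one has $Z'(u)=-1$ constantly, while with respect to $Z$ one has $Z(u)\preceq Z(w)\Leftrightarrow \mu_{i\omega}(w)\ge -2k$ for any rank-$0$ class $w$, with no equality since $k\notin S(\beta)$ but $-\tfrac{1}{2}\mu_{i\omega}(w)\in S(\beta)$ (this is Lemma~\ref{lem:below}); and $Z'(u)\succ Z'(w)\Leftrightarrow \mu_{i\omega}(w)<0$. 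Two rank-$0$ classes are ordered by their slope $\mu_{i\omega}$ under either of $Z$, $Z'$. In particular, since the total class $v$ and whichever of $V_i\cneq v_1+\cdots+v_i$ and $V_i^c\cneq v_{i+1}+\cdots+v_l$ contains $v_e$ is of rank $-1$, one has $Z'(V_i^c)=-1$ for $i<e$ and $Z'(V_i)=-1$ for $i\ge e$, so the $Z'$-comparison of $V_i$ with $V_i^c$ collapses to the sign of $\sum_{j\le i}n_j$ when $i<e$ and to the sign of $\sum_{j>i}n_j$ when $i\ge e$.

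Next I would unwind the hypothesis $S(\{v_1,\dots,v_l\},Z,Z')\neq 0$: by \cite[Definition~4.2]{Joy4} it holds exactly when, for every $i=1,\dots,l-1$, the relation $Z(v_i)\preceq Z(v_{i+1})$ is equivalent to $Z'(V_i)\succ Z'(V_i^c)$. Calling $i$ an \emph{up step} when both sides hold and a \emph{down step} when both fail, the hypothesis says that every $i$ is an up step or a down step. Inserting the comparison rules above, an up step with $i<e-1$ says $\mu_i\le\mu_{i+1}$ and $\sum_{j\le i}n_j>0$; an up step at $i=e-1$ says $\mu_{e-1}\le -2k$ and $\sum_{j\le e-1}n_j>0$; an up step at $i=e$ says $\mu_{e+1}\ge -2k$ and $\sum_{j>e}n_j<0$; an up step with $i>e$ says $\mu_i\le\mu_{i+1}$ and $\sum_{j>i}n_j<0$; and the down steps are obtained by reversing these inequalities (the comparisons involving $v_e$ being automatically strict, those between two rank-$0$ pieces not necessarily so, and $\sum_{j\le 1}n_j=n_1$, $\sum_{j>l-1}n_j=n_l$). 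This strict/non-strict asymmetry is precisely what produces the weak chain before $v_e$ versus the strict chain after $v_e$, and the asymmetry between $0<\mu_1$ and $\mu_l\ge 0$, in (\ref{muor}).

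I would then derive the two halves of (\ref{muor}) by separate propagation arguments. For the pieces before $v_e$: if some $i\in\{1,\dots,e-1\}$ is a down step, let $i_0$ be the smallest one; then $1,\dots,i_0-1$ are up steps, which forces $\sum_{j\le i}n_j>0$ for $i<i_0$, hence $n_{i_0}<0$ and $\mu_{i_0}\le 0$; the down-step slope inequalities give $\mu_{i_0}>\mu_{i_0+1}>\cdots$, so $n_j<0$ and $\sum_{j\le i}n_j\le 0$ propagate forward through $i=i_0,\dots,e-1$, making all those steps down steps, and the down step at $i=e-1$ then forces $\mu_{e-1}>-2k>0$, contradicting $\mu_{e-1}\le\mu_{i_0}\le 0$. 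So $1,\dots,e-1$ are all up steps, giving $0<\mu_1\le\mu_2\le\cdots\le\mu_{e-1}\le -2k$ (positivity from $\sum_{j\le1}n_j=n_1>0$, monotonicity and the last inequality from the slope parts of the up steps). For the pieces after $v_e$ the argument is the mirror image: if some $i\in\{e,\dots,l-1\}$ is an up step, let $i_1$ be the largest one; then $\sum_{j>i_1}n_j<0$ while $\sum_{j>i_1+1}n_j\ge 0$, so $n_{i_1+1}<0$, and the up-step slope inequalities propagate $n_j<0$ and $\sum_{j>i}n_j<0$ backward down to $i=e$, whose up-step condition then forces $\mu_{e+1}\ge -2k>0$, contradicting $\mu_{e+1}<0$. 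So $e,\dots,l-1$ are all down steps, giving $-2k>\mu_{e+1}>\mu_{e+2}>\cdots>\mu_l\ge 0$ ($\mu_l\ge 0$ from $\sum_{j>l-1}n_j=n_l\ge 0$, the strict decreases and the first inequality from the slope parts). Concatenating the two chains is (\ref{muor}).

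The explicit comparison rules of the first step are routine (packaged in (\ref{compute}) and Lemma~\ref{lem:below}), and the case-check producing the up/down dictionary is mechanical. The part I expect to need the most care --- the main obstacle --- is carrying out the two propagation inductions cleanly through the boundary steps $i=e-1$ and $i=e$, where $v_e$ enters and the comparison rule changes, and disposing of the degenerate cases $e=1$, $e=l$, $l=1$ in which one or both of the chains in (\ref{muor}) is empty; in those cases one checks directly that the up/down dichotomy at the single boundary step already forces the required inequality. Morally this is the same mechanism as the proof of \cite[Theorem~4.5]{Joy1} that underlies Lemma~\ref{unless}, with the rank-$(-1)$ class $v_e$ now playing the role of a formal rank-$0$ class of slope $-2k$ for $Z$ and of slope $0$ for $Z'$.
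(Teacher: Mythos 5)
Your proposal is correct and follows essentially the same route as the paper's proof: both unwind $S(\{v_1,\dots,v_l\},Z,Z')\neq 0$ into the per-index dichotomy between (\ref{eith1}) and (\ref{eigh2}), translate the $Z$- and $Z'$-comparisons into slope inequalities against the thresholds $-2k$ and $0$ via Lemma~\ref{lem:below}, and derive monotonicity of the $Z$-order on each side of $e$ by a propagation-to-contradiction argument that ultimately uses $k<0$. The only difference is bookkeeping --- you propagate from the first ``down step'' before $e$ (and the last ``up step'' after $e$), whereas the paper localizes at turning points of type A/B and then rules out the globally decreasing alternative (\ref{bycont}) --- but the mechanism and all the key inequalities are identical.
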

Moreover in this case, we have 
$S(\{v_1, \cdots, v_l\}, Z, Z')=(-1)^{e-1}$. 
\begin{proof}
Note that for $v_i, v_j$ with $i, j \neq e$, 
the same implications (\ref{Left}) hold. 
Also for $i\neq e$, we have 
\begin{align*}
Z(v_i)\preceq Z(v_e) \quad &\Leftrightarrow \quad 
\mu_{i} \le -2k, \\
Z'(v_i)\preceq Z'(v_e) \quad &\Leftrightarrow \quad \mu_{i}\le 0.
\end{align*}
Suppose that $S(\{v_1, \cdots, v_l\}, Z, Z') \neq 0$. 
We say $1< i< l$ is of type A (resp.~B) if the 
following holds, 
$$Z(v_{i-1})\succ Z(v_i) \preceq Z(v_{i+1}), \quad 
(\mbox{resp. }Z(v_{i-1})\preceq Z(v_i) \succ Z(v_{i+1}).)$$
If $1< i\le e-1$ is of type A, we have 
\begin{align*}
&Z'(v_1+\cdots +v_{i-1}) \preceq Z'(v_i +\cdots +v_l), \\
&Z'(v_1+\cdots +v_i) \succ Z'(v_{i+1}+\cdots +v_l). 
\end{align*}
Hence we have 
\begin{align*}
&\mu_{i\omega}(v_1+\cdots +v_{i-1})\le 0, \\ 
&\mu_{i\omega}(v_1+\cdots +v_{i-1}+v_i)>0, 
\end{align*}
which implies $\mu_i >0$. Similarly if $i$ is of type 
$B$, we have $\mu_i <0$. 

Suppose that there is $1\le i\le e-1$ of type A or B, and 
take the smallest such $i$. We assume $i$ is 
of type $A$, hence $\mu_i>0$.  
We have 
$$Z(v_1) \succ \cdots \succ Z(v_{i-1}) \succ Z(v_i) 
\preceq Z(v_{i+1}) \cdots,$$
thus $\mu_1>\cdots >\mu_i>0$ holds.
On the other hand, we have 
$Z'(v_1) \preceq Z'(v_2+\cdots +v_l)$, thus 
$\mu_1 \le 0$. This is a contradiction, so there is
no $1\le i\le e-1$ of type A. 
Similarly there is no $1\le i\le e-1$ of type $B$.  

By the above argument, one of (\ref{bycont}) or (\ref{bycont2}) holds. 
\begin{align}\label{bycont}
Z(v_1)\succ Z(v_2) \succ \cdots \succ Z(v_{e-1}) \succ Z(v_e), \\
\label{bycont2}
Z(v_1) \preceq Z(v_2) \preceq \cdots \preceq Z(v_{e-1}) \preceq Z(v_e).
\end{align}
Assume by a contradiction that (\ref{bycont}) holds.  
Then $Z'(v_1)\preceq Z'(v_2+\cdots +v_l)$, thus (\ref{bycont}) implies
\begin{align}\label{contmu}
0 \ge \mu_{1} >\mu_2> \cdots > \mu_{e-1}>-2k.
\end{align}
The inequality (\ref{contmu}) does not occur since we took $k<0$. 
Therefore we must have (\ref{bycont2}).  

A similar argument for $v_{e+1}, \cdots, v_l$ shows 
\begin{align}\label{iff1}
Z(v_1) \preceq \cdots \preceq Z(v_e) \succ Z(v_{e+1}) 
\succ \cdots \succ Z(v_l).
\end{align}
Obviously (\ref{iff1}) together with $S(\{v_1, \cdots, v_l\}, Z, Z')\neq 0$
imply (\ref{muor}).
\end{proof}

\begin{prop}\label{Coe}
(i)
Take $v_1, \cdots, v_l \in C(\aA_{1/2}^{p})$ 
as in (\ref{condition1}), and let $Z$, $Z'$ be as in (\ref{simplicity}). 
Then
$U(\{v_1, \cdots, v_l\}, Z, Z')$ is non-zero 
only if 
\begin{align}\label{onlyif}
0\le \mu_{1} \le \mu_{2} \le \cdots \le \mu_{e-1}\le -2k \ge \mu_{e+1} \ge 
\cdots \ge \mu_l \ge 0.
\end{align}
(ii) In the same situation of (i), suppose that 
$U(\{v_1, \cdots, v_l\}, Z, Z')\prod_{i\neq e}n_i$
is non-zero. Then we have \emph{
\begin{align}\notag
U(\{v_1, \cdots, v_l\}, Z, Z')=\sum_{1\le m\le l}
&\sum_{\begin{subarray}{c}
\text{surjective }\psi \colon \{1, \cdots, l\}
 \to \{1, \cdots, m\}, \\
 i\le j \text{ implies }\psi(i)\le \psi(j), \text{ and satisfies }
 (\ref{Usat}) \end{subarray} } \\
 \label{U3} &\qquad \qquad (-1)^{\psi(e)-1}\prod_{b=1}^{m}
\frac{1}{\lvert \psi^{-1}(b) \rvert !}.
\end{align}}
Here $\psi$ satisfies the following. \emph{
\begin{align}\label{Usat}
&\text{For } i, j <e \text{ with }\psi(i)=\psi(j), \text{ we have }
\mu_i=\mu_j, \text{ if } \psi(i)=\psi(e) \text{ then }
\mu_i=-2k, \\ 
\notag
& \text{and for } e<i, j, 
\text{ we have }\psi(i)=\psi(j) \text{ if and only if }
\mu_i=\mu_j.
\end{align}}
(iii) The formula (\ref{form2}) is a finite sum.  
\end{prop}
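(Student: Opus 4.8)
The plan is to compute the combinatorial coefficient $U(\{v_1,\dots,v_l\},Z,Z')$ directly from its definition (Definition~\ref{defi:U}), feeding in the explicit values of the elementary coefficients $S$ supplied by Lemmas~\ref{unless} and \ref{unless2}, the formulas (\ref{compute}) for $Z_{\sigma_m}^{\dag}$ on classes with $\ch_0\in\{0,-1\}$, and the slope comparison of Lemma~\ref{lem:below}. Throughout, $Z,Z'$ are as in (\ref{simplicity}) with $k<0$ fixed, and the tuples $(v_1,\dots,v_l)$ are, by hypothesis, of the shape (\ref{condition1}): there is a unique index $e$ with $\ch(v_e)=(-1,0,\beta_e,n_e)$, while $\ch(v_i)=(0,0,\beta_i,n_i)$ with $\beta_i\in C(X)$, $n_i\in\mathbb Z$ for $i\neq e$. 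I write $\mu_i=n_i/(\beta_i\omega)$ and use the equivalences (\ref{Left}), which say that $Z$ and $Z'$ order purely one-dimensional classes by their slope. I would prove (i) first, deduce (iii) as an immediate corollary, and finally prove (ii).

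For (i): if $U(\{v_1,\dots,v_l\},Z,Z')\neq0$ then at least one term of the defining sum (\ref{U}) is non-zero, so there are order-preserving surjections $\psi\colon\{1,\dots,l\}\twoheadrightarrow\{1,\dots,m\}$ and $\xi\colon\{1,\dots,m\}\twoheadrightarrow\{1,\dots,m'\}$ satisfying $(\diamondsuit)$ and with every factor $S(\{w_i\}_{i\in\xi^{-1}(a)},Z,Z')\neq0$, where $w_b=\sum_{j\in\psi^{-1}(b)}v_j$; exactly one group $w_{b_0}$, with $b_0=\psi(e)$, has $\ch_0=-1$, and the rest are non-zero one-dimensional classes (every $v_i$ with $i\neq e$ has $\beta_i\neq0$). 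By Lemma~\ref{unless}, every $\xi$-fibre not containing $b_0$ is forced to be a singleton; by Lemma~\ref{unless2}, the group slopes inside the $\xi$-fibre containing $b_0$ obey a chain of type (\ref{muor}). Condition $(\diamondsuit)$ forces $Z(v_i)=Z(v_j)$, hence $\mu_i=\mu_j$, whenever $\psi(i)=\psi(j)$ with $i,j\neq e$, and forces $\mu_i=-2k$ for every $i\neq e$ with $\psi(i)=b_0$ by Lemma~\ref{lem:below} (this is where $k<0$ enters); a short computation with (\ref{compute}) then shows that the slope of a non-special group $w_b$ equals the common slope of its members. Pulling the group-chain of type (\ref{muor}) back along the order-preserving $\psi$ therefore turns its strict inequalities into the non-strict ones of (\ref{onlyif}) — two distinct members of one group are merely allowed to have equal slope — and the end conditions $\mu_1\ge0$, $\mu_l\ge0$ descend the same way. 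This gives (i). Part (iii) then follows at once: for a non-zero term of (\ref{form2}) the classes $\beta_i$ with $i\neq e$ are non-zero elements of the finite set $\overline{C}_{\le\beta}(X)$ (\cite[Lemma~3.9, Lemma~3.10]{Tolim}), so $l$ and the $\beta_i$ range over finite sets, while $0\le\mu_i\le-2k$ together with $n_i\in\mathbb Z$ forces $n_i=\mu_i\beta_i\omega$ into a finite set for $i\neq e$; then $\beta_e=\beta-\sum_{i\neq e}\beta_i$ and $n_e=n-\sum_{i\neq e}n_i$ are determined, so only finitely many ordered tuples contribute.

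For (ii): assume in addition $U(\{v_1,\dots,v_l\},Z,Z')\prod_{i\neq e}n_i\neq0$, so with (\ref{onlyif}) one gets $n_i>0$ for all $i\neq e$. Fix a witnessing pair $(\psi,\xi)$ as above. The key point is that $m'=1$: if $m'\ge2$ then some ``super-group'' $G_j=\sum_{k\in\psi^{-1}\xi^{-1}(j)}v_k$ does not contain $v_e$ and is a sum of one-dimensional classes all with positive $\ch_3$, so by (\ref{compute}) with $B=0$ its $Z'$-value lies strictly above $-1$, whereas the super-group containing $v_e$ has $\ch_0=-1$ and $Z'$-value identically $-1$ by (\ref{compute}); this contradicts the second clause of $(\diamondsuit)$. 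Hence $\xi$ is the trivial map, the factor $(-1)^{m'}/m'$ equals $-1$, the product over $b$ is untouched, and the single surviving $S$-factor $S(\{w_1,\dots,w_m\},Z,Z')$ is evaluated by Lemma~\ref{unless2}: it vanishes unless the $w$-chain of type (\ref{muor}) holds, a condition which, expressed back in terms of the $v_i$ and $\psi$, is precisely (\ref{Usat}) (using Lemma~\ref{lem:below} at both $\sigma$ and $i\omega$), and in that case it equals $(-1)^{\psi(e)-1}$. Collecting the signs yields the stated formula (\ref{U3}).

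I expect the main obstacle to be the sign- and constraint-bookkeeping in the last step of (ii): showing that non-vanishing of $S(\{w_1,\dots,w_m\},Z,Z')$ is equivalent to (\ref{Usat}) requires a case analysis of which of (\ref{eith1}) or (\ref{eigh2}) holds at each index $i$ when $S$ is evaluated, and this is exactly where the asymmetry between the $\le$ on the left and the $\ge$ on the right of (\ref{onlyif}) originates; pinning down the exponent of $(-1)$ amounts to a careful count of how many such indices fall into case (\ref{eith1}). By contrast, the reductions ``non-special $\xi$-fibres are singletons'' and ``$m'=1$'', and the discreteness argument for (iii), are routine.
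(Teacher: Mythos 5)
Your proposal follows essentially the same route as the paper's proof: unfold Definition~\ref{defi:U}, use Lemma~\ref{unless} to force the $\xi$-fibres away from $\xi\psi(e)$ to be singletons, use Lemma~\ref{unless2} to control the fibre containing $\psi(e)$, translate the constraint $(\diamondsuit)$ into slope equalities via Lemma~\ref{lem:below} and (\ref{compute}), and count numerical classes for (iii). Two points need tightening. First, in (i) your chain argument only produces (\ref{onlyif}) for the indices $i\in\psi^{-1}\xi^{-1}(\xi\psi(e))$; for the remaining indices you must invoke the \emph{second} clause of $(\diamondsuit)$, which forces every super-group not containing $v_e$ to have the same $Z'$-value as the rank $-1$ super-group, namely $-1$, hence $\mu_i=0$ for all of its members. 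This is exactly the computation you carry out in (ii) to get $m'=1$, but it is needed already in (i) — without it nothing constrains the slopes outside that interval and the monotone chain (\ref{onlyif}) is not established. (The paper does this in (i) and then obtains $m'=1$ in (ii) as an immediate corollary, since $\prod_{i\neq e}n_i\neq 0$ rules out $\mu_i=0$.)

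Second, your sign bookkeeping in (ii) does not actually close: taking the factor $(-1)^{m'}/m'=-1$ as printed in Definition~\ref{defi:U} together with $S(\{w_1,\dots,w_m\},Z,Z')=(-1)^{\psi(e)-1}$ from Lemma~\ref{unless2} gives $(-1)^{\psi(e)}$, not the $(-1)^{\psi(e)-1}$ of (\ref{U3}). The resolution is that the coefficient in Definition~\ref{defi:U} should read $(-1)^{m'-1}/m'$, as in Joyce's original definition; with the printed sign one would get $U(\{v\},Z,Z')=-1$, which is incompatible with (\ref{TransU}) in the trivial case $Z'=Z$. With that correction your computation yields (\ref{U3}) exactly, and it is also consistent with the sign $(-1)^{\psi(e)-e}$ appearing later in (\ref{Trans5.5}). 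Neither point changes the strategy, which is the paper's own.
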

\begin{proof}
(i)
Suppose that $U(\{v_1, \cdots, v_l\}, Z, Z')\neq 0$ 
and take 
\begin{align}\label{phisi}
\psi \colon \{1, \cdots, l\} \to \{1, \cdots, m\}, \quad 
\xi\colon \{1, \cdots, m\} \to \{1, \cdots, m'\}, 
\end{align}
as in (\ref{U}). 
Let us set $c=\xi \psi(e) \in \{1, \cdots, m'\}$
and take $a \in \{1, \cdots, m'\}$ with $a\neq c$. 
By Lemma~\ref{unless}, 
the set $\xi^{-1}(a)$ consists of one element, 
say $b\in \{1, \cdots, m\}$. 
Then by the definition of $U(\{v_1, \cdots, v_l\}, Z, Z')$, 
we have 
$\mu_i=\mu_j$ for $i, j \in \psi^{-1}(b)$ and 
\begin{align}\label{latter}
Z'(\sum_{i\in \psi^{-1}(b)}v_i) = Z'(\sum_{i\in \psi^{-1}\xi^{-1}(c)}v_i).
\end{align}
The condition (\ref{latter})
 implies $\mu_{i\omega}(\sum_{i\in \psi^{-1}(b)}v_i)=0$, 
hence $\mu_i=0$ for any $i\in \psi^{-1}(b)$, i.e. $\mu_i=0$ for 
any $i\notin \psi^{-1}\xi^{-1}(c)$. 
By Lemma~\ref{unless2}, we must have (\ref{onlyif}). 

(ii) Suppose that
\begin{align}\label{supU}
U(\{v_1, \cdots, v_l\}, Z, Z')\prod_{i\neq e}n_i \neq 0, 
\end{align}
and take 
$\psi \colon \{1, \cdots, l\} \to \{1, \cdots, m\}$, 
$\xi\colon \{1, \cdots, m\} \to \{1, \cdots, m'\}$
as in (i).  
Then the proof of (i) shows that (\ref{supU}) 
is non-zero only if $m'=1$.
Then (\ref{U3}) follows from the definition of 
$U(\{v_1, \cdots, v_l\}, Z, Z')$
and Lemma~\ref{unless2}. 

(iii) Since $v_1, \cdots, v_l$ satisfy (\ref{condition1}), 
the number $l$ is bounded, and there is only finite number of 
possibilities for $\beta_i=\ch_2(v_i)$. Hence we may fix $l$ and 
$\beta_1, \cdots, \beta_l$. Then the values $n_i=\ch_3(v_i)$ have
 only finite number of possibilities by (\ref{onlyif}).  
\end{proof}
Now we have the wall-crossing formula of 
the invariants $L_{n, \beta}^{eu}(\sigma)$. 
\begin{prop}
For $\sigma=k\omega +i\omega$ with $k<0$, $\beta \in \overline{C}(X)$
and $n\in \mathbb{Z}$, we have the following formula, 
\emph{\begin{align}\notag
L_{n, \beta}^{eu}=&
\sum_{\begin{subarray}{c}l\ge 1, \ 1\le e\le l, \ 
\beta_i \in C(X) \text{ for }i\neq e, 
\ \beta_e \in \overline{C}(X), \ n_i \in \mathbb{Z},  \\
 \beta_1 +\cdots +\beta_l=\beta, \
n_1 +\cdots +n_l=n, \ \mu_i=n_i/\beta_i \omega \text{ satisfy }\\ 
 0<\mu_1 \le \mu_2 \le \cdots \le  \mu_{e-1} \le -2k \ge \mu_{e+1}
 \ge \cdots \ge \mu_l >0
\end{subarray}}
\sum_{\begin{subarray}{c}1\le m\le l, \ 
\text{surjective }\psi \colon \{1, \cdots, l\}
 \to \{1, \cdots, m\}, \\
 i\le j \text{ implies }\psi(i)\le \psi(j), \text{ and satisfies }
 (\ref{Usat})\end{subarray}} \\
&\qquad \label{Trans5.5}
\left( -\frac{1}{2} \right)^{l-1}
\prod_{b=1}^{m}\frac{(-1)^{\psi(e)-e}}{\lvert \psi^{-1}(b) \rvert !}
\prod_{i\neq e}n_i 
N_{n_i, \beta_i}^{eu}L_{n_e, \beta_e}^{eu}(\sigma).
\end{align}}
\end{prop}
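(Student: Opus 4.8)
The plan is to start from the definition $L_{n,\beta}^{eu}=L_{n,\beta}^{eu}(i\omega)=\Theta_v\Xi\epsilon^{v}(Z')$ with $\ch(v)=(-1,0,\beta,n)$, where as in (\ref{simplicity}) I write $Z=Z_{\mu_{\sigma}}$ for $\sigma=k\omega+i\omega$, $k<0$, and $Z'=Z_{\mu_{i\omega}}$, and to feed in the wall-crossing expansion (\ref{form2}). By Lemma~\ref{lem:eff} every term of (\ref{form2}) has the shape (\ref{condition1}): there is a unique index $e$ with $\ch(v_e)=(-1,0,\beta_e,n_e)$, while $\ch(v_i)=(0,0,\beta_i,n_i)$ with $\beta_i\in C(X)$ for $i\neq e$; and by Proposition~\ref{Coe}(i),(iii) the sum is finite and $U(\{v_1,\dots,v_l\},Z,Z')=0$ unless the slopes $\mu_i=n_i/(\beta_i\omega)$ satisfy (\ref{onlyif}).

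The first real step is to push $\Xi$ through the $\ast$-product. For $i\neq e$ the class $v_i$ is realized only by one-dimensional sheaves, which have trivial determinant, so $\epsilon^{v_i}(Z)$ is supported on $\mathfrak{Obj}_0(\aA^p)$. Since $\det$ is multiplicative along exact sequences of $\aA^p$, it follows directly from the definitions of $\ast$ and of the fibre product (\ref{cdot}) that $\Xi(g\ast h)=g\ast\Xi(h)$ and $\Xi(h\ast g)=\Xi(h)\ast g$ whenever $g\in\gG(\aA^p)$ is supported on $\mathfrak{Obj}_0(\aA^p)$; hence $\Xi([g,h])=[g,\Xi(h)]$, i.e.\ $\Xi$ is a module homomorphism over the ``sheaf part'' of the Ringel--Hall Lie algebra. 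Rewriting (\ref{form2}) as a $\mathbb{Q}$-combination of iterated commutators of the $\epsilon^{v_i}(Z)$ (the remark following (\ref{TransU})) and applying this property repeatedly, $\Xi$ simply inserts itself at the unique non-sheaf slot $e$. Applying the Lie algebra homomorphism $\Theta$ of Theorem~\ref{Lie}, and using $\Theta_{v_i}\epsilon^{v_i}(Z)=J^{v_i}(Z)=N_{n_i,\beta_i}^{eu}$ for $i\neq e$ (Definition~\ref{def:inv} and Remark~\ref{rmk:invLP}) together with $\Theta_{v_e}\Xi\epsilon^{v_e}(Z)=L_{n_e,\beta_e}^{eu}(\sigma)$ (Definition~\ref{def:inv}), the graph reorganization of Theorem~\ref{prop:trans} (in the quasi-abelian form of Theorem~\ref{thm:gen}) goes through verbatim, with $L_{n_e,\beta_e}^{eu}(\sigma)$ occupying the slot where $J^{v_e}(Z)$ would sit: $L_{n,\beta}^{eu}$ is a sum over $l$, over the $v_i$, and over connected simply connected oriented graphs $\Gamma$ on $\{1,\dots,l\}$ of $\tfrac{1}{2^{l-1}}U(\{v_i\},Z,Z')\prod_{\stackrel{i}{\bullet}\to\stackrel{j}{\bullet}\text{ in }\Gamma}\chi(v_i,v_j)\prod_{i\neq e}N_{n_i,\beta_i}^{eu}\cdot L_{n_e,\beta_e}^{eu}(\sigma)$.

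It then remains to evaluate the Euler pairings on the Calabi--Yau $3$-fold $X$ and do the combinatorics. One has $\chi(v_i,v_j)=0$ when $v_i,v_j$ are both one-dimensional-sheaf classes, since $\ch(v_i)^{\vee}\ch(v_j)$ has no component of degree $\leq 6$; and, writing $v_e$ numerically as $[F]-[\oO_X]$ for a one-dimensional class $[F]$, $\chi(v_e,v_j)=\chi([F],v_j)-\chi(\oO_X,v_j)=-n_j$ (using $\chi(\oO_X,v_j)=n_j$), hence $\chi(v_j,v_e)=n_j$ by antisymmetry. Therefore $\prod_{\text{edges}}\chi(v_i,v_j)\neq0$ forces every edge of $\Gamma$ to be incident to the vertex $e$; since $\Gamma$ is connected and spans $\{1,\dots,l\}$, it must be the unique star centred at $e$. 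With the orientation rule $\stackrel{i}{\bullet}\to\stackrel{j}{\bullet}$ only if $i<j$, the edge $\{e,j\}$ contributes $\chi(v_j,v_e)=n_j$ if $j<e$ and $\chi(v_e,v_j)=-n_j$ if $j>e$, so $\prod_{\text{edges}}\chi(v_i,v_j)=(-1)^{l-e}\prod_{i\neq e}n_i$. Discarding (harmlessly) the terms with $\prod_{i\neq e}n_i=0$ leaves exactly the terms with all $\mu_i>0$ for $i\neq e$, which together with (\ref{onlyif}) is precisely the chain condition appearing in (\ref{Trans5.5}); on these surviving terms $U(\{v_i\},Z,Z')\prod_{i\neq e}n_i\neq0$, so Proposition~\ref{Coe}(ii) lets me replace $U(\{v_i\},Z,Z')$ by the explicit sum (\ref{U3}) over surjections $\psi$ satisfying (\ref{Usat}). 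Collecting constants, $\tfrac{1}{2^{l-1}}(-1)^{l-e}(-1)^{\psi(e)-1}=\left(-\tfrac12\right)^{l-1}(-1)^{\psi(e)-e}$, which is the coefficient displayed in (\ref{Trans5.5}), and the formula follows. The step I expect to be the main obstacle is precisely the handling of $\Xi$: establishing cleanly that it commutes with $\ast$-multiplication by the trivial-determinant factors and hence with the commutator rewriting of (\ref{form2}), so that Joyce's transformation formula can be applied with $\Theta_{v_e}\Xi\epsilon^{v_e}(Z)$ inserted at the correct slot; the remaining sign bookkeeping is routine.
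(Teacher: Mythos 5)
Your proposal follows essentially the same route as the paper: apply $\Xi$ to (\ref{form2}), commute it past the trivial-determinant sheaf factors $\epsilon^{v_i}(Z)$ ($i\neq e$), apply $\Theta$ via the graph-sum formula of Theorem~\ref{prop:trans}, use $\chi(v_i,v_j)=0$ for sheaf classes and $\chi(v_j,v_e)=n_j$ to reduce to the star graph centred at $e$, and finish with Proposition~\ref{Coe}(ii); the sign bookkeeping $\tfrac{1}{2^{l-1}}(-1)^{l-e}(-1)^{\psi(e)-1}=(-\tfrac12)^{l-1}(-1)^{\psi(e)-e}$ matches. Your explicit verification that $\Xi$ intertwines with the commutator rewriting is a slightly more careful version of the step the paper only asserts, but it is the same argument.
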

\begin{proof}
Let $v\in C(\aA_{1/2}^p)$ be as in (\ref{num}), 
and $Z$, $Z'$ be as in (\ref{simplicity}). 
Applying $\Xi$ given in (\ref{Xi}) to (\ref{form2}), 
we obtain 
\begin{align}\notag
\Xi\epsilon^{v}(Z')=
\sum_{l\ge 1, \ 1\le e \le l}
\sum_{\begin{subarray}{c}
 v_i \in C(\aA_{1/2}^p), \
v_1+\cdots +v_l=v, \\
\ch_0(v_i)=0 \text{ for }i\neq e, \ \ch_0(v_e)=-1
\end{subarray}}&U(\{v_1, \cdots, v_l\}, Z, Z') \\
&\label{formula0.5}
\epsilon^{v_1}(Z)\ast \cdots \ast  \Xi\epsilon^{v_e}(Z)\ast 
 \cdots \ast \epsilon^{v_l}(Z).
\end{align}
Note that for $i\neq e$, the element
 $\epsilon^{v_i}(Z)\in \hH(\aA^p)$ is supported on 
$\mathfrak{Obj}_{0}(\aA^p)\subset \mathfrak{Obj}(\aA^p)$, 
hence $\Xi(\epsilon^{v_i}(Z) \ast \epsilon)=
\epsilon^{v_i}(Z)\ast \Xi(\epsilon)$
follows for any $\epsilon \in \hH(\aA^p)$. Thus 
(\ref{formula0.5}) follows from (\ref{form2}). 
Also we note that (\ref{formula0.5}) is a finite sum by 
Proposition~\ref{Coe} (iii). Hence applying $\Theta$ 
given in (\ref{Psi}) and using the same argument 
of Theorem~\ref{prop:trans}, 
we obtain 
\begin{align}
\notag
L_{n, \beta}^{eu}=
&\sum_{\begin{subarray}{c}l\ge 1, \ v_i \in C(\aA^p_{1/2}), \\
v_1+\cdots +v_l=v\end{subarray}}
\sum _{\begin{subarray}{c}
\Gamma \text{ \rm{is a connected, simply connected}} \\
\text{\rm{graph with vertex} }\{1, \cdots, l\}, \
\stackrel{i}{\bullet}\to \stackrel{j}{\bullet}\text{ \rm{implies} }
i< j
\end{subarray}} 
 \frac{1}{2^{l-1}}U(\{v_1, \cdots, v_l\}, Z, Z')  \\
&\qquad \label{Trans4}
\prod_{\stackrel{i}{\bullet} \to \stackrel{j}{\bullet}\text{ \rm{in} }\Gamma}
\chi(v_i, v_j)
\prod_{i\neq e}N_{n_i, \beta_i}^{eu}L_{n_e, \beta_e}^{eu}(\sigma),
\end{align}
where $v_1, \cdots, v_l$ satisfy (\ref{condition1})
and we have used the notation of (\ref{condition1}). 
By Riemann-Roch theorem, we have
 $\chi(v_i, v_j)=0$ for $i, j \neq e$, 
$\chi(v_i, v_e)=n_i$ and $\chi(v_e, v_i)=-n_i$. Hence 
a term in 
the sum (\ref{Trans4}) is non-zero only if the 
graph $\Gamma$ is of the following form, 
$$\xymatrix{
1 \bullet \ar[dr] &   & \bullet e+1 \\
\vdots \ar[r] & \bullet e \ar[ur] \ar[dr] \ar[r] &\vdots \\
e-1 \bullet \ar[ur] & & \bullet l.
}$$
Hence applying Proposition~\ref{Coe} (ii), we obtain the 
formula (\ref{Trans5.5}). 
\end{proof}

\subsection{Relationship between $L_{n, \beta}^{eu}$
and $P_{n, \beta}^{eu}$}
We next establish a relationship between 
$L_{n, \beta}^{eu}$ and $P_{n, \beta}^{eu}$. 
Let us take $N(\beta) \in \mathbb{Z}$ as in (\ref{Lau}). 
We choose $k<0$ so that 
\begin{align}\label{takek}
k<-\frac{1}{2}(n-N(\beta')), \quad 
k<-\frac{1}{2}\mu_{n, \beta'} \quad \text{ for any }
\beta' \in C_{\le \beta}(X).
\end{align} 
In this particular choice of $k$, we have the 
following formula. 
\begin{prop}\label{lem:Trans5}
If $k$ satisfies (\ref{takek}), then (\ref{Trans5.5}) implies 
the following. \emph{
\begin{align}\notag
L_{n, \beta}^{eu}&=
\sum_{\begin{subarray}{c}l\ge 1, \ 1\le e\le l, \ 0\le t \le e-1, \ 
0\le s \le l-e, \\
0=m_0<m_1< \cdots <m_t=e-1, \\
e=m_0'<m_1'<\cdots <m_s'=l \end{subarray}}
\sum_{\begin{subarray}{c} \beta_i \in C(X) \text{ for }i\neq e, \ 
\beta_e \in \overline{C}(X), \ n_i \in \mathbb{Z}, \\
\beta_1+\cdots +\beta_l=\beta, \
n_1+\cdots +n_l=n, \ \mu_i=n_i/\omega \beta_i \\
\text{satisfy }0<\mu_1=\cdots =\mu_{m_1}<\mu_{m_1+1}= \cdots, \\
0<\mu_{l}=\cdots 
=\mu_{m'_{s-1}+1}<\mu_{m'_{s-1}}=\cdots.
\end{subarray}} \\
\label{Trans6}
& \left( -\frac{1}{2}\right)^{l-1}
\prod_{i=1}^{t}\frac{1}{(m_{i}-m_{i-1})!}
\prod_{i=1}^{s}\frac{1}{(m_i'-m_{i-1}')!}\prod_{i\neq e}n_i
N_{n_i, \beta_i}^{eu}P_{n_e, \beta_e}^{eu}.
\end{align}}
\end{prop}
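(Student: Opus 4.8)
The plan is to take the already-established identity (\ref{Trans5.5}) as the starting point and to perform two reductions: (A) replace $L_{n_e,\beta_e}^{eu}(\sigma)$ by $P_{n_e,\beta_e}^{eu}$, and (B) evaluate the inner sum over the maps $\psi$ explicitly. I would first check that in every nonzero term of (\ref{Trans5.5}) the class $v_e$ lies in the ``$P$''-regime of Theorem~\ref{prop:stack}. Indeed, the chain of inequalities in (\ref{Trans5.5}) forces $\mu_i>0$, hence $n_i=\mu_i\,\omega\beta_i>0$, for all $i\neq e$; therefore $n_e=n-\sum_{i\neq e}n_i\le n$, and $\beta-\beta_e=\sum_{i\neq e}\beta_i\in\overline{C}(X)$, i.e.\ $\beta_e\in\overline{C}_{\le\beta}(X)$. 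Since $\mu_{n,\beta_e}$ is non-decreasing in $n$ (by the definition of $\mu_{n,\beta}$), we get $\mu_{n_e,\beta_e}\le\mu_{n,\beta_e}$, so the second inequality in (\ref{takek}) yields $k<-\tfrac12\mu_{n_e,\beta_e}$; Theorem~\ref{prop:stack} together with Remark~\ref{rmk:eu} (equivalently Remark~\ref{rmk:invLP}) then gives $L_{n_e,\beta_e}^{eu}(\sigma)=P_{n_e,\beta_e}^{eu}$. The case $\beta_e=0$ forces $n_e=0$, $v_e\cong\oO_X[1]$, and $L_{0,0}^{eu}(\sigma)=P_{0,0}^{eu}=1$, so it is immediate.

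Next I would argue that, for $k$ as in (\ref{takek}), the clauses ``$\le -2k$'' in the constraint of (\ref{Trans5.5}) are redundant on the set of terms that actually contribute. After reduction (A) a term vanishes unless $P_{n_e}(X,\beta_e)\neq\emptyset$, hence unless $n_e\ge N(\beta_e)$ (cf.\ (\ref{Lau})); combined with $n_i\le n-n_e$ and the first inequality in (\ref{takek}) this bounds $\mu_i\le n_i<-2k$ for every $i\neq e$ (in the degenerate case $\beta_e=0$ one uses in addition that $\mu_i\in S(\beta)$, so that a generic choice of $k$ with $-2k\notin S(\beta)$ — still compatible with (\ref{takek}) since $S(\beta)$ is discrete — makes the inequality strict). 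Consequently the constraint in (\ref{Trans5.5}) reduces to $0<\mu_1\le\cdots\le\mu_{e-1}$ together with $\mu_{e+1}\ge\cdots\ge\mu_l>0$, and organizing this sum according to the maximal constant runs of $(\mu_1,\dots,\mu_{e-1})$ and of $(\mu_{e+1},\dots,\mu_l)$ produces exactly the outer summation indices $0=m_0<m_1<\cdots<m_t=e-1$ and $e=m_0'<m_1'<\cdots<m_s'=l$ occurring in (\ref{Trans6}).

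The heart of the argument is reduction (B): for a fixed tuple with the above maximal-run structure, the inner sum of (\ref{Trans5.5}) runs, by Proposition~\ref{Coe}(ii), over monotone surjective $\psi\colon\{1,\dots,l\}\to\{1,\dots,m\}$ satisfying (\ref{Usat}). Because $\mu_i\neq-2k$ for $i\neq e$, the clause ``$\psi(i)=\psi(e)\Rightarrow\mu_i=-2k$'' forces $\psi^{-1}(\psi(e))=\{e\}$; the clause for indices $>e$ forces $\psi|_{\{e+1,\dots,l\}}$ to be exactly the partition into the $s$ maximal runs $(m_{b-1}',m_b']$; and the clause for indices $<e$ makes $\psi|_{\{1,\dots,e-1\}}$ an arbitrary refinement of the partition into the $t$ maximal runs $(m_{a-1},m_a]$. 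Writing such a refinement of the $a$-th run (of size $q_a:=m_a-m_{a-1}$) as consecutive pieces of sizes $q_{a,1},\dots,q_{a,r_a}$ and setting $R=\sum_a r_a$, one has $m=R+1+s$, $\psi(e)=R+1$, and
\[
\prod_{b=1}^m\frac{1}{\lvert\psi^{-1}(b)\rvert!}=\Bigl(\prod_{a=1}^t\prod_{j=1}^{r_a}\frac{1}{q_{a,j}!}\Bigr)\prod_{b=1}^s\frac{1}{(m_b'-m_{b-1}')!},\qquad(-1)^{\psi(e)-e}=(-1)^{1-e}\prod_{a=1}^t(-1)^{r_a}.
\]
Substituting into the formula (\ref{U3}) of Proposition~\ref{Coe}(ii), the sum over refinements factors over the runs $a$, and for each run I invoke the elementary identity
\[
\sum_{\text{compositions }(q_1,\dots,q_r)\text{ of }q}(-1)^r\prod_{j=1}^r\frac{1}{q_j!}=\frac{(-1)^q}{q!},
\]
which follows from $\sum_{r\ge1}(-1)^r(e^x-1)^r=e^{-x}-1$. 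Since $\sum_{a=1}^t q_a=e-1$, the total sign $(-1)^{1-e}\prod_a(-1)^{q_a}$ equals $+1$, and the inner sum collapses to $\prod_{a=1}^t\frac{1}{(m_a-m_{a-1})!}\prod_{b=1}^s\frac{1}{(m_b'-m_{b-1}')!}$. Feeding this back into (\ref{Trans5.5}) — and recalling the Riemann--Roch evaluations $\chi(v_i,v_e)=n_i$, $\chi(v_e,v_i)=-n_i$, $\chi(v_i,v_j)=0$ for $i,j\neq e$ that were already used to derive (\ref{Trans5.5}) — produces precisely the coefficient of (\ref{Trans6}).

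I expect the main obstacle to be reduction (B): pinning down exactly which refinements of the first $e-1$ maximal runs (\ref{Usat}) permits, verifying that the $\psi$-fiber over $e$ is a singleton, and matching signs so that the reciprocal-factorial products assemble into the clean coefficient $(-1/2)^{l-1}\prod_a\frac{1}{(m_a-m_{a-1})!}\prod_b\frac{1}{(m_b'-m_{b-1}')!}$ via the generating-function identity above. The remaining ingredients — the boundedness estimates that make the walls $\mu_i\le-2k$ automatic, and the equality $L_{n_e,\beta_e}^{eu}(\sigma)=P_{n_e,\beta_e}^{eu}$ — are routine once Theorem~\ref{prop:stack}, Remark~\ref{rmk:eu} and (\ref{Lau}) are in hand; one should, however, keep the degenerate cases $l=1$ (vacuous chain, $v_e=v$) and $\beta_e=0$ ($n_e=0$) explicitly in view, as both behave trivially.
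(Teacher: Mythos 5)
Your proposal is correct and follows essentially the same route as the paper: first use $n_i>0$ for $i\neq e$ and the choice (\ref{takek}) to replace $L_{n_e,\beta_e}^{eu}(\sigma)$ by $P_{n_e,\beta_e}^{eu}$ and to render the walls $\mu_i\le -2k$ automatic, then regroup by maximal constant runs and collapse the sum over monotone surjections $\psi$ via the elementary factorial identity (your composition identity is exactly the paper's Lemma~\ref{element}, proved there by citing Joyce and here by the generating function $e^{-x}-1$). Your extra care with the singleton fiber over $e$, the sign bookkeeping $(-1)^{\psi(e)-e}$, and the degenerate cases $\beta_e=0$, $l=1$ only makes explicit what the paper leaves implicit.
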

\begin{proof}
First note that all the $n_i$ in the formula (\ref{Trans5.5})
are positive except $i=e$. Thus
we have $n_e \le n$, hence 
$$k<-\mu_{n, \beta_e}/2 \le -\mu_{n_e, \beta_e}/2.$$
Therefore in the formula (\ref{Trans5.5}), we have 
$$L_{n_e, \beta_e}^{eu}(\sigma)=P_{n_e, \beta_e}^{eu}, $$
by Remark~\ref{rmk:invLP}. 
Thus we may assume $n_e \ge N(\beta_e)$ in
the formula (\ref{Trans5.5}). 
Then 
the condition $\mu_i\le -2k$ 
in (\ref{Trans5.5}) is automatically satisfied, since 
$$0<\mu_i \le n_i \le n-n_e \le n-N(\beta_e) <-2k, $$
by our choice of $k$. Hence we can eliminate the 
condition $\mu_i\le -2k$ in (\ref{Trans5.5}), 
and obtain the formula, 
\begin{align}\notag
L_{n, \beta}^{eu}=&
\sum_{\begin{subarray}{c}l\ge 1, \ 1\le e\le l, \ 
\beta_i \in C(X) \text{ for }i\neq e, 
\ \beta_e \in \overline{C}(X), \ n_i \in \mathbb{Z},  \\
 \beta_1 +\cdots +\beta_l=\beta, \
n_1 +\cdots +n_l=n, \ \mu_i=n_i/\beta_i \omega \text{ satisfy }\\ 
 0<\mu_1 \le \mu_2 \le \cdots \le  \mu_{e-1}, \ 
 0< \mu_l \le \mu_{l-1} \le \cdots \le \mu_{e+1}
\end{subarray}}
\sum_{\begin{subarray}{c}1\le m\le l, \ 
\text{surjective }\psi \colon \{1, \cdots, l\}
 \to \{1, \cdots, m\}, \\
 i\le j \text{ implies }\psi(i)\le \psi(j), \text{ and satisfies }
 (\ref{Usat})\end{subarray}} \\
&\qquad \label{Trans6.5}
\left(-\frac{1}{2}\right)^{l-1}
\prod_{b=1}^{m}\frac{(-1)^{\psi(e)-e}}{\lvert \psi^{-1}(b) \rvert !}
\prod_{i\neq e}n_i 
N_{n_i, \beta_i}^{eu}P_{n_e, \beta_e}^{eu}.
\end{align}
We
rearrange the sum (\ref{Trans6.5})
by first choosing partitions $0=m_0<m_1<\cdots <m_l=e-1$, 
$e=m_0'<m_1'<\cdots <m_s'=l$ and then choosing $\beta_i$, $n_i$
so that $0<\mu_{1}=\cdots =\mu_{m_1}<\mu_{m_1+1}=\cdots$
and $0<\mu_{l}=\cdots =\mu_{m_{s-1}'+1}<\mu_{m_{s-1}'}=\cdots$
are satisfied. Noting that $\psi$ satisfies (\ref{Usat}), we obtain 
\begin{align}\notag
&L_{n, \beta}^{eu}=
\sum_{\begin{subarray}{c}l\ge 1, \ 1\le e\le l, \ 0\le t \le e-1, \ 
0\le s \le l-e, \\
0=m_0<m_1< \cdots <m_t=e-1, \\
e=m_0'<m_1'<\cdots <m_s'=l \end{subarray}}
\sum_{\begin{subarray}{c} \beta_i \in C(X) \text{ for }i\neq e, \ 
\beta_e \in \overline{C}(X), \ n_i \in \mathbb{Z}, \\
\beta_1+\cdots +\beta_l=\beta, \
n_1+\cdots +n_l=n, \ \mu_i=n_i/\omega \beta_i \\
\text{satisfy }0<\mu_1=\cdots =\mu_{m_1}<\mu_{m_1+1}= \cdots, \\
0<\mu_{l}=\cdots 
=\mu_{m'_{s-1}+1}<\mu_{m'_{s-1}}=\cdots.
\end{subarray}}\left( -\frac{1}{2}\right)^{l-1} \\
\label{Trans6'}
& 
\prod_{i=1}^{t}\left(
\sum_{\begin{subarray}{c}l_i=m_i-m_{i-1}, \ 1\le m\le l_i, \\ 
\text{surjective } \psi \colon \{1, \cdots, l_i \}
 \to \{1, \cdots, m\}, \\
 i\le j \text{ implies }\psi(i)\le \psi(j)
 \end{subarray}} 
\prod_{b=1}^{m}\frac{(-1)^{l_i-m}}{\lvert \psi^{-1}(b)\rvert !} \right)
\prod_{i=1}^{s}\frac{1}{(m_i'-m_{i-1}')!}\prod_{i\neq e}n_i
N_{n_i, \beta_i}^{eu}P_{n_e, \beta_e}^{eu}.
\end{align}

Then (\ref{Trans6}) follows from Lemma~\ref{element} below.
\end{proof}
\begin{lem}\label{element}
For a fixed $l$, we have \emph{ 
\begin{align}
\sum_{\begin{subarray}{c} 1\le l\le m, \ 
\text{surjective }\psi \colon \{1, \cdots, l\}
 \to \{1, \cdots, m\}, \\
 i\le j \text{ implies }\psi(i)\le \psi(j)
 \end{subarray}}
 \prod_{b=1}^{m}\frac{(-1)^{l-m}}{\lvert \psi^{-1}(b)\rvert !}
=\frac{1}{l!}.
\end{align}}
\end{lem}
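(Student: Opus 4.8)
The plan is to prove the identity by a routine exponential-generating-function computation, after reinterpreting the order-preserving surjections as compositions of $l$. Concretely, to give an order-preserving surjection $\psi\colon\{1,\dots,l\}\to\{1,\dots,m\}$ is the same as to give an ordered tuple of positive integers $(a_1,\dots,a_m)$ with $a_1+\cdots+a_m=l$, the correspondence being $a_b=\lvert\psi^{-1}(b)\rvert$; in particular such a $\psi$ exists only for $1\le m\le l$, and the factor $(-1)^{l-m}$ appearing in the product over $b$ is an overall sign. Hence, working in the ring of formal power series $\mathbb{Q}[[x]]$ and writing $[x^l](\,\cdot\,)$ for the coefficient of $x^l$, the left-hand side of the asserted identity equals
\begin{align*}
\sum_{m=1}^{l}(-1)^{l-m}\sum_{\begin{subarray}{c}a_1+\cdots+a_m=l\\ a_i\ge 1\end{subarray}}\prod_{b=1}^{m}\frac{1}{a_b!}
=\sum_{m=1}^{l}(-1)^{l-m}\,[x^l]\,(e^{x}-1)^{m},
\end{align*}
where we have used $e^{x}-1=\sum_{a\ge 1}x^{a}/a!$, so that expanding the $m$-th power shows $[x^l](e^{x}-1)^{m}$ is exactly the inner sum.

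Next I would enlarge the range of $m$ to all $m\ge 0$ without changing the value of the sum: for $m>l$ the series $(e^{x}-1)^{m}$ is divisible by $x^{m}$, so $[x^l](e^{x}-1)^{m}=0$, and for $m=0$ we have $[x^l](e^{x}-1)^{0}=[x^l]\,1=0$ since $l\ge 1$. Pulling out the sign and using $(-1)^{l-m}(e^x-1)^m=(-1)^l(-(e^x-1))^m$, the left-hand side therefore equals
\begin{align*}
(-1)^{l}\sum_{m\ge 0}[x^l]\bigl(-(e^{x}-1)\bigr)^{m}
=(-1)^{l}\,[x^l]\sum_{m\ge 0}\bigl(-(e^{x}-1)\bigr)^{m}
=(-1)^{l}\,[x^l]\,\frac{1}{1+(e^{x}-1)}
=(-1)^{l}\,[x^l]\,e^{-x}.
\end{align*}
The interchange of coefficient extraction with the summation is legitimate because $e^{x}-1$ has vanishing constant term, so for each fixed $l$ only finitely many $m$ contribute to the coefficient of $x^{l}$ and the geometric series converges in $\mathbb{Q}[[x]]$. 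Since $[x^l]e^{-x}=(-1)^{l}/l!$, the left-hand side equals $(-1)^{l}\cdot(-1)^{l}/l!=1/l!$, as required.

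I do not expect a genuine obstacle here: the statement is elementary combinatorics. The only two points that deserve a line of care are the bijection between order-preserving surjections and compositions (so that the products over $b$ become the coefficient extraction from $(e^{x}-1)^{m}$) and the formal manipulation of the geometric series in $\mathbb{Q}[[x]]$, both of which are standard. If one prefers to avoid power series entirely, the same identity can instead be proved by induction on $l$, splitting off the block $\psi^{-1}(1)$ of size $a_1$ and recognizing the resulting recursion; but the generating-function argument above is shorter.
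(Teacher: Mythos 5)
Your proof is correct and complete. Note, however, that the paper does not actually prove this lemma: its ``proof'' consists of the single remark that the identity is elementary and is a special case of \cite[Proposition~4.9]{Joy4}. Your argument is therefore a genuinely self-contained substitute for that citation. The translation of order-preserving surjections into compositions $(a_1,\dots,a_m)$ of $l$, the identification of the inner sum with $[x^l](e^x-1)^m$, and the formal geometric series $\sum_{m\ge 0}\bigl(-(e^x-1)\bigr)^m=e^{-x}$ are all sound, and your justification for extending the range of $m$ and for interchanging coefficient extraction with the summation (only finitely many $m$ contribute in each degree, since $e^x-1$ has no constant term) is exactly the point that needs saying. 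Two small editorial remarks. First, the subscript of the sum in the statement reads ``$1\le l\le m$'', which must be ``$1\le m\le l$'' (no surjection $\{1,\dots,l\}\to\{1,\dots,m\}$ exists otherwise); you silently adopt the correct reading. Second, as printed the factor $(-1)^{l-m}$ sits inside the product over $b$, so a literal reading would yield $(-1)^{m(l-m)}$; you correctly treat it as a single overall sign, and this reading is forced by the identity itself (for $l=3$ the literal reading gives $\tfrac16+1+1\neq\tfrac16$) and matches how the corresponding sign is placed in the paper's formula (\ref{U}).
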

\begin{proof}
The proof is elementary, 
and this is a special case of~\cite[Proposition~4.9]{Joy4}. 
\end{proof}

\subsection{Proof of Theorem~\ref{thm:main2}}\label{sub:fin}
We finally give a proof of Theorem~\ref{thm:main2}. 
\begin{proof}
For a fixed data $l\ge 1$, $1\le e\le l$, $\beta_i\in C(X)$ ($i\neq e$)
 and 
$\beta_e \in \overline{C}(X)$, we set  
\begin{align}\notag
F_e(\beta_1, \cdots, \beta_l) &=\sum_{n\in \mathbb{Z}}
\sum_{\begin{subarray}{l} 0\le t\le e-1, \ 0\le s \le l-e, \\
0=m_0<m_1< \cdots <m_t=e-1, \\
e=m_0'<m_1'<\cdots <m_s'=l.
\end{subarray}}
\sum_{\begin{subarray}{c}n_i \in \mathbb{Z}, \ 
n_1+\cdots +n_l=n, \ \mu_i = n_i/\omega \beta_i \text{ satisfy} \\
0<\mu_1=\cdots =\mu_{m_1}<\mu_{m_1+1}= \cdots, \\
 0<\mu_{l}=\cdots
=\mu_{m'_{s-1}+1}<\mu_{m'_{s-1}}=\cdots.
\end{subarray}}\\
\label{Trans7}
& \prod_{i=1}^{t}\frac{1}{(m_{i}-m_{i-1})!}
\prod_{i=1}^{s}\frac{1}{(m_i'-m_{i-1}')!}\prod_{i\neq e}n_i
N_{n_i, \beta_i}^{eu}P_{n_e, \beta_e}^{eu}q^{n}.
\end{align}
By the formula (\ref{Trans6}), we have 
\begin{align}\notag
&\sum_{n, \beta}L_{n, \beta}q^n v^{\beta} \\
&= \notag
\sum_{l\ge 1, 1\le e \le l}\left(-\frac{1}{2} \right)^{l-1}
\sum_{\begin{subarray}{c} \beta_i \in C(X) \text{ for }i\neq e, \ 
\beta_e \in \overline{C}(X), \\
\beta_1+\cdots +\beta_l=\beta \end{subarray}}
F_e(\beta_1, \cdots, \beta_{e-1}, 
 \beta_e, \beta_{e+1}, \cdots,  \beta_l) \\
 \notag
 &= 
 \sum_{l\ge 1, 1\le e\le l}\sum_{\begin{subarray}{c}
 \kappa_1 \colon I_1 \to C(X), \ 
 \kappa_2 \colon I_2 \to C(X), \ 
 \beta_e \in \overline{C}(X), \\
 \sum_{i\in I_1}\kappa_1(i) +\sum_{i\in I_2}\kappa_2(i) +\beta_e =\beta
  \end{subarray}} 
 \frac{1}{(e-1)!(l-e)!} \left( -\frac{1}{2}\right)^{l-1} v^{\beta}\\
 &\label{lastsum}
 \sum_{\begin{subarray}{l} \lambda_1 \colon \{1, \cdots, e-1\} \stackrel{\sim}
 {\to} I_1, \\
  \lambda_2 \colon \{e+1, \cdots, l\} \stackrel{\sim}
 {\to} I_2 \end{subarray}}F_e(\kappa_1\lambda_1(1), \cdots, 
 \kappa_1\lambda_1(e-1), \beta_e, 
  \kappa_2\lambda_{2}(e+1), \cdots, 
 \kappa_2\lambda_{2}(l)).
\end{align}
Here 
$I_1$ and $I_2$ are finite sets with $\lvert I_1 \rvert =e-1$, 
$\lvert I_2 \rvert =l-e$.
Let us fix data 
$l\ge 1$, $1\le e\le l$, 
$\kappa_1 \colon I_1 \to C(X)$, $\kappa_2 \colon I_2 \to 
C(X)$ and $\beta_e \in \overline{C}(X)$, and consider the last sum 
of (\ref{lastsum}).  
If we also fix bijections 
$\lambda_1'\colon \{1, \cdots, e-1\} \to I_1$ and 
$\lambda_2' \colon \{e+1, \cdots, l\} \to I_2$, 
then the choices of $\lambda_1$, $\lambda_2$ 
in (\ref{lastsum})
correspond to 
the elements of the symmetric groups 
$\gamma \in \mathfrak{S}_{e-1}$, $\gamma' \in \mathfrak{S}_{l-e}$ 
respectively. 
Let us rewrite $\beta_i=\kappa_1 \lambda_1'(i)$ for $1\le i \le e-1$
and $\beta_i=\kappa_2 \lambda_2'(i)$ for $e+1 \le i \le l$. 
Then we have 
\begin{align}\notag
& \sum_{\begin{subarray}{l} \lambda_1 \colon \{1, \cdots, e-1\} \stackrel{\sim}
 {\to} I_1, \\
  \lambda_2 \colon \{e+1, \cdots, l\} \stackrel{\sim}
 {\to} I_2 \end{subarray}}
 F_e(\kappa_1\lambda_1(1), \cdots, 
 \kappa_1\lambda_1(e-1), \beta_e, 
  \kappa_2\lambda_{2}(e+1), \cdots, 
 \kappa_2\lambda_{2}(l))
 \\ 
  &= \sum_{\begin{subarray}{c} \gamma \in \mathfrak{S}_{e-1} \\
 \notag
 \gamma' \in \mathfrak{S}_{l-e} \end{subarray}}F_e(\beta_{\gamma(1)}, 
 \cdots, \beta_{\gamma(e-1)}, \beta_e, \beta_{\gamma'(e+1)}, 
 \cdots, \beta_{\gamma'(l)}) \\
 \label{Trans9} &=\sum_{\begin{subarray}{c}
 0\le t\le e-1, \ 0\le s\le l-e, \\
 0=m_0<m_1<\cdots <m_t=e-1 \\
 e=m_0'<m_1' \cdots <m_s'=l 
 \end{subarray}}
 \sum_{\begin{subarray}{c}
 \gamma \in \mathfrak{S}_{e-1} \\
 \gamma' \in \mathfrak{S}_{l-e}
 \end{subarray}}
 \prod_{i=1}^{t}\frac{1}{(m_i-m_{i-1})!} \prod_{i=1}^{s}
\frac{1}{(m_i'-m_{i-1}')!}G_{\gamma, \gamma'}, 
\end{align}
where $G_{\gamma, \gamma'}$ is given by
\begin{align}\notag
G_{\gamma, \gamma'}&=\sum_{n\in \mathbb{Z}} 
 \sum_{\begin{subarray}{c}n_i \in \mathbb{Z}, \
n_1+\cdots +n_l=n, \ \mu_i=n_i/\omega \beta_i \text{ satisfy} \\
0<\mu_{\gamma(1)}=\cdots =\mu_{\gamma(m_1)}<\mu_{\gamma(m_1+1)}= \cdots, \\
 0<\mu_{\gamma'(l)}=\cdots 
=\mu_{\gamma'(m'_{s-1}+1)}<\mu_{\gamma'(m'_{s-1})}=\cdots
\end{subarray}} 
 \prod_{i\neq e}n_i N_{n_i, \beta_i}^{eu}
 P_{n_e, \beta_e}^{eu}q^n.
 \end{align}
Note that for $\gamma^{\sharp}\in \prod_{i=1}^{t}\mathfrak{S}_{m_i-m_{i-1}}
\subset \mathfrak{S}_{e-1}$ 
and $\gamma^{\flat}\in \prod_{i=1}^{s}\mathfrak{S}_{m_i'-m_{i-1}'}
\subset\mathfrak{S}_{l-e}$, 
we have 
$$G_{\gamma, \gamma'}=G_{\gamma \gamma^{\sharp}, \gamma' \gamma^{\flat}}.$$
Since we have 
$$\prod_{i=1}^{t}
\lvert \mathfrak{S}_{m_i-m_{i-1}}\rvert
=\prod_{i=1}^{t}(m_i-m_{i-1})!, \quad
\prod_{i=1}^{s}\lvert \mathfrak{S}_{m_i'-m_{i-1}'}\rvert=
\prod_{i=1}^{s}(m_i'-m_{i-1}')!, $$
 (\ref{Trans9}) is written as 
 \begin{align} \label{writeG}
 (\ref{Trans9}) &=\sum_{\begin{subarray}{c}
 0\le t\le e-1, \ 0\le s\le l-e, \\
 0=m_0<m_1<\cdots <m_t=e-1, \\
 e=m_0'<m_1'< \cdots <m_s'=l 
 \end{subarray}}
 \sum_{\begin{subarray}{l} \gamma \in \mathfrak{S}_{e-1}, 
 \gamma(i)<\gamma(i') \text{ if }i, i'\in [m_{j}+1, m_{j+1}]
 \text{ for some }j \\
  \gamma' \in \mathfrak{S}_{l-e},
 \gamma'(i)<\gamma'(i') \text{ if }i, i'\in [m_{j}'+1, m_{j'+1}]
 \text{ for some }j 
 \end{subarray}} G_{\gamma, \gamma'} 
 \end{align}
 On the other hand, if we are given $n_i\in \mathbb{Z}_{>0}$ for $i\neq e$
 and $n_e \in \mathbb{Z}$ with $n_1+\cdots +n_l=n$, there 
 are unique $\gamma \in \mathfrak{G}_{e-1}$, $\gamma' \in \mathfrak{G}_{l-e}$
 and partitions $0=m_0<m_1< \cdots <m_t=e-1$, $e=m_0'<m_1'<\cdots <m_s'=l$
 such that 
 $\gamma(i)<\gamma(i')$ for $i, i' \in [m_{j}+1, m_{j+1}]$, 
 $\gamma'(i)<\gamma(i')$ for $i, i' \in [m_{j}'+1, m_{j+1}']$, and 
 $\mu_i=n_i/\omega \beta_i$ satisfy 
 \begin{align*}
 & 0<\mu_{\gamma(1)}=\cdots =\mu_{\gamma(m_1)}<\mu_{\gamma(m_1+1)}=\cdots, \\
 &  0<\mu_{\gamma'(l)}=\cdots 
=\mu_{\gamma'(m'_{s-1}+1)}<\mu_{\gamma'(m'_{s-1})}=\cdots.
\end{align*}
Therefore (\ref{writeG}) is written as 
\begin{align}\notag
 (\ref{writeG})&
 =\sum_{n\in \mathbb{Z}}\sum_{\begin{subarray}{c}n_1+\cdots +n_l=n, \\
 n_i \in \mathbb{Z}_{>0} \text{ for }i\neq e
 \end{subarray}}
 \prod_{i\neq e}n_iN_{n_i, \beta_i}^{eu}P_{n_e, \beta_e}^{eu}
 q^n \\
 \label{Trans10}
 &= \prod_{i\neq e}N_{\beta_i}^{eu}(q)\cdot P_{\beta_e}^{eu}(q).
 \end{align}
 Noting 
 $$\sum_{1\le e\le l} \frac{1}{(e-1)!(l-e)! 2^{l-1}}=\frac{1}{(l-1)!}, $$
 we obtain 
\begin{align}\label{logex}
(\ref{lastsum}) &= \sum_{l\ge 1} \sum_{\begin{subarray}{c}
\beta_i \in C(X) \text{ for }
i\neq l, \ \beta_l \in \overline{C}(X) \\
\beta_1 + \cdots +\beta_l=\beta
\end{subarray}}
\frac{(-1)^{l-1}}{(l-1)!}
\prod_{i\neq l}N_{\beta_i}^{eu}(q)\cdot P_{\beta_l}^{eu}(q)v^{\beta}.
\end{align}
The formula (\ref{logex}) implies (\ref{expan}) as desired. 
\end{proof}

\subsection{Problem of incorporating virtual classes to 
Joyce's work} 
Since invariants defined in Definition~\ref{def:inv}
are interpreted as Euler characteristics of moduli stacks, 
they are unlikely to be unchanged under deformations of $X$. 
In order to construct invariants which are unchanged under
deformations, we need to construct virtual moduli cycles
on the moduli spaces and integrate them. The resulting invariants are
Euler characteristics of the moduli spaces (up to sign) if the moduli 
spaces are non-singular, but in general 
they differ from euler numbers. Thus
in order to solve Conjecture~\ref{rat}, we have to
construct invariants involving virtual classes
and establish the formulas like (\ref{Trans2}). 
At this moment we are unable to overcome this problem. 
However if we could 
 involve virtual classes with 
Joyce's theory, then Conjecture~\ref{rat} for PT-theory 
follows along with the same argument in this paper.  
To state this, let us recall that the integrations of
virtual classes are also realized as weighted sums of 
certain constructible functions introduced by Behrend~\cite{Beh}. 
He shows that, for any scheme $M$, 
there is
a canonical constructible function $\chi_{M} \colon M \to \mathbb{Z}$
such that $\chi_{M}=(-1)^{\dim M}$ if $M$ is non-singular, and 
 if $M$ carries a symmetric perfect obstruction theory, 
we have 
$$\sharp^{\rm{vir}}M=\sum_{n\in \mathbb{Z}}ne(\chi_{M}^{-1}(n)).$$
Under the situation in this section, 
we shall address the following question.
\begin{footnote}
{The formulation of Problem~\ref{prob} is taught to the author
 by D.~Joyce. }
\end{footnote} 
\begin{problem}\label{prob}
Does there exist a map
$$\Theta' \colon \mathfrak{G}(\aA^p) \lr \mathfrak{g}(\aA^p), $$
such that 
the following conditions hold?
\begin{itemize}
\item 
For $v\in C(\aA^p)$,
suppose that $\mathfrak{M}^{v}(Z_{\mu_{\sigma}})$ is written 
as $[M/\mathbb{G}_m]$ for a scheme $M$. Then
$$\Theta'(\epsilon^{v}(Z_{\mu_{\sigma}}))=\sum_{n\in \mathbb{Z}}
n\Theta([[\chi_M^{-1}(n)/\mathbb{G}_m]
\hookrightarrow \mathfrak{Obj}(\aA^p)]).$$
\item 
For $v_1, v_2 \in C(\aA^p)$,
 we have 
\begin{align}\label{prob:nu}
[\Theta'(\epsilon^{v_1}(Z)), \Theta'(\epsilon^{v_2}(Z))]
=(-1)^{\chi(v_1, v_2)}\Theta'[\epsilon^{v_1}(Z), \epsilon^{v_2}(Z)].
\end{align}
\end{itemize}
\end{problem}
There should be sign change in (\ref{prob:nu}), because 
$\chi_M=(-1)^{\dim M}$
on a smooth variety $M$. 
We are unable to solve Problem~\ref{prob} at this moment, 
but
the techniques given in this paper yield
 the following. 
\begin{thm}
Suppose that Problem~\ref{prob} is true. Then Conjecture~\ref{rat}
is true for PT-theory. 
\end{thm}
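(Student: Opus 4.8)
The plan is to run the argument of this section once more, replacing the weighted Euler characteristic $\Theta$ everywhere by the hypothetical map $\Theta'$ of Problem~\ref{prob}, and then to keep track of the signs $(-1)^{\chi(v_i,v_j)}$ which this substitution introduces. Write $\Theta'_v$ for the $c_v$-component of $\Theta'\colon \mathfrak{G}(\aA^p)\to \mathfrak{g}(\aA^p)$, and define, for $\sigma=k\omega+i\omega$, the \emph{virtual} invariants $L_{n,\beta}(\sigma)\cneq \Theta'_v\,\Xi\,\epsilon^{v}(Z_{\mu_{\sigma}})$ with $\ch(v)=(-1,0,\beta,n)$ and $N_{n,\beta}(\sigma)\cneq \Theta'_{v'}\,\epsilon^{v'}(Z_{\mu_{\sigma}})$ with $\ch(v')=(0,0,\beta,n)$, setting $L_{n,\beta}\cneq L_{n,\beta}(i\omega)$ and $N_{n,\beta}\cneq N_{n,\beta}(i\omega)$. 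As in Remark~\ref{rmk:invLP}, $N_{n,\beta}(\sigma)=N_{n,\beta}$ for all $\sigma=k\omega+i\omega$, and combining Theorem~\ref{prop:stack} (which gives $\mathfrak{L}^{\mu_{\sigma}}_{n}(X,\beta)\cong [P_n(X,\beta)/\mathbb{G}_m]$ for $k\ll 0$), the symmetric obstruction theory on $P_n(X,\beta)$ of~\cite{PT}, Behrend's identity $\sharp^{\rm vir}M=\sum_m m\,e(\chi_M^{-1}(m))$~\cite{Beh}, Remark~\ref{rmk:eu}, and the first bullet of Problem~\ref{prob}, one checks that $L_{n,\beta}(k\omega+i\omega)=P_{n,\beta}$ whenever $k<-\mu_{n,\beta}/2$.

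Next I would establish the rationality of the auxiliary generating series. The proof of Lemma~\ref{lem:sym} carries over: the dualizing functor $\mathbb{D}$ and the equivalence $\otimes\lL$ induce isomorphisms of the relevant moduli stacks, and since the Behrend function of a scheme or stack depends only on its isomorphism type, $\mathbb{D}$ and $\otimes\lL$ are compatible with $\Theta'$; hence $L_{n,\beta}=L_{-n,\beta}$, $N_{n,\beta}=N_{-n,\beta}$ and $N_{n+d,\beta}=N_{n,\beta}$ with $d=\omega\cdot\beta$. The proof of Lemma~\ref{lem:finite} uses only Theorem~\ref{prop:stack} together with this symmetry, so $\{n\mid L_{n,\beta}(\sigma)\neq 0\}$ is finite; thus $L_{\beta}(q)\cneq\sum_n L_{n,\beta}q^n$ is a Laurent polynomial invariant under $q\leftrightarrow 1/q$ (as in Lemma~\ref{cor:L}), and $N_{\beta}(q)\cneq\sum_{n\ge 0}nN_{n,\beta}q^n$ is the Laurent expansion of a rational function of $q$ invariant under $q\leftrightarrow 1/q$ (as in Lemma~\ref{lem:N}).

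The core step is the product formula. Applying $\Xi$ and then $\Theta'$ to~(\ref{form2}), rewriting~(\ref{TransU}) as a $\mathbb{Q}$-linear combination of iterated commutators and using the second bullet of Problem~\ref{prob}, one repeats the passage from~(\ref{TransU}) to~(\ref{Trans2}): only the star-shaped graphs $\Gamma$ centred at the unique vertex $e$ with $\ch_0(v_e)=-1$ contribute, and Riemann--Roch gives $\chi(v_i,v_e)=n_i$, $\chi(v_e,v_i)=-n_i$ on the edges. Hence, relative to the Euler-characteristic computation, the substitution of $\Theta'$ for $\Theta$ multiplies each term of~(\ref{Trans5.5}) by $\prod_{i\neq e}(-1)^{n_i}=(-1)^{n-n_e}$. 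Passing to the renormalized invariants $\widehat{P}_{n,\beta}\cneq(-1)^n P_{n,\beta}$, $\widehat{L}_{n,\beta}\cneq(-1)^n L_{n,\beta}$, $\widehat{N}_{n,\beta}\cneq N_{n,\beta}$ absorbs this sign exactly: one finds that $\widehat{L},\widehat{N},\widehat{P}$ satisfy the formula~(\ref{Trans6}) verbatim, so the computation of subsection~\ref{sub:fin} goes through word for word and yields
$$\sum_{\beta}\widehat{P}_{\beta}(q)v^{\beta}=\left(\sum_{\beta}\widehat{L}_{\beta}(q)v^{\beta}\right)\exp\left(\sum_{\beta}\widehat{N}_{\beta}(q)v^{\beta}\right),$$
where $\widehat{P}_{\beta}(q)=P_{\beta}(-q)$, $\widehat{L}_{\beta}(q)=L_{\beta}(-q)$, $\widehat{N}_{\beta}(q)=N_{\beta}(q)$. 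Extracting the coefficient of $v^{\beta}$ — a finite sum over the finite set of decompositions of $\beta$ inside $\overline{C}_{\le \beta}(X)$ (cf.~(\ref{NE})) — expresses $\widehat{P}_{\beta}(q)$ as a polynomial in Laurent polynomials and rational functions of $q$ invariant under $q\leftrightarrow 1/q$; hence $P_{\beta}(q)=\widehat{P}_{\beta}(-q)$ is the Laurent expansion of a rational function of $q$, and its invariance under $q\leftrightarrow 1/q$ is equivalent to $P_{n,\beta}=P_{-n,\beta}$, which is the symmetry established above. Since $P_{n,\beta}=0$ for $n<N(\beta)$ by~(\ref{Lau}), this is precisely Conjecture~\ref{rat} for PT-theory.

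The main obstacle is entirely sign bookkeeping: one must verify carefully that the sign $(-1)^{\chi(v_i,v_j)}$ of Problem~\ref{prob} propagates through the multiple-commutator expansion and the star-graph combinatorics of Proposition~\ref{Coe} to produce precisely the uniform factor $(-1)^{n-n_e}$, and that the single renormalization $P_{n,\beta}\mapsto(-1)^nP_{n,\beta}$ is simultaneously consistent with the $L$-side, the $N$-side, and the symmetry statements. A secondary point requiring care is the compatibility of $\mathbb{D}$ and $\otimes\lL$ with $\Theta'$, but this follows from the fact that the Behrend function depends only on the isomorphism type of the moduli stack. Everything else is a verbatim repetition of the arguments of Section~\ref{sec:Joy} and the preceding subsections.
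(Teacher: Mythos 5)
Your proposal follows essentially the same route as the paper: the paper's own proof is a two-line sketch (define the virtual invariants via $\Theta'$, note that the wall-crossing formula and the proof of Theorem~\ref{thm:main2} carry over), and you correctly fill in the sign bookkeeping that the paper leaves implicit — since $\chi(v_i,v_j)=0$ for $i,j\neq e$ and $\chi(v_i,v_e)=n_i$, the total sign from (\ref{prob:nu}) on a star-shaped term is indeed $(-1)^{n-n_e}$ and is absorbed by $P_{n,\beta}\mapsto(-1)^nP_{n,\beta}$. Two minor slips, neither affecting the conclusion: the paper records $L_{n,\beta}(\sigma)=(-1)^{\dim\Pic(X)}P_{n,\beta}$ (the restriction of the Behrend function to the fiber of $\det$ over $[\oO_X]$ costs a global constant you drop), and your closing claim that invariance under $q\leftrightarrow 1/q$ is ``equivalent to $P_{n,\beta}=P_{-n,\beta}$'' is false for one-sided Laurent expansions — but it is also unnecessary, since the invariance already follows from the product formula together with the invariance of $L_{\beta}(q)$ and $N_{\beta}(q)$.
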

\begin{proof}
It is enough to work over the invariants, defined by $\Theta'$. 
As a modification of Definition~\ref{def:inv}, 
 let us define $L_{n, \beta}(\sigma)$, 
$N_{n, \beta}(\sigma)$ to be
\begin{align*}
L_{n, \beta}(\sigma)&\cneq \Theta' \Xi \epsilon^v(Z_{\mu_{\sigma}}),
 \quad \text{ where }\ch(v) =(-1, 0, \beta, n), \\
N_{n, \beta}(\sigma)&\cneq \Theta' \epsilon^v(Z_{\mu_{\sigma}}),
 \quad \text{ where }\ch(v) =(0, 0, \beta, n).
 \end{align*}
 Then (\ref{prob:nu}) yields a similar wall-crossing formula 
 for $L_{n, \beta}(\sigma)$, and 
 $$L_{n, \beta}(\sigma)=(-1)^{\dim \Pic(X)}P_{n, \beta},$$
  for 
 $\sigma=k\omega+i\omega$ with $k <-\mu_{n, \beta}/2$. 
 Therefore the same proof of Theorem~\ref{thm:main2} works, and
 we have the similar expansion of the generating series $Z_{\rm{PT}}$
 as in (\ref{expan}). Then  
 Conjecture~\ref{rat} for $P_{\beta}(q)$ follows as a corollary.  
\end{proof}

Yukinobu Toda

Institute for the Physics and 
Mathematics of the Universe (IPMU), University of Tokyo, 

Kashiwano-ha 5-1-5, Kashiwa City, Chiba 277-8582, Japan

\textit{E-mail address}:toda@ms.u-tokyo.ac.jp, toda-914@pj9.so-net.ne.jp

\end{document}